\theoremstyle{plain}
\newtheorem{theorem}{Theorem}[section]
\newtheorem{lemma}[theorem]{Lemma}
\newtheorem{coro}[theorem]{Corollary}
\theoremstyle{definition}
\newtheorem{example}[theorem]{Example}
\theoremstyle{remark}
\newtheorem{remark}[theorem]{Remark}
\numberwithin{equation}{section}
\newcommand{\abs}[1]{\lvert#1\rvert}
\newcommand{\labs}[1]{\left\lvert\,#1\,\right\rvert}
\newcommand{\Lr}[1]{\left(#1\right)}
\newcommand{\lr}[1]{\Bigl(#1\Bigr)}
\newcommand{\set}[2]{\left\{\,#1\,\mid\,#2\,\right\}}
\newcommand{\diff}[2]{\dfrac{\partial #1}{\partial #2}}
\newcommand{\nm}[2]{\|\,#1\,\|_{#2}}
\newcommand{\jump}[1]{[\![#1]\!]}
\newcommand{\wnm}[1]{|\!|\!|#1|\!|\!|_{\iota,h}}
\newcommand{\inm}[1]{\|#1\|_{\iota}}
\newcommand{\mc}[1]{\mathcal{#1}}
\newcommand{\mb}[1]{\mathbb{#1}}
\newcommand{\wh}[1]{\widehat{#1}}
\newcommand{\wt}[1]{\widetilde{#1}}
\def\al{\alpha}
\def\del{\delta}
\def\na{\nabla}
\def\eps{\epsilon}
\def\pa{\partial}
\def\Om{\Omega}
\def\lam{\lambda}
\def\x{\times}
\def\mr{\mathrm}
\def\md{\mathrm{d}}
\def\C{\mathbb C}
\def\D{\mathbb D}
\def\T{\mc{T}}
\def\dx{\,\mathrm{d}\boldsymbol x}
\def\dsx{\,\md\sigma(\boldsymbol x)}
\DeclareMathOperator{\divop}{div}
\def\negint{{\int\negthickspace\negthickspace\negthickspace
\negthinspace -}}
\newcommand{\nn}{\nonumber}
\begin{document}
\title[Taylor-Hood like FEMs for nearly incompressible SGE problems]{Taylor-Hood like finite elements for nearly incompressible strain gradient elasticity problems}
\author[Y. L. Liao\and P. B. Ming]{Yulei Liao\and Pingbing Ming}
\address{LSEC, Institute of Computational Mathematics and Scientific/Engineering Computing, AMSS, Chinese Academy of Sciences, No. 55, East Road Zhong-Guan-Cun, Beijing 100190, China}
\address{School of Mathematical Sciences, University of Chinese Academy of Sciences, Beijing 100049, China}
\email{liaoyulei@lsec.cc.ac.cn, mpb@lsec.cc.ac.cn}

\author[Y. Xu]{Yun Xu}
\address{Laboratory of Computational Physics, Institute of Applied Physics and Computational Mathematics, Beijing 100088, China}
\email{xu\_yun@iapcm.ac.cn}
\thanks{The work of Liao and Ming were supported by National Natural Science Foundation of China through Grant No. 11971467. The work of Xu was supported by National Natural Science Foundation of China through Grant No. 11772067.}
\begin{abstract}

We propose a family of mixed finite elements that are robust for the nearly incompressible strain gradient model, which is a fourth-order singular perturbed elliptic system. The element is similar to [C. Taylor and P. Hood, {\it Comput. \& Fluids}, {\bf 1}(1973), 73-100] in the Stokes flow. Using a uniform discrete B-B inequality for the mixed finite element pairs, we show the optimal rate of convergence that is robust in the incompressible limit. By a new regularity result that is uniform in both the materials parameter and the incompressibility, we prove the method converges with $1/2$ order to the solution with strong boundary layer effects. Moreover, we estimate the convergence rate of the numerical solution to the unperturbed second-order elliptic system. Numerical results for both smooth solutions and the solutions with sharp layers confirm the theoretical prediction.
\end{abstract}

\keywords{Mixed finite elements; Nearly incompressible strain gradient elasticity; Uniform error estimate}
\date{\today}
\subjclass[2020]{Primary 65N15, 65N30; Secondary 74K20}
\maketitle
\section{Introduction}
The strain gradient models have drawn great attention recently because they capture the size effect of nano-materials for plasticity~\cite{FleckHutchinson:1997} as well as for mechanical meta-materials~\cite{Khakalo:2020} by incorporating the higher-order strain gradient and the intrinsic material length scale into the constitutive relations. Studies from the perspective of modeling, mechanics and mathematics may date back to 1960s~\cites{Koiter:19641,Mindlin:1964,Hlavacek:19692}, while large-scale simulations are relatively recent~\cites{zervos:20092,Zybell:2012, Rud:2014, Phunpeng:2015}. Different methods such as H$^2$ conforming finite element methods~\cites{zervos:20091, Fisher:2010}, H$^1$ conforming mixed finite element methods~\cites{Aravas:2002,Phunpeng:2015}, nonconforming finite element methods~\cites{LiMingShi:2017,LiaoM:2019, LiMingWang:2021}, discontinuous Galerkin methods~\cite{Engel:2002}, isogeometric analysis~\cites{Klassen:2011,Niiranen:2016}, and meshless methods~\cite{Askes:2002} have been used to simulate the strain gradient elastic models with different complexity, just to mention a few. One of the numerical difficulties is that the number of the materials parameters is too large~\cite{Mindlin:1964}, another is that the materials parameters may cause boundary layer or numerical instability when they tend to certain critical values~\cite{Engel:2002}.

The strain gradient elasticity model proposed by Altan and Aifantis~\cite{Altan:1992} seems the simplest one among them because it contains only one material parameter besides the Lam\'e constants, while it still models the size effect adequately~\cite{Aifantis:1999}. This model is described by the following boundary value problem: 
\begin{equation}\label{eq:sgbvp}
\left\{
\begin{aligned}
(\iota^2\Delta-\boldsymbol I)\Lr{\mu\Delta \boldsymbol u+(\lambda+\mu)\na\divop\boldsymbol u}&=\boldsymbol f\quad&&\text{in\;}\Om,\\
\boldsymbol u=\pa_{\boldsymbol n}\boldsymbol u&=\boldsymbol 0\quad&&\text{on\;}\pa\Om,
\end{aligned}\right.
\end{equation}
where $\Omega\subset\mb{R}^2$ is a smooth domain, $\boldsymbol u:\Omega\to\mb{R}^2$ is the displacement, $\pa_{\boldsymbol n}\boldsymbol u$ is the normal derivative of $\boldsymbol u$, $\lam$ and $\mu$ are the Lam\'e constants, and $\iota$ is the microscopic parameter such that $0<\iota\le 1$, which stands for the intrinsic length scale. Besides modeling the strain gradient elasticity, the system~\eqref{eq:sgbvp} may also be regarded as a vector analog of the fourth-order singular perturbed problem, which usually models a clamped plate problem~\cites{John:1973,Schuss:1976, Semper:1992, Semper:1994, Tai:2001, Brenner:2011}, or arises from a fourth-order perturbation of the fully nonlinear Monge-Amp\`ere equation~\cites{Brenner:2011b, Feng:2014}. System of the form~\eqref{eq:sgbvp} may also come from the linearized model in MEMS~\cite{Laurencot:2017}.

In the present work, we are interested in~\eqref{eq:sgbvp} for the nearly incompressible materials. Such materials are commonly used in industry and a typical example is natural rubber. To the best of our knowledge, the studies on the approximation of incompressible and nearly incompressible strain gradient elasticity have not been sufficiently addressed in the literature, although vast efforts have been devoted to finite element approximation of the incompressible and nearly incompressible elasticity problems; See e.g.,~\cites{Hermann:1965,Vog:1983,Simo:1985,Babuska:1992, Brenner:1992, BraessMing:2005, Auricchio:2013}. In~\cite{FleckHutchinson:1997}*{\S III. C}, the authors studied the incompressible limit of the strain gradient model. Mixed finite elements for the incompressible Fleck-Hutchinson strain gradient model have been designed and tested in~\cite{ShuKingFleck:1999}. A finite element method has been tested for the nearly incompressible strain gradient model in~\cite{Wei:2006}. A mixed finite element, which approximated the displacement with Bogner-Fox-Schmidt element~\cite{BFS:1965} and approximated the pressure with the $9-$node quadrilateral element, was constructed for the coupled stress model in~\cite{Fisher:2011}, and bore certain similarities with problem~\eqref{eq:sgbvp}. Recently, Hu and Tian~\cite{Tian:2021} have proposed several robust elements for the two-dimensional strain gradient model in the framework of reduced integration. Unfortunately, none of the above work proved the robustness of the proposed elements rigorously in the incompressible limit. 

 Following the classical approach dealing with the nearly incompressible elasticity problem~\cite{Hermann:1965}, we introduce an auxiliary variable ``pressure'' $p$ and recast~\eqref{eq:sgbvp} into a displacement-pressure mixed variational problem, i.e., the so-called $(\boldsymbol u,p)-$formulation. We approximate the displacement by augmenting the finite element space in~\cite{GuzmanLeykekhmanNeilan:2012} with certain new bubble functions. The original motivation for the bubble functions is to design the stable finite element pair for the Stokes problem~\cites{Arnold:1984,Bernardi:1985}. The augmented bubble functions help out in dealing with the extra constraints such as the divergence stability in Stokes problem and the high order consistency error~\cites{Tai:2001,GuzmanLeykekhmanNeilan:2012,Zhang:2012}.  Such idea has been exploited by one of the authors to design robust finite elements for the strain gradient elasticity model~\cite{LiMingShi:2017}. Besides, we employ the standard continuous Lagrangian finite element of one order lower than that for the displacement to approximate the pressure. Such a finite element pair is similar to the Taylor-Hood element in the Stokes flows~\cite{Hood:1973} which is $\mb{P}_r-\mb{P}_{r-1}$ scheme and continuous pressure approximation. For both smooth solutions and solutions with strong boundary layer effects, these mixed finite element pairs are robust in the incompressible limit, here the robustness is understood in the sense that the rate of convergence is uniform in both $\iota$ and $\lambda$. The bubble function spaces in approximating the displacement are defined by certain orthogonal constraints, and the explicit representations of these spaces are desired for the sake of implementation. We achieve this with the aid of the Jacobi polynomial. In addition to perspicuous results in view of analytics, such representation lends itself to the construction of the analytical shape functions for the approximating space of the displacement. Though we focus on the two-dimensional problem, the element may be readily extended to the three-dimensional problem. cf., Remark~\ref{remark:3d}.

 By standard mixed finite element theory~\cite{BBF:2013}, a discrete B-B inequality that is uniform in $\iota$ is needed for the well-posedness of the mixed $(\boldsymbol u,p)-$discretization problem. This B-B inequality reduces to the remarkable B-B inequality for the Stokes problem when $\iota$ tends to zero. A natural way to prove the discrete B-B inequality is to construct a uniformly stable Fortin operator~\cites{Fortin:1977,Winther:2013,MardalTaiWinther:2002}, which does not seem easy due to the complication of the constraints. To this end, we construct a quasi-Fortin operator that takes different forms for small $\iota$ as well as large $\iota$. This quasi-Fortin operator is bounded in a weighted energy norm in the corresponding regimes of $\iota$. Besides the discrete B-B inequality, another ingredient in proving the robustness is a new regularity result for the solution of~\eqref{eq:sgbvp} that is uniform in both $\lam$ and $\iota$, which is crucial to prove the convergence rate for the layered solution. The proof combines the method dealing with the nearly incompressible linear elasticity~\cite{Vog:1983} and the regularity estimate for the fourth-order singular perturbed problem~\cites{Tai:2001, LiMingWang:2021}.

The outline of the paper is as follows. In \S 2, we introduce Altan and Aifantis' strain gradient model and its mixed variational formulation. We demonstrate the numerical difficulty caused by large $\lam$, and prove the uniform regularity estimate for problem~\eqref{eq:sgbvp}. In \S 3, we construct a family of nonconforming finite elements, and derive the explicit formulations for the bubble spaces. In \S 4, we use the nonconforming elements proposed in \S 3 together with the continuous Lagrangian finite elements to discretize the mixed variational problem and prove the optimal rate of convergence. In the last section, we report the numerical results, which are consistent with the theoretical prediction.

Throughout this paper, the constant $C$ may differ from line to line, while it is independent of the mesh size $h$, the materials parameter $\iota$ and the Lam\'e constant $\lambda$.
\section{The Mixed Variational Formulation and Regularity Estimates}
First we fix some notations. The space $L^2(\Om)$ of the square-integrable
functions defined on a smooth domain $\Om$ is equipped with the inner product $(\cdot,\cdot)$ and the norm $\nm{\cdot}{L^2(\Om)}$, while $L_0^2(\Omega)$ is the subspace of $L^2(\Omega)$ with mean value zero. Let $H^m(\Om)$ be the standard Sobolev space~\cite{AdamsFournier:2003} with the norm $\nm{\cdot}{H^m(\Om)}$, while $H_0^m(\Omega)$ is the closure in $H^m(\Omega)$ of $C_0^\infty(\Omega)$. We may drop $\Om$ in $\nm{\cdot}{H^m(\Om)}$  when no confusion may occur. For any vector-valued function $\boldsymbol v$, its gradient is a matrix-valued
function with components $(\na \boldsymbol v)_{ij}=\pa_jv_i$, and the symmetric part of $\na\boldsymbol v$ is defined by
\(
\boldsymbol\eps(\boldsymbol v)=(\na\boldsymbol v+[\na\boldsymbol v]^T)/2.
\)
The divergence operator is defined as $\divop\boldsymbol v=\pa_1v_1+\pa_2v_2$.
The Sobolev spaces $[H^m(\Om)]^2$, $[H_0^m(\Om)]^2$ and  $[L^2(\Om)]^2$ of a vector-valued function may be defined
similarly as their scalar counterpart. This rule equally applies to the inner products and the norms. The double inner product between tensors $\boldsymbol A=(A_{ij})_{i,j=1}^2,\boldsymbol B=(B_{ij})_{i,j=1}^2$ equals $\boldsymbol A:\boldsymbol B=\sum_{i,j=1}^2A_{ij}B_{ij}$.

We recast~\eqref{eq:sgbvp} into a variational problem: Find $\boldsymbol u\in V{:}=[H^2_0(\Om)]^2$ such that
\begin{equation}\label{eq:variation}
a(\boldsymbol u,\boldsymbol v)=(\boldsymbol f,\boldsymbol v)\quad\text{for all\quad} \boldsymbol v\in V,
\end{equation}
where
\(
a(\boldsymbol u,\boldsymbol v){:}=(\C\boldsymbol\eps(\boldsymbol u),\boldsymbol\eps(\boldsymbol v))+\iota^2(\D\na\boldsymbol\eps(\boldsymbol u),\na\boldsymbol\eps(\boldsymbol v)),
\)
and the fourth-order tensor $\C$ and the sixth-order tensor $\D$ are defined as
\[
\C_{ijkl}=\lam\del_{ij}\del_{kl}+2\mu\del_{ik}\del_{jl}\quad\text{and}\quad
\D_{ijklmn}=\lam\del_{il}\del_{jk}\del_{mn}+2\mu\del_{il}\del_{jm}\del_{kn},
\]
respectively. Here $\delta_{ij}$ is the Kronecker delta function. The strain gradient $\na\boldsymbol\eps(\boldsymbol v)$ is a third-order tensor that is defined by $(\na\boldsymbol\eps(\boldsymbol v))_{ijk}=\pa_i(\boldsymbol\eps(\boldsymbol v))_{jk}$.

We are interested in the regime when $\lambda\to\infty$, which means that the material is nearly incompressible. Proceeding along the same line that leads to~\cite{LiMingWang:2021}*{Theorem 5}, the tensor product of the  element (NTW) proposed in~\cite{Tai:2001} may be used to approximate~\eqref{eq:sgbvp}, and the error estimate reads as
\[
\|\boldsymbol u-\boldsymbol u_h\|\le C\lam(h^2+\iota h)\nm{\boldsymbol u}{H^3},
\]
where $\|\boldsymbol v\|^2{:}=a(\boldsymbol v,\boldsymbol v)$, and $C$ is independent of the mesh size $h$, and $\iota$ and $\lam$. Therefore, the error bound degenerates when $\lam$ is large, and the NTW element does not seem a good candidate for the nearly incompressible material. The following numerical example confirms this observation.
\begin{example}
Let $\Omega=(0,1)^2$, and $\boldsymbol u=(u_1,u_2)$ with
\[
u_1=-\sin^3(\pi x)\sin(2\pi y)\sin(\pi y),\quad u_2=\sin(2\pi x)\sin(\pi x)\sin^3(\pi y).
\]
It is clear that $\divop\boldsymbol u=0$, hence the material is completely incompressible. The details of the numerical experiment such as the mesh generation, are the same as those in \S~\ref{sec:numerical}. The relative error $\|\boldsymbol u-\boldsymbol u_h\|/\|\boldsymbol u\|$ in Table~\ref{tab:NTW} shows that the rate of convergence deteriorates when $\lam$ is large.
\begin{table}[htbp]\centering\caption{Relative errors and rate of convergence for NTW}~\label{tab:NTW}\begin{tabular}{ccccc}
\hline
$\iota\backslash h$ & 1/8 & 1/16 & 1/32 & 1/64\\
\hline
\multicolumn{5}{c}{$\nu=0.3000,\lambda=0.5769,\mu=0.3846$}\\
\hline
1e+00 & 2.681e-01 & 1.373e-01 & 6.698e-02 & 3.334e-02\\
rate & & 0.97 & 1.04 & 1.01\\
1e-06 & 4.550e-02 & 1.244e-02 & 3.001e-03 & 7.467e-04\\
rate & & 1.87 & 2.05 & 2.01\\
\hline
\multicolumn{5}{c}{$\nu=0.499999,\lambda=\text{1.6667e5}, \mu=0.3333$}\\
\hline
1e+00 & 9.995e-01 & 9.979e-01 & 9.916e-01 & 9.682e-01\\
rate & & 0.00 & 0.01 & 0.03\\
1e-06 & 9.752e-01 & 7.233e-01 & 2.502e-01 & 6.561e-02\\
rate & & 0.43 & 1.53 & 1.93\\
\hline
\end{tabular}\end{table}
\end{example}
\subsection{The mixed variational formulation}
We introduce an auxiliary variable
\(
p=\lam\divop\boldsymbol u,
\)
and $p\in P{:}=L_0^2(\Om)\cap H_0^1(\Om)$.  We write Problem~\eqref{eq:variation} into a mixed variational problem as
\begin{equation}\label{eq:mix}
\left\{
\begin{aligned}
a_{\iota}(\boldsymbol u,\boldsymbol v)+b_{\iota}(\boldsymbol v,p)&=(\boldsymbol f,\boldsymbol v)\qquad&&\text{for all\quad}\boldsymbol v\in V,\\
b_{\iota}(\boldsymbol u,q)-\lam^{-1}c_{\iota}(p,q)&=0\qquad&&\text{for all\quad}q\in P,
\end{aligned}\right.
\end{equation}
where
\[
\begin{aligned}
a_{\iota}(\boldsymbol v,\boldsymbol w){:}&=2\mu\lr{(\boldsymbol\eps(\boldsymbol v),\boldsymbol\eps(\boldsymbol w))+\iota^2(\na\boldsymbol\eps(\boldsymbol v),\na\boldsymbol\eps(\boldsymbol w))},\qquad &&\boldsymbol v,\boldsymbol w\in V,\\
b_{\iota}(\boldsymbol v,q){:}&=(\divop\boldsymbol v, q)+\iota^2(\na\divop\boldsymbol v,\na q),\qquad &&\boldsymbol v\in V, q\in P,\\
c_{\iota}(s,q){:}&=(s,q)+\iota^2(\na s,\na q),\qquad &&s,q\in P.
\end{aligned}
\]
It is convenient to define the weighted norm for all $q\in P$ as
\[
\inm{q}{:}=\nm{q}{L^2}+\iota\nm{\na q}{L^2}.
\]
$\inm{q}$ is a norm over $P$ for any $q\in P$ and any finite $\iota$. By Poincar\'e's inequality, $\inm{\na\boldsymbol v}$ is a norm over $V$ for any $\boldsymbol v\in V$. To study the well-posedness of Problem~\eqref{eq:mix}, we start with the following B-B inequality that is uniform for any $\iota$.
\begin{lemma}\label{lema:div}
For any $q\in P$, there exists $\boldsymbol v\in V$ such that
\begin{equation}\label{eq:estdiv1}
\divop\boldsymbol v=q\qquad\text{and}\qquad\inm{\na\boldsymbol v}\le C\inm{q},
\end{equation}
where $C$ only depends on $\Om$ but is independent of $\iota$.
\end{lemma}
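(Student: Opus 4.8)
The plan is to reduce the weighted statement to the classical (unweighted) right-inverse of the divergence on $[H^1_0(\Om)]^2$ and then upgrade the regularity of the constructed field. First, given $q\in P=L^2_0(\Om)\cap H^1_0(\Om)$, I would invoke the classical Bogovskii-type result: there exists $\boldsymbol w\in[H^1_0(\Om)]^2$ with $\divop\boldsymbol w=q$ and $\nm{\na\boldsymbol w}{L^2}\le C\nm{q}{L^2}$, where $C=C(\Om)$. The issue is that this only controls $\inm{\na\boldsymbol w}$ partially — we also need $\iota\nm{\na\na\boldsymbol w}$, i.e.\ a bound on the \emph{second} derivatives of $\boldsymbol w$, and moreover we need $\boldsymbol w\in V=[H^2_0(\Om)]^2$, which the plain Bogovskii field need not satisfy. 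So the Bogovskii construction alone is not enough; the heart of the matter is to produce a field that sits in $H^2_0$ with second derivatives controlled by $\iota^{-1}\nm{q}{L^2}+\nm{\na q}{L^2}$.

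The key step is therefore to exploit that $q$ itself lies in $H^1_0(\Om)$, so that the Bogovskii operator $\mc B$ (which is a classical pseudodifferential operator of order $-1$ on a smooth, say star-shaped-with-respect-to-a-ball, domain, and can be localized via a partition of unity) maps $H^1_0$ boundedly into $[H^2]^2$ with $\nm{\na\na\mc Bq}{L^2}\le C\nm{\na q}{L^2}$, and maps $L^2_0$ boundedly into $[H^1_0]^2$. Setting $\boldsymbol v=\mc Bq$ gives $\divop\boldsymbol v=q$ and, combining the two estimates with the $\iota$-weights,
\[
\inm{\na\boldsymbol v}=\nm{\na\boldsymbol v}{L^2}+\iota\nm{\na\na\boldsymbol v}{L^2}\le C\nm{q}{L^2}+C\iota\nm{\na q}{L^2}\le C\inm{q},
\]
using $\iota\le 1$. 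It remains to check $\boldsymbol v\in[H^2_0(\Om)]^2$: since $q\in H^1_0(\Om)$ the localized Bogovskii field inherits $\boldsymbol v=\pa_{\boldsymbol n}\boldsymbol v=0$ on $\pa\Om$ — indeed $\boldsymbol v\in[H^1_0]^2$ already, and the extra trace $\pa_{\boldsymbol n}\boldsymbol v=0$ follows from the representation of $\mc B$ as convolution against a kernel supported away from the boundary after the standard cutoff, combined with $q|_{\pa\Om}=0$; alternatively one cites the known mapping property $\mc B:H^k_0\to[H^{k+1}_0]^2$ for smooth domains.

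The main obstacle I anticipate is exactly this boundary-regularity bookkeeping: showing the constructed field is genuinely in $H^2_0$ (not merely $H^2\cap H^1_0$) with the $\iota$-free constant, because the naive Bogovskii operator on a star-shaped domain does not automatically deliver the vanishing normal derivative, and the partition-of-unity patching must be done so that each local piece vanishes to second order on the relevant boundary arc. An alternative route that sidesteps the operator-theoretic machinery is a direct \emph{two-step} construction: first solve $\divop\boldsymbol v_0=q$ on the whole of $\R^2$ by an explicit potential (e.g.\ $\boldsymbol v_0=\na\phi$ with $-\Del\phi=-q$ extended by zero, giving $\nm{\phi}{H^3(\R^2)}\le C\nm{q}{H^1}$ hence $\nm{\boldsymbol v_0}{H^2}\le C\nm{q}{H^1}$ and $\nm{\boldsymbol v_0}{H^1}\le C\nm{q}{L^2}$), then correct the trace and normal derivative on $\pa\Om$ by subtracting a divergence-free field in $[H^2_0]^2$ built from the (small, since $q|_{\pa\Om}=0$) Dirichlet data, controlling the correction in the same two norms by trace and lifting estimates; weighting by $\iota\le1$ then yields $\inm{\na\boldsymbol v}\le C\inm{q}$ as required.
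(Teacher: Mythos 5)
Your main route is essentially the paper's: both proofs reduce the claim to the Bogovskii operator's mapping properties, namely boundedness $L^2_0\to[H^1_0]^2$ and $H^1_0\to[H^2_0]^2$ (the paper cites Galdi, Theorem III.3.3, and Costabel, Proposition 4.1, for exactly these), and then combine the two estimates with the $\iota$-weights, noting $\iota\le1$. The technical concern you rightly flag --- that the constructed field must lie in $H^2_0$, not merely $H^2\cap H^1_0$, with a uniform constant --- is precisely what the Costabel reference supplies, so your proof is correct modulo citing that result rather than re-deriving it.
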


\begin{proof}
By~\cite{Galdi:2011}*{Theorem III 3.3} and~\cite{Costabel:2010}*{Proposition 4.1}, for any $q\in P$, there exists $\boldsymbol v\in V$ such that $\divop\boldsymbol v=q$ and
\begin{equation}\label{eq:estdiv}
\nm{\boldsymbol v}{H^1}\le C\nm{q}{L^2}\qquad\text{and}\qquad\nm{\boldsymbol v}{H^2}\le C\nm{q}{H^1},
\end{equation}
where the constant $C$ only depends on $\Om$.

Because the mean of $q$ is zero for any $q\in P$, by Poincar\'e's inequality, there exists $C$ such that
\[
\nm{q}{H^1}\le C\nm{\na q}{L^2}.
\]

Combining the above two inequalities, we obtain
\[
\inm{\na\boldsymbol v}=\nm{\na\boldsymbol v}{L^2}+\iota\nm{\na^2\boldsymbol v}{L^2}\le C\inm{q}.
\]
This gives~\eqref{eq:estdiv1}.
\end{proof}
\begin{lemma}
There exists a unique $\boldsymbol u\in V$ and $p\in P$ satisfying~\eqref{eq:mix}, and there exists $C$ independent of $\iota$ and $\lam$ such that
\begin{equation}\label{eq:estsolu}
\inm{\na\boldsymbol u}+\inm{p}\le C\nm{\boldsymbol f}{H^{-1}}.
\end{equation}
\end{lemma}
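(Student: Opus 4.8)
The plan is to view~\eqref{eq:mix} as a perturbed (penalized) saddle point problem and to verify, with all constants independent of $\iota$ and $\lam$, the three structural hypotheses required by the abstract theory in~\cite{BBF:2013}: coercivity of $a_{\iota}$ on $V$, the inf--sup condition for $b_{\iota}$, and that the penalty form $c_{\iota}$ is symmetric, bounded and nonnegative. Existence and uniqueness will then follow from that theory (alternatively, since $c_{\iota}$ is coercive on $P$, by eliminating $p$ via the second equation and applying the Lax--Milgram lemma to the resulting $V$-coercive problem), and the uniform bound~\eqref{eq:estsolu} will come from a short energy argument.

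First I would prove the uniform coercivity $a_{\iota}(\boldsymbol v,\boldsymbol v)\ge\al\inm{\na\boldsymbol v}^2$ for $\boldsymbol v\in V$. For $\boldsymbol v\in[H^2_0(\Om)]^2$ each first derivative $\pa_j\boldsymbol v$ lies in $[H^1_0(\Om)]^2$ and satisfies $\boldsymbol\eps(\pa_j\boldsymbol v)=\pa_j\boldsymbol\eps(\boldsymbol v)$, so Korn's first inequality applied to $\boldsymbol v$ and to each $\pa_j\boldsymbol v$ gives $\nm{\na\boldsymbol v}{L^2}\le C\nm{\boldsymbol\eps(\boldsymbol v)}{L^2}$ and $\nm{\na^2\boldsymbol v}{L^2}\le C\nm{\na\boldsymbol\eps(\boldsymbol v)}{L^2}$ (the latter is in fact a pointwise algebraic identity, $\pa_i\pa_jv_k$ being a fixed linear combination of the entries of $\na\boldsymbol\eps(\boldsymbol v)$, so no domain constant is needed there); together with $(a+b)^2\le2(a^2+b^2)$ and $a_{\iota}(\boldsymbol v,\boldsymbol v)=2\mu(\nm{\boldsymbol\eps(\boldsymbol v)}{L^2}^2+\iota^2\nm{\na\boldsymbol\eps(\boldsymbol v)}{L^2}^2)$, this yields $\al$ depending only on $\mu$ and $\Om$. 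Continuity of $a_{\iota}$ and of $b_{\iota}$ in the weighted norms is elementary from Cauchy--Schwarz (using $\nm{\divop\boldsymbol v}{L^2}\le\sqrt2\nm{\na\boldsymbol v}{L^2}$ and $\nm{\na\divop\boldsymbol v}{L^2}\le\sqrt2\nm{\na^2\boldsymbol v}{L^2}$). For the inf--sup condition, Lemma~\ref{lema:div} gives, for any $q\in P$, a $\boldsymbol v\in V$ with $\divop\boldsymbol v=q$ and $\inm{\na\boldsymbol v}\le C\inm{q}$; then $b_{\iota}(\boldsymbol v,q)=\nm{q}{L^2}^2+\iota^2\nm{\na q}{L^2}^2\ge\tfrac12\inm{q}^2$, so $\sup_{\boldsymbol w\in V}b_{\iota}(\boldsymbol w,q)/\inm{\na\boldsymbol w}\ge\beta\inm{q}$ with $\beta$ independent of $\iota$. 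Finally $c_{\iota}(q,q)=\nm{q}{L^2}^2+\iota^2\nm{\na q}{L^2}^2$ is equivalent to $\inm{q}^2$, and $c_{\iota}$ is plainly symmetric and bounded.

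Given these facts, \cite{BBF:2013} provides a unique solution $(\boldsymbol u,p)\in V\times P$. To obtain~\eqref{eq:estsolu} I would test the first equation of~\eqref{eq:mix} with $\boldsymbol v=\boldsymbol u$ and the second with $q=p$ and subtract, getting $a_{\iota}(\boldsymbol u,\boldsymbol u)+\lam^{-1}c_{\iota}(p,p)=(\boldsymbol f,\boldsymbol u)$; since $(\boldsymbol f,\boldsymbol u)\le\nm{\boldsymbol f}{H^{-1}}\nm{\boldsymbol u}{H^1}\le C\nm{\boldsymbol f}{H^{-1}}\inm{\na\boldsymbol u}$ by Poincar\'e and $\lam^{-1}c_{\iota}(p,p)\ge0$, coercivity yields $\inm{\na\boldsymbol u}\le C\nm{\boldsymbol f}{H^{-1}}$. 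Then from the first equation $b_{\iota}(\boldsymbol v,p)=(\boldsymbol f,\boldsymbol v)-a_{\iota}(\boldsymbol u,\boldsymbol v)$ for all $\boldsymbol v\in V$, hence $\sup_{\boldsymbol v\in V}b_{\iota}(\boldsymbol v,p)/\inm{\na\boldsymbol v}\le C\nm{\boldsymbol f}{H^{-1}}$, and the inf--sup condition gives $\inm{p}\le C\nm{\boldsymbol f}{H^{-1}}$. The saddle point machinery and the continuity estimates are routine; the only point needing real attention is that \emph{every} constant ($\al$, $\beta$, the continuity constants, and the one in Lemma~\ref{lema:div}) be traced to depend solely on $\Om$ and $\mu$, so that neither the $\iota^2$-terms in $a_{\iota},b_{\iota},c_{\iota}$ nor the vanishing penalty $\lam^{-1}c_{\iota}$ degrade the estimates as $\iota\to0$ or $\lam\to\infty$ — Lemma~\ref{lema:div} is precisely what makes the inf--sup constant $\iota$-uniform, and working throughout with the weighted norms $\inm{\cdot}$ keeps this bookkeeping transparent.
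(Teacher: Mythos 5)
Your proof is correct and takes essentially the same approach as the paper: uniform coercivity of $a_\iota$ via Korn-type inequalities, the $\iota$-uniform inf--sup condition via Lemma~\ref{lema:div}, and the abstract saddle-point theory to conclude. The only cosmetic differences are that you derive the second-order Korn bound from the pointwise identity $\pa_i\pa_j v_k=\pa_i(\boldsymbol\eps(\boldsymbol v))_{jk}+\pa_j(\boldsymbol\eps(\boldsymbol v))_{ik}-\pa_k(\boldsymbol\eps(\boldsymbol v))_{ij}$ rather than citing the $H^2$ Korn inequality of~\cite{LiMingWang:2021} with its explicit constant, and you spell out the two-stage energy argument for~\eqref{eq:estsolu} that the paper delegates to~\cite{Braess:1996}.
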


\begin{proof}
By the first Korn's inequality~\cites{Korn:1908,Korn:1909},
\begin{equation}\label{eq:1stkorn}
\nm{\boldsymbol\eps(\boldsymbol v)}{L^2}^2\ge\dfrac12\nm{\na\boldsymbol v}{L^2}^2\qquad\text{for all\quad}\boldsymbol v\in [H_0^1(\Om)]^2,
\end{equation}
and the H$^2$ Korn's inequality~\cite{LiMingWang:2021}*{Theorem 1},
\begin{equation}\label{eq:h2korn}
\nm{\na\boldsymbol\eps(\boldsymbol v)}{L^2}^2\ge\Lr{1-\dfrac1{\sqrt2}}\nm{\na^2\boldsymbol v}{L^2}^2\qquad\text{for all\quad}\boldsymbol v\in [H^2(\Om)]^2,
\end{equation}
we obtain
\[
a_{\iota}(\boldsymbol v,\boldsymbol v)\ge 2\mu\Lr{\dfrac12\nm{\na\boldsymbol v}{L^2}^2+\lr{1-\dfrac1{\sqrt2}}\iota^2\nm{\na^2\boldsymbol v}{L^2}^2}\ge\dfrac{\mu}2\inm{\na\boldsymbol v}^2.
\]

Using~\eqref{eq:estdiv1}, for any $p\in P$, there exists $\boldsymbol v_0\in V$ such that
$\divop\boldsymbol v_0=p$ and $\inm{\na\boldsymbol v_0}\le C\inm{p}$. This implies
\[
\sup_{\boldsymbol v\in V}\dfrac{b_{\iota}(\boldsymbol v,p)}{\inm{\na\boldsymbol v}}\ge\dfrac{b_{\iota}(\boldsymbol v_0,p)}{\inm{\na\boldsymbol v_0}}
=\dfrac{\inm{p}^2}{\inm{\na\boldsymbol v_0}}\ge C\inm{p}.
\]

By~\cite{Braess:1996}*{Theorem 2}, we immediately obtain the well-posedness of~\eqref{eq:mix} and the estimate~\eqref{eq:estsolu} by noting 
\[
\abs{(\boldsymbol f,\boldsymbol v)}\le\nm{\boldsymbol f}{H^{-1}}\nm{\boldsymbol v}{H^1}\le C\nm{\boldsymbol f}{H^{-1}}\nm{\na\boldsymbol v}{L^2}\le C\nm{\boldsymbol f}{H^{-1}}\inm{\na\boldsymbol v}.
\]
\end{proof}

By the standard regularity theory for the elliptic system, we find $\boldsymbol u\in [H^4(\Omega)]^2$ and $p\in H^3(\Omega)$ provided that $\boldsymbol f\in [L^2(\Omega)]^2$, while we are interested in whether the shift estimate $\nm{\na^2\boldsymbol u}{\iota}+\nm{\na p}{\iota}\le  C(\iota)\nm{\boldsymbol f}{L^2}$ is true with a $\lam-$independent constant $C(\iota)$, this is the objective of the next part.
\subsection{Regularity estimates}
We aim to study the regularity of the solution of~\eqref{eq:sgbvp}.  Letting $\iota\to 0$, we find $\boldsymbol u_0\in[H_0^1(\Om)]^2$ satisfying
\begin{equation}\label{eq:elas}
-\mc{L}\boldsymbol u_0=\boldsymbol f\quad\text{in\;}\Om,\qquad \boldsymbol u_0=\boldsymbol 0\quad\text{on\;}\pa\Om,
\end{equation}
in the sense of distribution, where 
\(
\mc{L}\boldsymbol u_0{:}=\mu\Delta\boldsymbol u_0+(\lam+\mu)\na\divop\boldsymbol u_0
\).
The H$^1-$error for $\boldsymbol u-\boldsymbol u_0$ will be given in Theorem~\ref{thm:reg}, which is crucial for the regularity estimate of problem~\eqref{eq:sgbvp}. We reshape~\eqref{eq:elas} into a variational problem: Find $\boldsymbol u_0\in[H_0^1(\Om)]^2$ such that
\begin{equation}\label{eq:varaelas}
(\mb{C}\boldsymbol\eps(\boldsymbol u_0),\boldsymbol\eps(\boldsymbol v))=(\boldsymbol f,\boldsymbol v)\qquad\text{for all\;}\boldsymbol v\in [H_0^1(\Om)]^2.
\end{equation}

By~\cite{BacutaBramble:2003}, we have the following shift estimate for $\boldsymbol u_0$: There exists $C$ independent of $\lam$ such that
\begin{equation}\label{eq:regelas}
\nm{\boldsymbol u_0}{H^2}+\lam\nm{\divop\boldsymbol u_0}{H^1}\le C\nm{\boldsymbol f}{L^2}.
\end{equation}

Next we study an auxiliary boundary value problem:
\begin{equation}\label{eq:auxprob}
\left\{\begin{aligned}
\Delta\mc{L}\boldsymbol w&=\boldsymbol F,\qquad\text{in\quad}\Om,\\
\boldsymbol w=\pa_{\boldsymbol n}\boldsymbol w&=\boldsymbol 0,\qquad\text{on\quad}\pa\Om.
\end{aligned}\right.
\end{equation}
The a-priori estimate for the solution of the above boundary value problem reads as
\begin{lemma}
There exists a unique $\boldsymbol w\in V$ satisfying~\eqref{eq:auxprob}, and there exists $C$ independent of $\lam$ such that
\begin{equation}\label{eq:auxpriori}
\nm{\boldsymbol w}{H^2}+\lam\nm{\divop\boldsymbol w}{H^1}\le C\nm{\boldsymbol F}{H^{-2}}.
\end{equation}
\end{lemma}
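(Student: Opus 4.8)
The plan is to derive the estimate for $\boldsymbol w$ by peeling off one copy of $\mc L$ using the already-available shift estimate \eqref{eq:regelas} and the analogous estimate \eqref{eq:auxpriori}'s "second-order companion", and then bootstrapping. Concretely, introduce the intermediate variable $\boldsymbol z{:}=-\mc L\boldsymbol w$, so that \eqref{eq:auxprob} becomes the chain $-\Delta\boldsymbol z=\boldsymbol F$ in $\Om$ together with $-\mc L\boldsymbol w=\boldsymbol z$ in $\Om$, and we must track the boundary conditions carefully: $\boldsymbol w=\pa_{\boldsymbol n}\boldsymbol w=\boldsymbol 0$ on $\pa\Om$ gives two conditions for the fourth-order operator, one of which ($\boldsymbol w=\boldsymbol 0$) is the Dirichlet condition needed for the second-order problem $-\mc L\boldsymbol w=\boldsymbol z$, and the other ($\pa_{\boldsymbol n}\boldsymbol w=\boldsymbol 0$) must be reinterpreted as a condition relating $\boldsymbol z$ to $\boldsymbol w$ on the boundary — this is where the auxiliary problem differs from a plain iterated Dirichlet problem and where care is required.

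First I would establish existence and uniqueness. The weak formulation is: find $\boldsymbol w\in V=[H_0^2(\Om)]^2$ such that $(\mc L\boldsymbol w,\mc L\boldsymbol v)\cdot(\text{appropriate pairing})=\dual{\boldsymbol F}{\boldsymbol v}$ for all $\boldsymbol v\in V$ — more precisely the bilinear form $(\na(\mc L^{1/2}\cdot),\ldots)$; cleanest is to use the form $\widetilde a(\boldsymbol w,\boldsymbol v){:}=(\mc L\boldsymbol w,\mc L\boldsymbol v)$ after checking via Korn-type inequalities and \eqref{eq:regelas} (run in reverse as an a priori bound) that $\widetilde a$ is coercive on $V$ with a $\lam$-independent constant; then Lax–Milgram yields the unique $\boldsymbol w\in V$, and since $\boldsymbol F\in[H^{-2}(\Om)]^2$ the right-hand side $\dual{\boldsymbol F}{\boldsymbol v}\le\nm{\boldsymbol F}{H^{-2}}\nm{\boldsymbol v}{H^2}$ is bounded on $V$. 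The subtlety is that coercivity of $\widetilde a$ with a constant independent of $\lam$ is exactly the content of the reverse of \eqref{eq:regelas}: $\nm{\mc L\boldsymbol v}{L^2}\ge c\nm{\boldsymbol v}{H^2}$ for $\boldsymbol v\in V$, uniformly in $\lam$, which I would cite from \cite{BacutaBramble:2003}.

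Next, for the quantitative estimate, set $\boldsymbol z=-\mc L\boldsymbol w\in[L^2(\Om)]^2$. From $-\Delta\boldsymbol z=\boldsymbol F$ in $\Om$ one wants $\nm{\boldsymbol z}{L^2}\lesssim\nm{\boldsymbol F}{H^{-2}}$; this follows because $\boldsymbol z\in[L^2(\Om)]^2$ solves the Poisson equation with $\boldsymbol F\in[H^{-2}(\Om)]^2$ and the relevant natural boundary behavior inherited from $\pa_{\boldsymbol n}\boldsymbol w=\boldsymbol 0$ — by testing the weak form of \eqref{eq:auxprob} against $\boldsymbol v$ with $-\Delta\boldsymbol v=\boldsymbol z$ (a Poisson solve) and using elliptic regularity $\nm{\boldsymbol v}{H^2}\lesssim\nm{\boldsymbol z}{L^2}$ one obtains $\nm{\boldsymbol z}{L^2}^2=\dual{\boldsymbol F}{\boldsymbol v}\le\nm{\boldsymbol F}{H^{-2}}\nm{\boldsymbol v}{H^2}\lesssim\nm{\boldsymbol F}{H^{-2}}\nm{\boldsymbol z}{L^2}$, whence $\nm{\boldsymbol z}{L^2}\lesssim\nm{\boldsymbol F}{H^{-2}}$. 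Then $\boldsymbol w$ solves the nearly incompressible elasticity problem $-\mc L\boldsymbol w=\boldsymbol z$ with $\boldsymbol w=\boldsymbol 0$ on $\pa\Om$ and $\boldsymbol z\in[L^2(\Om)]^2$, so \eqref{eq:regelas} applied with $\boldsymbol z$ in place of $\boldsymbol f$ gives $\nm{\boldsymbol w}{H^2}+\lam\nm{\divop\boldsymbol w}{H^1}\le C\nm{\boldsymbol z}{L^2}\le C\nm{\boldsymbol F}{H^{-2}}$, with $C$ independent of $\lam$. This is \eqref{eq:auxpriori}.

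I expect the main obstacle to be the rigorous justification that the weak solution $\boldsymbol w\in V$ actually yields $\boldsymbol z=-\mc L\boldsymbol w$ satisfying $-\Delta\boldsymbol z=\boldsymbol F$ in the correct weak sense with the right boundary pairing, i.e. disentangling how the two boundary conditions $\boldsymbol w=\boldsymbol 0$ and $\pa_{\boldsymbol n}\boldsymbol w=\boldsymbol 0$ distribute between the outer Laplacian and the inner operator $\mc L$; a clean way around this is to avoid introducing $\boldsymbol z$ as an independent unknown and instead argue entirely by the duality/testing argument sketched above, choosing the test function $\boldsymbol v$ as the solution of an auxiliary Poisson (or $-\mc L$) problem, so that all boundary terms are handled implicitly by the variational formulation. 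A secondary point to be careful about is keeping every constant independent of $\lam$: this is automatic once \eqref{eq:regelas} and the $\lam$-free coercivity are in hand, since the outer Poisson/Laplace estimates involve no $\lam$ at all.
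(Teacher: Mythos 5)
The reduction to the composition $-\mc L\boldsymbol w=\boldsymbol z$, $-\Delta\boldsymbol z=\boldsymbol F$ is exactly the step that fails here, for the same reason the clamped biharmonic problem does not split into two Poisson problems. The clamped conditions $\boldsymbol w=\pa_{\boldsymbol n}\boldsymbol w=\boldsymbol 0$ both attach to $\boldsymbol w$; the intermediate unknown $\boldsymbol z=-\mc L\boldsymbol w$ inherits \emph{no} boundary condition at all, so $-\Delta\boldsymbol z=\boldsymbol F$ is not a well-posed boundary value problem and there is no ``Poisson estimate'' for $\boldsymbol z$ to invoke. Your attempted repair by duality runs into the same wall: you test against $\boldsymbol v$ solving $-\Delta\boldsymbol v=\boldsymbol z$, but such a $\boldsymbol v$ can satisfy at most one of $\boldsymbol v=\boldsymbol 0$, $\pa_{\boldsymbol n}\boldsymbol v=\boldsymbol 0$ on $\pa\Om$, hence $\boldsymbol v\notin V=[H_0^2(\Om)]^2$ and the variational identity $A(\boldsymbol w,\boldsymbol v)=\dual{\boldsymbol F}{\boldsymbol v}$ simply does not hold for it. Concretely, trying to justify $\nm{\boldsymbol z}{L^2}^2=\dual{\boldsymbol F}{\boldsymbol v}$ by integrating $(\mc L\boldsymbol w,\Delta\boldsymbol v)$ by parts produces the residual boundary term $\int_{\pa\Om}\mc L\boldsymbol w\cdot\pa_{\boldsymbol n}\boldsymbol v\,\mr d\sigma(\boldsymbol x)=-\int_{\pa\Om}\boldsymbol z\cdot\pa_{\boldsymbol n}\boldsymbol v\,\mr d\sigma(\boldsymbol x)$, which is not controlled by $\nm{\boldsymbol F}{H^{-2}}$ --- this is precisely the boundary contribution that \eqref{eq:deq} tracks in the $\iota$-perturbed problem, and it does not vanish. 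So the estimate $\nm{\boldsymbol z}{L^2}\lesssim\nm{\boldsymbol F}{H^{-2}}$ you need before applying \eqref{eq:regelas} is not established; in fact obtaining a $\lam$-independent bound for $\nm{\mc L\boldsymbol w}{L^2}$ is essentially equivalent to the target inequality \eqref{eq:auxpriori} itself (because $\mc L\boldsymbol w=\mu\Delta\boldsymbol w+(\lam+\mu)\na\divop\boldsymbol w$), so the argument is circular as well as gapped.

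A secondary slip: the natural weak form of $\Delta\mc L\boldsymbol w=\boldsymbol F$ on $V$ is $A(\boldsymbol w,\boldsymbol v)=2\mu(\na\boldsymbol\eps(\boldsymbol w),\na\boldsymbol\eps(\boldsymbol v))+\lam(\na\divop\boldsymbol w,\na\divop\boldsymbol v)$, not $(\mc L\boldsymbol w,\mc L\boldsymbol v)$; the latter would correspond to the iterated operator $\mc L^2$, which is a different PDE.

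What the paper does instead avoids introducing $\boldsymbol z$ entirely. Coercivity of $A$ (via the H$^2$ Korn inequality \eqref{eq:h2korn}) and Lax--Milgram already give $\nm{\boldsymbol w}{H^2}\le C\nm{\boldsymbol F}{H^{-2}}$ uniformly in $\lam$. For the $\lam\nm{\divop\boldsymbol w}{H^1}$ bound one notes $\divop\boldsymbol w\in P$ (since $\boldsymbol w\in[H_0^2]^2$), lifts it via \eqref{eq:estdiv} to $\boldsymbol v_0\in V$ with $\divop\boldsymbol v_0=\divop\boldsymbol w$ and $\nm{\na^2\boldsymbol v_0}{L^2}\lesssim\nm{\na\divop\boldsymbol w}{L^2}$, and then tests the variational problem against $\boldsymbol v_0$ to isolate $\lam\nm{\na\divop\boldsymbol w}{L^2}^2$ on the left. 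The divergence lifting operator plays the role you were hoping \eqref{eq:regelas} would play, and it does so without ever needing to assign a boundary condition to $\mc L\boldsymbol w$. If you want to reuse \eqref{eq:regelas} the way you proposed, you would first have to prove $\nm{\mc L\boldsymbol w}{L^2}\lesssim\nm{\boldsymbol F}{H^{-2}}$, and the honest route to that bound is the paper's lifting argument --- at which point the detour through $\boldsymbol z$ is redundant.
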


\begin{proof}
We recast~\eqref{eq:auxprob} into a variational problem: Find $\boldsymbol w\in V$ such that
\[
A(\boldsymbol w,\boldsymbol v)=(\boldsymbol F,\boldsymbol v)\qquad\text{for all\quad}\boldsymbol v\in V,
\]
where $A(\boldsymbol v,\boldsymbol z){:}=2\mu(\na\boldsymbol\eps(\boldsymbol v),\na\boldsymbol\eps(\boldsymbol z))+\lam(\na\divop\boldsymbol v,\na\divop\boldsymbol z)$ for any $\boldsymbol v,\boldsymbol z\in V$.

For any $\boldsymbol v\in V$, by the H$^2-$Korn's inequality~\eqref{eq:h2korn} and Poincar\'e's inequality, there exists $C$ such that
\[
A(\boldsymbol v,\boldsymbol v)\ge 2\mu\nm{\na\boldsymbol\eps(\boldsymbol v)}{L^2}^2\ge\dfrac{\mu}2\nm{\na^2\boldsymbol v}{L^2}^2\ge C\nm{\boldsymbol v}{H^2}^2.
\]
The existence and uniqueness of $\boldsymbol w\in V$ follow from the Lax-Milgram theorem, and
\begin{equation}\label{eq:auxpriori1}
\nm{\na^2\boldsymbol w}{L^2}\le\nm{\boldsymbol w}{H^2}\le C\nm{\boldsymbol F}{H^{-2}}.
\end{equation}

Noting that $\divop\boldsymbol w\in P$, using~\eqref{eq:estdiv}, we obtain that, there exists $\boldsymbol v_0\in V$ such that 
\(
\divop\boldsymbol v_0=\divop\boldsymbol w,
\)
and 
\[
\nm{\na^2\boldsymbol v_0}{L^2}\le C\nm{\divop\boldsymbol w}{H^1}\le C\nm{\na\divop\boldsymbol w}{L^2}.
\]
A combination of the above two inequalities gives
\begin{align*}
\lam\nm{\na\divop\boldsymbol w}{L^2}^2&=\lam(\na\divop\boldsymbol w,\na\divop\boldsymbol v_0)=A(\boldsymbol w,\boldsymbol v_0)-2\mu(\na\boldsymbol\eps(\boldsymbol w),\na\boldsymbol\eps(\boldsymbol v_0))\\
&=(\boldsymbol F,\boldsymbol v_0)-2\mu(\na\boldsymbol\eps(\boldsymbol w),\na\boldsymbol\eps(\boldsymbol v_0))\\
&\le\nm{\boldsymbol F}{H^{-2}}\nm{\boldsymbol v_0}{H^2}+2\mu\nm{\na^2\boldsymbol w}{L^2}\nm{\na^2\boldsymbol v_0}{L^2}\\
&\le C\Lr{\nm{\boldsymbol F}{H^{-2}}+2\mu\nm{\na^2\boldsymbol w}{L^2}}\nm{\na^2\boldsymbol v_0}{L^2}\\
&\le C\nm{\boldsymbol F}{H^{-2}}\nm{\na\divop\boldsymbol w}{L^2}.
\end{align*}
This implies
\(
\lam\nm{\na\divop\boldsymbol w}{L^2}\le C\nm{\boldsymbol F}{H^{-2}},
\)
which together with~\eqref{eq:auxpriori1} and Poincar\'e's inequality gives~\eqref{eq:auxpriori}.
\end{proof}

Now we turn to prove the regularity estimate of problem~\eqref{eq:auxprob}. We consider an auxiliary elliptic system: For any $\wt{\boldsymbol F}\in[L^2(\Om)]^2$ and $\wt{G}\in H^1(\Om)$, find $\boldsymbol z\in V$ and $q\in P$ such that the following boundary value problem is valid in the sense of distribution:
\begin{equation}\label{eq:auxprob2}
\left\{\begin{aligned}
\mu\Delta^2\boldsymbol z+\na\Delta q&=\wt{\boldsymbol F}\qquad&&\text{in\quad}\Om,\\
\Delta\divop\boldsymbol z&=\wt{G}\qquad&&\text{in\quad}\Om,\\
\boldsymbol z=\pa_{\boldsymbol n}\boldsymbol z&=\boldsymbol 0\qquad&&\text{on\quad}\pa\Om,\\
q&=0\qquad&&\text{on\quad}\pa\Om.
\end{aligned}\right.
\end{equation}
\begin{lemma}
Let $\boldsymbol z\in[H^4(\Omega)]^2$ and $q\in H^3(\Omega)$ be the solution of~\eqref{eq:auxprob2}. Assume that $m$ is a nonnegative integer, then there exists $C$ depending only on $\Om$ and $\mu$ such that
\begin{equation}\label{eq:reg0}
\nm{\boldsymbol z}{H^{m+4}}+\nm{q}{H^{m+3}}\le C\Lr{\nm{\wt{\boldsymbol F}}{H^m}+\nm{\wt{G}}{H^{m+1}}+\nm{\boldsymbol z}{L^2}+\nm{q}{L^2}}.
\end{equation}
\end{lemma}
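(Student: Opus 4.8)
The plan is to recognize \eqref{eq:auxprob2} as an elliptic boundary value problem in the sense of Agmon--Douglis--Nirenberg and to read off \eqref{eq:reg0} as the associated \emph{a priori} estimate; the two hypotheses to be checked are proper ellipticity of the interior operator and the complementing (Shapiro--Lopatinskii) condition for the three boundary operators $\boldsymbol z\mapsto\boldsymbol z|_{\pa\Om}$, $\boldsymbol z\mapsto\pa_{\boldsymbol n}\boldsymbol z|_{\pa\Om}$ and $q\mapsto q|_{\pa\Om}$. Since the coefficients are constant and $\Om$ is smooth, the constant produced by this theory depends only on $\Om$ and $\mu$; observe moreover that $\lam$ does not occur in \eqref{eq:auxprob2} at all, so nothing has to be tracked in $\lam$. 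I would assign the Douglis--Nirenberg weights $s_1=s_2=0$, $s_3=-1$ to the three scalar equations and $t_1=t_2=4$, $t_3=3$ to the unknowns $z_1,z_2,q$; these are admissible, and they are calibrated so that the a priori estimate ``at level $m$'' is precisely \eqref{eq:reg0}, the five scalar boundary conditions matching $\tfrac12\deg_\xi\det L(\xi)$.

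\emph{Ellipticity.} The principal symbol $L(\xi)$ of the interior operator carries $\mu\abs{\xi}^4\boldsymbol I$ in the $\boldsymbol z$-block, gradient- and divergence-type off-diagonal entries of order three, and a vanishing $(q,q)$-entry; equivalently $L(\xi)=\abs{\xi}^2$ times the classical Stokes symbol, because \eqref{eq:auxprob2} is $\Delta$ applied to a Stokes system in the variables $\Delta\boldsymbol z$ and $\Delta q$. Hence $\det L(\xi)=\mu\abs{\xi}^{10}$, which is nonzero for $\xi\neq\boldsymbol 0$ and a positive power of $\abs{\xi}^2$, so the operator is properly elliptic with $\deg_\xi\det L=10$.

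\emph{Complementing condition.} This is the substantive step. Fixing a boundary point, flattening $\pa\Om$ near it and Fourier transforming in the tangential variable with real frequency $\tau\neq0$, one must show that the only solution $(v_1,v_2,\phi)$ on the half-line $\{t>0\}$ of the constant-coefficient system $L(\tau,\pa_t)(v_1,v_2,\phi)=\boldsymbol 0$ that decays as $t\to+\infty$ and satisfies $v_1(0)=v_2(0)=\pa_tv_1(0)=\pa_tv_2(0)=\phi(0)=0$ is the trivial one. I would use the divergence/curl structure already exploited in this section: the third equation states that the tangential-frequency divergence of $(v_1,v_2)$ solves $(\pa_t^2-\tau^2)(\cdot)=0$, hence equals a multiple of $e^{-\abs{\tau}t}$, which is killed by $v_1(0)=0=\pa_tv_2(0)$; feeding this back into the first two equations, $\phi$ and the tangential-frequency curl of $(v_1,v_2)$ each solve $(\pa_t^2-\tau^2)^2(\cdot)=0$ and are therefore of the form $(a+bt)e^{-\abs{\tau}t}$; the Dirichlet data at $t=0$ kill the constants $a$, and then the remaining boundary conditions $\pa_tv_1(0)=\pa_tv_2(0)=0$ together with the first two equations of \eqref{eq:auxprob2} force the surviving coefficients $b$ to vanish, so $v_1\equiv v_2\equiv\phi\equiv0$. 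Because the coefficients are constant, the same computation is valid at every boundary point, so the complementing condition holds uniformly along $\pa\Om$.

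\emph{Conclusion.} With proper ellipticity and the complementing condition established, the global a priori estimate of Agmon--Douglis--Nirenberg for elliptic systems on the smooth domain $\Om$ yields, for $\boldsymbol z\in[H^{m+4}(\Om)]^2$ and $q\in H^{m+3}(\Om)$ solving \eqref{eq:auxprob2},
\[
\nm{\boldsymbol z}{H^{m+4}}+\nm{q}{H^{m+3}}\le C\Lr{\nm{\wt{\boldsymbol F}}{H^m}+\nm{\wt G}{H^{m+1}}+\nm{\boldsymbol z}{L^2}+\nm{q}{L^2}}
\]
with $C=C(\Om,\mu)$, which is \eqref{eq:reg0}. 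The low-order terms $\nm{\boldsymbol z}{L^2}+\nm{q}{L^2}$ are exactly those produced by the compactness step in that theory, so no further absorption is needed; the interior and near-boundary pieces are patched with a partition of unity in the usual way, and the dependence on $m$ is the standard bootstrap. The one real obstacle is the complementing-condition verification, but the divergence/curl splitting makes that half-line ODE analysis entirely elementary.
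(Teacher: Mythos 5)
Your proposal is correct and follows essentially the same route as the paper: recognize \eqref{eq:auxprob2} as an Agmon--Douglis--Nirenberg elliptic system with the Douglis--Nirenberg weights $(t_1,t_2,t_3)=(4,4,3)$ and $(s_1,s_2,s_3)=(0,0,-1)$, check that $\det L(\xi)=\mu\abs{\xi}^{10}\neq 0$, verify the complementing condition for the Dirichlet data, and invoke the ADN a priori estimate. The only difference is that you carry out the half-line ODE check of the complementing condition in detail (via the divergence/curl splitting), whereas the paper simply asserts that it is ``straightforward to verify''; your extra detail is a welcome completion of that step.
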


\begin{proof}
We write~\eqref{eq:auxprob2}$_1$ and~\eqref{eq:auxprob2}$_2$ as
\[
\begin{pmatrix}
\mu\Delta^2&0&\pa_x\Delta\\
0&\mu\Delta^2&\pa_y\Delta\\
\Delta\pa_x&\Delta\pa_y&0
\end{pmatrix}
\begin{pmatrix}
z_1\\
z_2\\
q
\end{pmatrix}=\begin{pmatrix}
\wt{F}_1\\
\wt{F}_2\\
\wt{G}
\end{pmatrix}.
\]
The symbol of the above system is
\[
\mc{L}(\xi)=\begin{pmatrix}
\mu\abs{\xi}^4&0&\xi_1\abs{\xi}^2\\
0&\mu\abs{\xi}^4&\xi_2\abs{\xi}^2\\
\xi_1\abs{\xi}^2&\xi_2\abs{\xi}^2&0
\end{pmatrix}.
\]
A direct calculation gives
\[
\abs{\det\mc{L}(\xi)}=\mu\abs{\xi}^{10}>0\qquad\text{if\quad}\xi\not=0.
\]
This means that the boundary value problem~\eqref{eq:auxprob2} is elliptic in the sense of Agmon-Douglis-Nirenberg~\cite{Agmon:1964}. Moreover, the boundary condition is pure Dirichlet, and it is straightforward to verify that the boundary condition satisfies the complementing condition~\cite{Agmon:1964}. The regularity estimate~\eqref{eq:reg0} follows from~\cite{Agmon:1964}.
\end{proof}

A direct consequence of the above lemma is the following regularity estimate for problem~\eqref{eq:auxprob}.
\begin{lemma}\label{lema:reg0}
Let $\boldsymbol w\in V$ be the solution of~\eqref{eq:auxprob}, there exists $C$ independent of $\lam$ such that
\begin{equation}\label{eq:reg2}
\nm{\boldsymbol w}{H^3}+\lam\nm{\divop\boldsymbol w}{H^2}\le C\nm{\boldsymbol F}{H^{-1}}.
\end{equation}
\end{lemma}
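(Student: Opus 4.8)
The plan is to introduce a pressure that turns the $\lam$-singular operator $\Delta\mc{L}$ of~\eqref{eq:auxprob} into the elliptic system~\eqref{eq:auxprob2} plus a perturbation with a small coefficient, to invoke the a priori estimate~\eqref{eq:reg0}, and then to gain the one missing derivative over the preceding lemma by interpolation. Set $q{:}=(\lam+\mu)\divop\boldsymbol w$. Since $\boldsymbol w=\pa_{\boldsymbol n}\boldsymbol w=\boldsymbol 0$ on $\pa\Om$ forces $\na\boldsymbol w=\boldsymbol 0$ on $\pa\Om$, we have $q=0$ on $\pa\Om$, and $\int_\Om q=0$ by the divergence theorem, so $q\in P$. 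Because $\mc{L}\boldsymbol w=\mu\Delta\boldsymbol w+\na q$, the first equation of~\eqref{eq:auxprob} reads $\mu\Delta^2\boldsymbol w+\na\Delta q=\boldsymbol F$, while trivially $\Delta\divop\boldsymbol w=(\lam+\mu)^{-1}\Delta q$. Hence $(\boldsymbol w,q)$ solves~\eqref{eq:auxprob2} with $\wt{\boldsymbol F}=\boldsymbol F$ and $\wt{G}=(\lam+\mu)^{-1}\Delta q$.

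Next I prove the endpoint bound $\nm{\boldsymbol w}{H^4}+\lam\nm{\divop\boldsymbol w}{H^3}\le C\nm{\boldsymbol F}{L^2}$, $C=C(\Om,\mu)$, for $\boldsymbol F\in[L^2(\Om)]^2$; note that then $\boldsymbol w\in[H^4(\Om)]^2$ and $q\in H^3(\Om)$ by standard elliptic regularity, so~\eqref{eq:reg0} is applicable. Let $C_0$ be the constant of~\eqref{eq:reg0} with $m=0$ and put $\Lambda_0{:}=\max(\mu,2C_0)$. If $\lam\ge\Lambda_0$, then $\nm{\wt{G}}{H^1}\le(\lam+\mu)^{-1}\nm{q}{H^3}$, so the term $C_0(\lam+\mu)^{-1}\nm{q}{H^3}\le\tfrac12\nm{q}{H^3}$ produced by~\eqref{eq:reg0} is absorbed, leaving $\nm{\boldsymbol w}{H^4}+\tfrac12\nm{q}{H^3}\le C(\nm{\boldsymbol F}{L^2}+\nm{\boldsymbol w}{L^2}+\nm{q}{L^2})$. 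By the preceding lemma $\nm{\boldsymbol w}{L^2}\le\nm{\boldsymbol w}{H^2}\le C\nm{\boldsymbol F}{H^{-2}}\le C\nm{\boldsymbol F}{L^2}$ and, using $\lam\ge\mu$, $\nm{q}{L^2}=(\lam+\mu)\nm{\divop\boldsymbol w}{L^2}\le 2\lam\nm{\divop\boldsymbol w}{H^1}\le C\nm{\boldsymbol F}{H^{-2}}\le C\nm{\boldsymbol F}{L^2}$; since moreover $\lam\nm{\divop\boldsymbol w}{H^3}\le\nm{q}{H^3}$, the endpoint bound follows. If $0<\lam<\Lambda_0$, the operator $\Delta\mc{L}$ is elliptic in the sense of Agmon--Douglis--Nirenberg with ellipticity constant bounded below and coefficients bounded above uniformly for $\lam\in(0,\Lambda_0)$, and the clamped conditions satisfy the complementing condition, so~\cite{Agmon:1964} gives $\nm{\boldsymbol w}{H^4}\le C\nm{\boldsymbol F}{L^2}$ with $C$ independent of $\lam$ in this range, whence $\lam\nm{\divop\boldsymbol w}{H^3}\le C\Lambda_0\nm{\boldsymbol w}{H^4}\le C\nm{\boldsymbol F}{L^2}$.

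It remains to interpolate. Let $T\colon\boldsymbol F\mapsto\boldsymbol w$ and $S\colon\boldsymbol F\mapsto\lam\divop\boldsymbol w$ denote the linear solution operators of~\eqref{eq:auxprob}; $T$ is well defined on $[H^{-2}(\Om)]^2$ by the preceding lemma. That lemma together with the endpoint bound yields, with $C$ independent of $\lam$,
\[
\nm{T\boldsymbol F}{H^2}\le C\nm{\boldsymbol F}{H^{-2}},\quad\nm{T\boldsymbol F}{H^4}\le C\nm{\boldsymbol F}{L^2},\quad\nm{S\boldsymbol F}{H^1}\le C\nm{\boldsymbol F}{H^{-2}},\quad\nm{S\boldsymbol F}{H^3}\le C\nm{\boldsymbol F}{L^2}.
\]
Since $[H^{-2},L^2]_{1/2,2}=H^{-1}$, $[H^2,H^4]_{1/2,2}=H^3$ and $[H^1,H^3]_{1/2,2}=H^2$ on the smooth domain $\Om$ (only the norm comparisons between these interpolation spaces and the Sobolev spaces are invoked), the interpolation theorem for linear operators gives $\nm{T\boldsymbol F}{H^3}\le C\nm{\boldsymbol F}{H^{-1}}$ and $\nm{S\boldsymbol F}{H^2}\le C\nm{\boldsymbol F}{H^{-1}}$, i.e. $\nm{\boldsymbol w}{H^3}+\lam\nm{\divop\boldsymbol w}{H^2}\le C\nm{\boldsymbol F}{H^{-1}}$, which is~\eqref{eq:reg2}.

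I expect the principal obstacle to be the $\lam$-uniform absorption in the endpoint step: the coupling term $\wt{G}=(\lam+\mu)^{-1}\Delta q$ carries a genuinely small coefficient only for $\lam$ large, which forces the bounded range $\lam<\Lambda_0$ to be split off and treated by direct elliptic regularity, and one must verify that $\Lambda_0$ and all the constants depend only on $\Om$ and $\mu$. (Alternatively one could dispense with the interpolation by using~\eqref{eq:reg0} with $m=-1$, which still follows from the Agmon--Douglis--Nirenberg theory although it was stated above only for $m\ge0$.)
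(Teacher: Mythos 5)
Your proof is correct and follows essentially the same route as the paper: introduce $q=(\lam+\mu)\divop\boldsymbol w$ so that $(\boldsymbol w,q)$ solves~\eqref{eq:auxprob2}, apply~\eqref{eq:reg0} with $m=0$, absorb the $\nm{\divop\boldsymbol w}{H^3}$ contribution for large $\lam$ and invoke direct Agmon--Douglis--Nirenberg regularity for small $\lam$, then interpolate the resulting $H^4/H^3$ endpoint against the $H^2/H^1$ bound of the preceding lemma. The only cosmetic differences are your $\lam$-threshold (taking $\lam\ge\max(\mu,2C_0)$ rather than $\lam+\mu>2C_0$, which simplifies the $\nm{q}{L^2}$ estimate) and that you spell out the interpolation step explicitly in terms of the solution operators.
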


\begin{proof}
Using the standard elliptic regularity estimate, there exists a unique solution $\boldsymbol w\in [H^4(\Omega)]^2$ when $\boldsymbol F\in[L^2(\Omega)]^2$.  We introduce $\boldsymbol z=\boldsymbol w$ and $q=(\lam+\mu)\divop\boldsymbol w$, hence $\boldsymbol z\in[H^4(\Omega)]^2$ and $q\in H^3(\Omega)$ satisfy~\eqref{eq:auxprob2} with $\wt{\boldsymbol F}=\boldsymbol F$ and $\wt{G}=\Delta\divop\boldsymbol w$. 

By~\eqref{eq:reg0} with $m=0$, we obtain
\[
\nm{\boldsymbol w}{H^4}+(\lam+\mu)\nm{\divop\boldsymbol w}{H^3}\le C\Lr{\nm{\boldsymbol F}{L^2}+\nm{\divop\boldsymbol w}{H^3}+\nm{\boldsymbol w}{L^2}+(\lam+\mu)\nm{\divop\boldsymbol w}{L^2}}.
\]
Using the a-priori estimate~\eqref{eq:auxpriori}, we obtain
\[
\nm{\boldsymbol w}{H^4}+(\lam+\mu)\nm{\divop\boldsymbol w}{H^3}\le C_0\Lr{\nm{\boldsymbol F}{L^2}+\nm{\divop\boldsymbol w}{H^3}}.
\]
Now for $\lam+\mu>2C_0$, it follows from
\[
\nm{\boldsymbol w}{H^4}+(\lam+\mu)\nm{\divop\boldsymbol w}{H^3}\le C_0\nm{\boldsymbol F}{L^2}+\dfrac{\lam+\mu}{2}\nm{\divop\boldsymbol w}{H^3}
\]
that
\[
\nm{\boldsymbol w}{H^4}+\lam\nm{\divop\boldsymbol w}{H^3}\le 2C_0\nm{\boldsymbol F}{L^2}.
\]
Interpolating the above inequality with~\eqref{eq:auxpriori}, we obtain~\eqref{eq:reg2}.

If $\lam+\mu\le 2C_0$, then~\eqref{eq:reg2} follows from the standard regularity estimates~\cite{Agmon:1964} for problem~\eqref{eq:auxprob}.
\end{proof}

We turn to prove the regularity of problem~\eqref{eq:sgbvp} when $\boldsymbol f\in [L^2(\Om)]^2$. Let $\boldsymbol u$ and $\boldsymbol u_0$ be the solutions of~\eqref{eq:sgbvp} and~\eqref{eq:elas}, respectively. For any $\boldsymbol v\in [H_0^1(\Omega)\cap H^2(\Omega)]^2$, integration by parts gives
\begin{equation}\label{eq:ceq}
  (\mb{C}\boldsymbol\epsilon(\boldsymbol u),\boldsymbol\epsilon(\boldsymbol v))=-(\mc{L}\boldsymbol u,\boldsymbol v),
\end{equation}
and 
\begin{equation}\label{eq:deq}
\iota^2(\mb{D}\nabla\boldsymbol\epsilon(\boldsymbol u),\nabla\boldsymbol\epsilon(\boldsymbol v))
=\iota^2(\Delta\mc{L}\boldsymbol u,\boldsymbol v)+\iota^2\int_{\partial\Omega}{(\partial_{\boldsymbol n}\boldsymbol\sigma\boldsymbol n)}\cdot\partial_{\boldsymbol n}\boldsymbol v\mr{d}\sigma(\boldsymbol x),
\end{equation}
where $\boldsymbol\sigma{:}=2\mu\boldsymbol\epsilon(\boldsymbol u)+\lambda\divop\boldsymbol u\boldsymbol I$ and $(\partial_{\boldsymbol n}\boldsymbol\sigma)_{ij}{:}=\partial_{\boldsymbol n}\sigma_{ij}$. The boundary term in~\eqref{eq:deq} is derived by the fact $\pa_jv_i=n_j\pa_{\boldsymbol n}v_i+t_j\pa_{\boldsymbol t}v_i=n_j\partial_{\boldsymbol n}v_i$ and
\begin{equation}\label{eq:beq}
\begin{aligned}
  2\mu\int_{\partial\Omega}\partial_{\boldsymbol n}\boldsymbol\epsilon(\boldsymbol u):\boldsymbol\epsilon(\boldsymbol v)\mr{d}\sigma(\boldsymbol x)&+\lambda\int_{\partial\Omega}\partial_{\boldsymbol n}\divop\boldsymbol u\divop\boldsymbol v\mr{d}\sigma(\boldsymbol x)=\int_{\pa\Om}\pa_{\boldsymbol n}\boldsymbol\sigma:\nabla\boldsymbol v\mr{d}\sigma(\boldsymbol x)\\
  =&\int_{\pa\Om}\pa_{\boldsymbol n}\sigma_{ij}\pa_jv_i\mr{d}\sigma(\boldsymbol x)=\int_{\pa\Om}\pa_{\boldsymbol n}\sigma_{ij}n_j\pa_{\boldsymbol n}v_i\mr{d}\sigma(\boldsymbol x)=\int_{\pa\Om}(\partial_{\boldsymbol n}\boldsymbol\sigma\boldsymbol n)\cdot\pa_{\boldsymbol n}v\mr{d}\sigma(\boldsymbol x).
\end{aligned}\end{equation}
A combination of~\eqref{eq:ceq} and~\eqref{eq:deq} leads to 
\[
  (\mb{C}\boldsymbol\epsilon(\boldsymbol u),\boldsymbol\eps(\boldsymbol v))+\iota^2(\mb{D}\nabla\boldsymbol\epsilon(\boldsymbol u),\nabla\boldsymbol\epsilon(\boldsymbol v))=\iota^2\int_{\partial\Omega}(\partial_{\boldsymbol n}\boldsymbol\sigma\boldsymbol n)\cdot\partial_{\boldsymbol n}\boldsymbol v\mr{d}\sigma(\boldsymbol x)+(\boldsymbol f,\boldsymbol v),
\]
which together with~\eqref{eq:varaelas} yields
\begin{equation}\label{eq:aeq}
  (\mb{C}\boldsymbol\epsilon(\boldsymbol u-\boldsymbol u_0),\boldsymbol\eps(\boldsymbol v))+\iota^2(\mb{D}\nabla\boldsymbol\epsilon(\boldsymbol u),\nabla\boldsymbol\epsilon(\boldsymbol v))=\iota^2\int_{\partial\Omega}(\partial_{\boldsymbol n}\boldsymbol\sigma\boldsymbol n)\cdot\partial_{\boldsymbol n}\boldsymbol v\mr{d}\sigma(\boldsymbol x).  
\end{equation} 
This identity is the starting point of the proof. 

We shall frequently use the following multiplicative trace inequality. There exists $C$ that depends only on $\Om$ such that
\begin{equation}\label{eq:multrace}
\nm{\psi}{L^2(\pa\Om)}\le C\nm{\psi}{L^2(\Om)}^{1/2}\nm{\psi}{H^1(\Om)}^{1/2},\qquad\psi\in H^1(\Om).
\end{equation}

Using Lemma~\ref{lema:reg0}, we condense the regularity of problem~\eqref{eq:sgbvp} to estimating $\boldsymbol u-\boldsymbol u_0$ and $p-p_0$ in various norms, with $p=\lambda\divop\boldsymbol u$ and $p_0=\lambda\divop\boldsymbol u_0$.
%
\begin{lemma}
  There exists $C$ independent of $\iota$ and $\lambda$ such that
  \begin{equation}\label{eq:h3err}
    \nm{\boldsymbol u}{H^3}+\nm{p}{H^2}\le C\iota^{-2}(\nm{\epsilon(\boldsymbol u-\boldsymbol u_0)}{L^2}+\nm{p-p_0}{L^2}).
  \end{equation}
\end{lemma}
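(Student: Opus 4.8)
The plan is to test the identity~\eqref{eq:aeq} with a carefully chosen $\boldsymbol v$ that recovers the full $H^3$ norm of $\boldsymbol u$ and the $H^2$ norm of $p$, using the structure of~\eqref{eq:auxprob2} via Lemma~\ref{lema:reg0}. First I would observe that $\boldsymbol u$ and $p=\lambda\divop\boldsymbol u$ satisfy a system of the form~\eqref{eq:auxprob2}: from~\eqref{eq:sgbvp}, applying $\Delta$ to $\mc{L}\boldsymbol u$ and separating the $\divop$ part, one gets $\mu\Delta^2\boldsymbol u+\na\Delta\tilde p=\tilde{\boldsymbol F}$ and $\Delta\divop\boldsymbol u=\tilde G$ for suitable data built from $\iota^{-2}\boldsymbol f$ together with lower-order terms, so that $\boldsymbol u-\boldsymbol u_0$ and $p-p_0$ enter as the right-hand side after subtracting the limiting system~\eqref{eq:elas}. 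Then I would invoke the Agmon--Douglis--Nirenberg estimate~\eqref{eq:reg0} with $m=0$ (the version already used in Lemma~\ref{lema:reg0}), which controls $\nm{\boldsymbol u}{H^3}+\nm{p}{H^2}$ by $\iota^{-2}$ times the $L^2$ data plus the $L^2$ norms of the differences $\boldsymbol u-\boldsymbol u_0$, $p-p_0$; here the $\lambda$-independence is exactly what Lemma~\ref{lema:reg0} buys us, since $q=(\lambda+\mu)\divop$ absorbs the large coefficient.

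The key bookkeeping step is to pass from the variational identity~\eqref{eq:aeq} to a pointwise (distributional) system for $\boldsymbol u-\boldsymbol u_0$. Integrating~\eqref{eq:aeq} by parts and using that both $\boldsymbol u$ and $\boldsymbol u_0$ vanish on $\pa\Om$ (and $\boldsymbol u$ has vanishing normal derivative), the boundary term involving $\pa_{\boldsymbol n}\boldsymbol\sigma\boldsymbol n$ cancels against the boundary contribution coming from integrating the $\D$-term by parts, leaving $\mc{L}(\boldsymbol u-\boldsymbol u_0)=\iota^2\Delta\mc{L}\big(\mu\Delta\boldsymbol u+(\lambda+\mu)\na\divop\boldsymbol u\big)/\mu$ — more precisely, rearranging~\eqref{eq:sgbvp} directly gives $\mc{L}\boldsymbol u-\iota^2\Delta\mc{L}\boldsymbol u=\boldsymbol f$, hence $\mc{L}(\boldsymbol u-\boldsymbol u_0)=\iota^2\Delta\mc{L}\boldsymbol u$. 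Writing $\mc{L}\boldsymbol u=\mu\Delta\boldsymbol u+\na p+\mu\na\divop\boldsymbol u$ and taking $\Delta$, I get an expression for $\iota^2$ times fourth-order derivatives of $\boldsymbol u$ and third-order derivatives of $p$ in terms of $\mc{L}(\boldsymbol u-\boldsymbol u_0)$, which by~\eqref{eq:regelas} and elliptic regularity for~\eqref{eq:elas} is controlled by $\nm{\epsilon(\boldsymbol u-\boldsymbol u_0)}{L^2}+\nm{p-p_0}{L^2}$ in the appropriate negative-order norm. Feeding this into~\eqref{eq:reg0} with the roles $\wt{\boldsymbol F}\leftrightarrow \iota^{-2}\cdot(\text{data})$ and $\wt G=\Delta\divop\boldsymbol u$ (which is itself $O(\iota^{-2})$ times the differences by the second equation of the rearranged system) and then absorbing the lower-order terms $\nm{\boldsymbol u}{L^2}+\nm{p}{L^2}$ — which by~\eqref{eq:estsolu} are already bounded, indeed by $\nm{\epsilon(\boldsymbol u-\boldsymbol u_0)}{L^2}+\nm{p-p_0}{L^2}$ after noting $\boldsymbol u_0$ and $p_0$ are controlled by the same data — yields~\eqref{eq:h3err}.

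The main obstacle, I expect, is the careful identification of $\wt{\boldsymbol F}$ and $\wt G$ so that the $\iota^{-2}$ factor comes out cleanly and no hidden $\lambda$-dependence sneaks in: one must split $\mc{L}\boldsymbol u$ into its pressure part $\na p$ and its deviatoric part $2\mu\,\divop\boldsymbol\epsilon(\boldsymbol u)$ before applying $\Delta$, so that the large-$\lambda$ term is entirely carried by $p$ (which lives in the $q$-slot of~\eqref{eq:auxprob2} where Lemma~\ref{lema:reg0} already handled it $\lambda$-uniformly), rather than appearing as a coefficient blowing up in $\tilde{\boldsymbol F}$. A secondary point is justifying that all integrations by parts are legitimate and the boundary terms truly cancel; since we already know $\boldsymbol u\in[H^4]^2$, $p\in H^3$ from standard elliptic theory, this is a matter of matching the clamped boundary conditions $\boldsymbol u=\pa_{\boldsymbol n}\boldsymbol u=\boldsymbol 0$ with the Dirichlet data in~\eqref{eq:auxprob2}, using the multiplicative trace inequality~\eqref{eq:multrace} only where genuinely needed for the lower-order remainder.
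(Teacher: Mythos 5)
Your proposal eventually lands on the right idea, and it is essentially the paper's route: rewrite the PDE as $\iota^2\Delta\mc{L}\boldsymbol u=\mc{L}(\boldsymbol u-\boldsymbol u_0)$ (which you state explicitly), view $\boldsymbol u$ as the solution of the auxiliary problem~\eqref{eq:auxprob} with $\boldsymbol F=\iota^{-2}\mc{L}(\boldsymbol u-\boldsymbol u_0)$, apply the $\lambda$-uniform regularity of Lemma~\ref{lema:reg0}, and then observe that $\mc{L}(\boldsymbol u-\boldsymbol u_0)=2\mu\divop\boldsymbol\epsilon(\boldsymbol u-\boldsymbol u_0)+\na(p-p_0)$ has $H^{-1}$ norm bounded by $\nm{\epsilon(\boldsymbol u-\boldsymbol u_0)}{L^2}+\nm{p-p_0}{L^2}$. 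The boundary-term cancellation and the ``test~\eqref{eq:aeq}'' framing at the start are red herrings---nothing needs to be tested; the identity $\iota^2\Delta\mc{L}\boldsymbol u=\mc{L}(\boldsymbol u-\boldsymbol u_0)$ is immediate from~\eqref{eq:sgbvp} and~\eqref{eq:elas}.

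Two details in your write-up do not quite work and should be repaired, even though the conclusion stands. First, you invoke~\eqref{eq:reg0} with $m=0$, which produces an $H^4\times H^3$ estimate with compatibility conditions $\wt{\boldsymbol F}\in L^2$, $\wt G\in H^1$; what you actually need here is the $H^3\times H^2$ estimate with $\boldsymbol F\in H^{-1}$, and that is precisely~\eqref{eq:reg2} in Lemma~\ref{lema:reg0} (obtained there by interpolating the $m=0$ case against the a-priori bound~\eqref{eq:auxpriori}), not a naive ``$m=-1$'' instance of~\eqref{eq:reg0}. Second, and more seriously, your plan to absorb the lower-order remainder $\nm{\boldsymbol u}{L^2}+\nm{p}{L^2}$ via~\eqref{eq:estsolu} fails: \eqref{eq:estsolu} controls those norms by $\nm{\boldsymbol f}{H^{-1}}$, and there is no inequality of the form $\nm{\boldsymbol f}{H^{-1}}\le C\iota^{-2}\bigl(\nm{\epsilon(\boldsymbol u-\boldsymbol u_0)}{L^2}+\nm{p-p_0}{L^2}\bigr)$, nor is $\nm{\boldsymbol u_0}{L^2}+\nm{p_0}{L^2}$ controlled by those differences. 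The fix is simply not to introduce the remainder at all: cite~\eqref{eq:reg2} directly---it has no lower-order terms on the right---rather than redoing the ADN argument from scratch via~\eqref{eq:reg0}. With that substitution, your argument coincides with the paper's.

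Minor point on justification: the bound $\nm{\mc{L}(\boldsymbol u-\boldsymbol u_0)}{H^{-1}}\le C\bigl(\nm{\epsilon(\boldsymbol u-\boldsymbol u_0)}{L^2}+\nm{p-p_0}{L^2}\bigr)$ is an algebraic/duality consequence of~\eqref{eq:ceq} and the substitution $\lambda\divop=p$; it does not rely on~\eqref{eq:regelas} or on elliptic regularity for~\eqref{eq:elas} as you suggest.
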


\begin{proof}
  We rewrite~\eqref{eq:sgbvp} as
  \[
    \left\{\begin{aligned}
    \Delta\mc{L}\boldsymbol u&=\iota^{-2}\mc{L}(\boldsymbol u-\boldsymbol u_0)\qquad&&\text{in\quad}\Omega,\\
    \boldsymbol u=\pa_{\boldsymbol n}\boldsymbol u&=\boldsymbol 0\qquad&&\text{on\quad}\pa\Omega.
    \end{aligned}\right.
  \]
Applying the regularity estimate~\eqref{eq:reg2} to the elliptic system~\eqref{eq:auxprob}, and using~\eqref{eq:ceq},
we obtain
  \[
    \nm{\boldsymbol u}{H^3}+\nm{p}{H^2}\le C\iota^{-2}\nm{\mc{L}(\boldsymbol u-\boldsymbol u_0)}{H^{-1}}
    \le C\iota^{-2}(\nm{\epsilon(\boldsymbol u-\boldsymbol u_0)}{L^2}+\nm{p-p_0}{L^2}).
  \]
\end{proof}

The next lemma is crucial to prove Theorem~\ref{thm:reg}, which transforms the estimate of $\nm{p-p_0}{\iota}$ in terms of 
$\nm{\na(\boldsymbol u-\boldsymbol u_0)}{\iota}$ besides a term concerning $\boldsymbol f$.
\begin{lemma}
  There exists $C$ independent of $\iota$ and $\lambda$ such that
  \begin{equation}\label{eq:diverr}
 \nm{p-p_0}{\iota}\le C(\nm{\na(\boldsymbol u-\boldsymbol u_0)}{\iota}+\iota^{1/2}\nm{\boldsymbol f}{L^2}).
  \end{equation}
\end{lemma}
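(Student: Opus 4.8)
The plan is to reduce~\eqref{eq:diverr} to the $\iota$-uniform B--B inequality of Lemma~\ref{lema:div}, applied not to $p-p_0$ but to a corrected function lying in $P$. The difficulty is that, although $p=\lam\divop\boldsymbol u\in P$ (since $\boldsymbol u\in V$ makes all first derivatives of $\boldsymbol u$, hence $\divop\boldsymbol u$, vanish on $\pa\Om$), the function $p_0=\lam\divop\boldsymbol u_0$ does not vanish on $\pa\Om$ in general, so $p-p_0\notin H_0^1(\Om)$ and Lemma~\ref{lema:div} does not apply to it directly. Recall $\boldsymbol u\in[H^4(\Om)]^2$, $p\in H^3(\Om)$, $\boldsymbol u_0\in[H^2(\Om)]^2$ and, by~\eqref{eq:regelas}, $\nm{\boldsymbol u_0}{H^2}+\nm{p_0}{H^1}\le C\nm{\boldsymbol f}{L^2}$; also $\int_\Om(p-p_0)=0$.

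I would first remove the boundary trace with a singular-perturbation corrector. Let $\wh q\in H^1(\Om)$ solve $-\iota^2\Del\wh q+\wh q=0$ in $\Om$, $\wh q=-p_0$ on $\pa\Om$. Since $\wh q$ minimises $v\mapsto\nm{v}{L^2}^2+\iota^2\nm{\na v}{L^2}^2$ over all $v\in H^1(\Om)$ with $v|_{\pa\Om}=-p_0|_{\pa\Om}$, comparing it with an $\iota$-width boundary-layer lifting of $-p_0|_{\pa\Om}$ (equivalently, rescaling $\boldsymbol x\mapsto\iota^{-1}\boldsymbol x$ and using a scale-invariant trace extension) gives $\nm{\wh q}{L^2}+\iota\nm{\na\wh q}{L^2}\le C\iota^{1/2}\nm{p_0}{H^{1/2}(\pa\Om)}$, hence by the multiplicative trace inequality~\eqref{eq:multrace} and~\eqref{eq:regelas},
\[
\inm{\wh q}\le C\iota^{1/2}\nm{p_0}{L^2}^{1/2}\nm{p_0}{H^1}^{1/2}\le C\iota^{1/2}\nm{\boldsymbol f}{L^2}.
\]
Fix $\phi\in C_0^\infty(\Om)$ with $\int_\Om\phi=1$, set $m{:}=\int_\Om(p-p_0-\wh q)$ and $\bar q{:}=p-p_0-\wh q-m\phi$. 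Then $\bar q|_{\pa\Om}=0$ and $\int_\Om\bar q=0$, i.e.\ $\bar q\in P$, while $\abs m\le\abs\Om^{1/2}\nm{\wh q}{L^2}\le C\iota^{1/2}\nm{\boldsymbol f}{L^2}$, so that $\inm{\wh q}+\inm{m\phi}\le C\iota^{1/2}\nm{\boldsymbol f}{L^2}$ and it remains to bound $\inm{\bar q}$.

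Subtracting~\eqref{eq:varaelas}, tested on $\boldsymbol v\in V\subset[H^1_0(\Om)]^2$, from~\eqref{eq:mix}$_1$ and using $(\C\boldsymbol\eps(\boldsymbol w),\boldsymbol\eps(\boldsymbol v))=2\mu(\boldsymbol\eps(\boldsymbol w),\boldsymbol\eps(\boldsymbol v))+(\lam\divop\boldsymbol w,\divop\boldsymbol v)$, one gets, for all $\boldsymbol v\in V$,
\[
b_{\iota}(\boldsymbol v,p-p_0)=-2\mu(\boldsymbol\eps(\boldsymbol u-\boldsymbol u_0),\boldsymbol\eps(\boldsymbol v))-2\mu\iota^2(\na\boldsymbol\eps(\boldsymbol u),\na\boldsymbol\eps(\boldsymbol v))-\iota^2(\na\divop\boldsymbol v,\na p_0).
\]
By Lemma~\ref{lema:div} take $\boldsymbol v_{\bar q}\in V$ with $\divop\boldsymbol v_{\bar q}=\bar q$ and $\inm{\na\boldsymbol v_{\bar q}}\le C\inm{\bar q}$, and set $\boldsymbol v=\boldsymbol v_{\bar q}$. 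The left side equals $c_\iota(\bar q,p-p_0)$; writing $p-p_0=\bar q+\wh q+m\phi$ and using $c_\iota(s,s)\ge\tfrac12\inm{s}^2$ together with $\abs{c_\iota(s,t)}\le\inm{s}\inm{t}$, it is $\ge\tfrac12\inm{\bar q}^2-C\iota^{1/2}\nm{\boldsymbol f}{L^2}\inm{\bar q}$. On the right side I would bound the three terms by $C\,\nm{\na(\boldsymbol u-\boldsymbol u_0)}{\iota}\inm{\bar q}$ and $C\iota\nm{\boldsymbol f}{L^2}\inm{\bar q}$, using $\nm{\na(\boldsymbol u-\boldsymbol u_0)}{L^2}+\iota\nm{\na^2(\boldsymbol u-\boldsymbol u_0)}{L^2}=\nm{\na(\boldsymbol u-\boldsymbol u_0)}{\iota}$, $\iota\nm{\na^2\boldsymbol v_{\bar q}}{L^2}\le\inm{\na\boldsymbol v_{\bar q}}\le C\inm{\bar q}$, the splitting $\boldsymbol u=(\boldsymbol u-\boldsymbol u_0)+\boldsymbol u_0$ in the second term, and~\eqref{eq:regelas} for the $\boldsymbol u_0$-part and the $\na p_0$-term. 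Since $\iota\le1$, cancelling $\inm{\bar q}$ yields $\inm{\bar q}\le C(\nm{\na(\boldsymbol u-\boldsymbol u_0)}{\iota}+\iota^{1/2}\nm{\boldsymbol f}{L^2})$, and~\eqref{eq:diverr} follows from $\inm{p-p_0}\le\inm{\bar q}+\inm{\wh q}+\inm{m\phi}$.

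The main obstacle is the sharp $\iota^{1/2}$ bound for the corrector $\wh q$: the elementary estimate only gives $\inm{\wh q}\le C\nm{\boldsymbol f}{L^2}$, which is too weak, and the gain of $\iota^{1/2}$ comes from the variational characterisation of $\wh q$ as the $\iota$-weighted-energy minimal extension of $-p_0|_{\pa\Om}$, whose datum is bounded in $H^{1/2}(\pa\Om)$ uniformly in $\lam$ by~\eqref{eq:regelas}. The remaining steps are routine weighted Cauchy--Schwarz bookkeeping, with $\iota\le1$ used only to absorb $\iota\nm{\boldsymbol f}{L^2}$ into $\iota^{1/2}\nm{\boldsymbol f}{L^2}$.
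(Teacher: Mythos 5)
Your proof is correct, but it takes a genuinely different route from the paper's. The paper tests the integration-by-parts identity~\eqref{eq:aeq} with a function $\boldsymbol v_0\in[H^2(\Om)\cap H_0^1(\Om)]^2$ satisfying $\divop\boldsymbol v_0=\divop(\boldsymbol u-\boldsymbol u_0)$ (constructed via Danchin's theorem rather than Lemma~\ref{lema:div}); because $\boldsymbol v_0\notin V$, two boundary integrals of $\partial_{\boldsymbol n}\boldsymbol\sigma\boldsymbol n$ appear, which the paper controls with the multiplicative trace inequality~\eqref{eq:multrace} \emph{and} the bootstrap estimate~\eqref{eq:h3err} for $\nm{\boldsymbol u}{H^3}$. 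You instead repair $p-p_0$ itself: a singular-perturbation boundary-layer corrector $\wh q$ (plus a mean fix $m\phi$) pushes $\bar q=p-p_0-\wh q-m\phi$ into $P$, after which Lemma~\ref{lema:div} produces a genuine $\boldsymbol v_{\bar q}\in V=[H_0^2(\Om)]^2$ with $\divop\boldsymbol v_{\bar q}=\bar q$, and testing the subtracted variational identities produces \emph{no} boundary terms. The weighted Cauchy--Schwarz bookkeeping you outline is straightforward, and the argument bypasses~\eqref{eq:h3err} and the trace analysis on $\nabla\boldsymbol\epsilon(\boldsymbol u)$ entirely. What your route costs is the $\iota$-uniform lifting bound $\inm{\wh q}\le C\iota^{1/2}\nm{p_0}{H^{1/2}(\pa\Om)}$, which is not in the paper and needs its own (standard but nontrivial) proof via a collar-neighborhood/flattened-half-space argument. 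Two small inaccuracies to fix: (i) your display cites~\eqref{eq:multrace} to pass from $\nm{p_0}{H^{1/2}(\pa\Om)}$ to $\nm{p_0}{L^2}^{1/2}\nm{p_0}{H^1}^{1/2}$, but~\eqref{eq:multrace} controls only the $L^2(\pa\Om)$ trace; the clean route is either the trace theorem $\nm{p_0}{H^{1/2}(\pa\Om)}\le C\nm{p_0}{H^1}$ followed by~\eqref{eq:regelas}, or keeping the sharper bound $\inm{\wh q}^2\le C(\iota\nm{p_0}{L^2(\pa\Om)}^2+\iota^2\nm{p_0}{H^{1/2}(\pa\Om)}^2)$ and applying~\eqref{eq:multrace} only to the first term. (ii) The lifting estimate should be stated and proved as a self-contained lemma (e.g.\ comparing $\wh q$ with $\chi(d(\boldsymbol x)/\iota)\wt g$ for an $H^1$-extension $\wt g$ of $-p_0|_{\pa\Om}$, using the collar-coordinate estimate $\int_0^\iota\nm{\wt g(\cdot,t)}{L^2(\pa\Om)}^2\,dt\le C\iota\nm{\wt g}{L^2}\nm{\wt g}{H^1}+C\iota^2\nm{\wt g}{H^1}^2$), since the paper provides no machinery for it.
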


\begin{proof}
By~\cite{Danchin:2013}*{Theorem 3.1} and Poincar\'e's inequality, there exists $\boldsymbol v_0\in[H^2(\Om)\cap H_0^1(\Om)]^2$ such that $\divop\boldsymbol v_0=\divop(\boldsymbol u-\boldsymbol u_0)$, and
  \begin{equation}\label{eq:estdiv2}
    \nm{\boldsymbol v_0}{H^1}\le C\nm{\divop(\boldsymbol u-\boldsymbol u_0)}{L^2},\qquad \nm{\boldsymbol v_0}{H^2}\le C\nm{\nabla\divop(\boldsymbol u-\boldsymbol u_0)}{L^2}.
  \end{equation}

Substituting $\boldsymbol v=\boldsymbol v_0$ into~\eqref{eq:aeq}, multiplying the resulting identity by $\lam$, we obtain
\begin{align}
 \nm{p-p_0}{\iota}^2&=-\iota^2(\na p_0,\na(p-p_0))-2\lam\mu\lr{(\boldsymbol\epsilon(\boldsymbol u-\boldsymbol u_0),\boldsymbol\epsilon(\boldsymbol v_0))+\iota^2(\nabla\boldsymbol\epsilon(\boldsymbol u),\nabla\boldsymbol\epsilon(\boldsymbol v_0))}\nn\\
    &\quad+\lam\iota^2\int_{\partial\Omega}(\partial_{\boldsymbol n}\boldsymbol\sigma\boldsymbol n)\cdot\partial_{\boldsymbol n}\boldsymbol v_0\mr{d}\sigma(\boldsymbol x).\label{eq:dividen}
 \end{align}

By the regularity estimate~\eqref{eq:regelas}, the first term may be bounded as
 \[
  \iota^2\abs{(\na p_0,\nabla(p-p_0))}\le\dfrac{\iota^2}{8}\nm{\na(p-p_0)}{L^2}^2+2\iota^2\nm{\nabla p_0}{L^2}^2\le\dfrac18\nm{p-p_0}{\iota}^2+C\iota^2\nm{\boldsymbol f}{L^2}^2.
 \]

Using the triangle inequality and~\eqref{eq:regelas} again, we obtain
\begin{equation}\label{eq:norm-equiv}
\iota\nm{\na\boldsymbol\eps(\boldsymbol u)}{L^2}\le\iota\nm{\na\boldsymbol\eps(\boldsymbol u-\boldsymbol u_0)}{L^2}+\iota\nm{\na\boldsymbol\eps(\boldsymbol u_0)}{L^2}
\le\nm{\na(\boldsymbol u-\boldsymbol u_0)}{\iota}+C\iota\nm{\boldsymbol f}{L^2}.
\end{equation}
Using~\eqref{eq:estdiv2} and the above inequality, we bound the second term as
\begin{align*}
2\lam\mu\abs{(\boldsymbol\epsilon(\boldsymbol u-\boldsymbol u_0),\boldsymbol\epsilon(\boldsymbol v_0))+\iota^2(\nabla\boldsymbol\epsilon(\boldsymbol u),\nabla\boldsymbol\epsilon(\boldsymbol v_0))}
&\le C(\nm{\boldsymbol\epsilon(\boldsymbol u-\boldsymbol u_0)}{L^2}+\iota\nm{\nabla\boldsymbol\epsilon(\boldsymbol u)}{L^2})\nm{p-p_0}{\iota}\\
&\le\dfrac18\nm{p-p_0}{\iota}^2+C\Lr{\nm{\na(\boldsymbol u-\boldsymbol u_0)}{\iota}^2+\iota^2\nm{\boldsymbol f}{L^2}^2}.
\end{align*}

Using~\eqref{eq:beq} and the definition of $\boldsymbol v_0$, we rewrite the boundary term as
 \begin{align*}
\lam\iota^2\int_{\pa\Omega}(\partial_{\boldsymbol n}\boldsymbol\sigma\boldsymbol n)\cdot\partial_{\boldsymbol n}\boldsymbol v_0\mr{d}\sigma(\boldsymbol x)
&=2\lam\mu\iota^2\int_{\pa\Omega}\partial_{\boldsymbol n}\boldsymbol\epsilon(\boldsymbol u):\boldsymbol\epsilon(\boldsymbol v_0)\mr{d}\sigma(\boldsymbol x)+\lambda^2\iota^2\int_{\pa\Omega}\pa_{\boldsymbol n}\divop\boldsymbol u\divop\boldsymbol v_0\mr{d}\sigma(\boldsymbol x)\\
&=2\lam\mu\iota^2\int_{\pa\Omega}\partial_{\boldsymbol n}\boldsymbol\epsilon(\boldsymbol u):\boldsymbol\epsilon(\boldsymbol v_0)\mr{d}\sigma(\boldsymbol x)
+\iota^2\int_{\pa\Omega}\pa_{\boldsymbol n}p(p-p_0)\mr{d}\sigma(\boldsymbol x)\\
&=2\lam\mu\iota^2\int_{\pa\Omega}\partial_{\boldsymbol n}\boldsymbol\epsilon(\boldsymbol u):\boldsymbol\epsilon(\boldsymbol v_0)\mr{d}\sigma(\boldsymbol x)-\iota^2\int_{\pa\Omega}\pa_{\boldsymbol n}p\; p_0\mr{d}\sigma(\boldsymbol x).
 \end{align*}

Recalling the trace inequality~\eqref{eq:multrace}, we estimate the first boundary term as
\begin{align*}
 2\lam\mu\iota^2\biggl|\int_{\partial\Omega}\partial_{\boldsymbol n}\boldsymbol\epsilon(\boldsymbol u):\boldsymbol\epsilon(\boldsymbol v_0)\mr{d}\sigma(\boldsymbol x)\biggr|
&\le 2\lam\mu\iota^2\nm{\pa_{\boldsymbol n}\boldsymbol\eps(\boldsymbol u)}{L^2(\pa\Om)}\nm{\boldsymbol\eps(\boldsymbol v_0)}{L^2(\pa\Om)}\\
&\le C\lam\iota^2\nm{\nabla\boldsymbol\epsilon(\boldsymbol u)}{L^2}^{1/2}\nm{\nabla\boldsymbol\epsilon(\boldsymbol u)}{H^1}^{1/2}\nm{\boldsymbol\epsilon(\boldsymbol v_0)}{L^2}^{1/2}\nm{\boldsymbol\epsilon(\boldsymbol v_0)}{H^1}^{1/2}.
\end{align*}
Using~\eqref{eq:estdiv2}, there exists $C$ independent of $\lam$ and $\iota$ such that
\[
\lam^2\nm{\boldsymbol\epsilon(\boldsymbol v_0)}{L^2}\nm{\boldsymbol\epsilon(\boldsymbol v_0)}{H^1}
\le C\nm{p-p_0}{L^2}\nm{\na(p-p_0)}{L^2}\le C\iota^{-1}\nm{p-p_0}{\iota}^2.
\]
Using~\eqref{eq:norm-equiv} to estimate $\nm{\na\boldsymbol\eps(\boldsymbol u)}{L^2}$ and using~\eqref{eq:h3err} to bound $\nm{\na\boldsymbol\eps(\boldsymbol u)}{H^1}$, we obtain
\begin{align*}
\nm{\nabla\boldsymbol\epsilon(\boldsymbol u)}{L^2}\nm{\nabla\boldsymbol\epsilon(\boldsymbol u)}{H^1}&\le C\iota^{-3}\Lr{\nm{\na(\boldsymbol u-\boldsymbol u_0)}{\iota}+\iota\nm{\boldsymbol f}{L^2}}
\Lr{\nm{\epsilon(\boldsymbol u-\boldsymbol u_0)}{L^2}+\nm{p-p_0}{L^2}}\\
&\le C\iota^{-3}
\lr{\nm{\na(\boldsymbol u-\boldsymbol u_0)}{\iota}^2+\iota^2\nm{\boldsymbol f}{L^2}^2+\Lr{\nm{\na(\boldsymbol u-\boldsymbol u_0)}{\iota}+\iota\nm{\boldsymbol f}{L^2}}\nm{p-p_0}{\iota}}.
\end{align*}
A combination of the above three inequalities gives
\begin{align*}
2\lam\mu\iota^2\biggl|\int_{\partial\Omega}\partial_{\boldsymbol n}\boldsymbol\epsilon(\boldsymbol u):\boldsymbol\epsilon(\boldsymbol v_0)\mr{d}\sigma(\boldsymbol x)\biggr|
\le&C\Bigl((\nm{\na(\boldsymbol u-\boldsymbol u_0)}{\iota}+\iota\nm{\boldsymbol f}{L^2})\nm{p-p_0}{\iota}\\
&\qquad+(\nm{\na(\boldsymbol u-\boldsymbol u_0)}{\iota}+\iota\nm{\boldsymbol f}{L^2})^{1/2}\nm{p-p_0}{\iota}^{3/2}\Bigr)\\
\le&\dfrac18\nm{p-p_0}{\iota}^2+C\Lr{\nm{\na(\boldsymbol u-\boldsymbol u_0)}{\iota}^2+\iota^2\nm{\boldsymbol f}{L^2}^2}. 
\end{align*}

Using the trace inequality~\eqref{eq:multrace} and the regularity estimate~\eqref{eq:regelas} again, we bound 
\[
\iota^2\biggl|\int_{\partial\Omega}\partial_{\boldsymbol n}pp_0\mr{d}\sigma(\boldsymbol x)\biggr|\le C\iota^2\nm{\na p}{L^2}^{1/2}\nm{\na p}{H^1}^{1/2}\nm{p_0}{H^1}\le C\iota^2\nm{\na p}{L^2}^{1/2}\nm{\na p}{H^1}^{1/2}\nm{\boldsymbol f}{L^2}.
\]
Using the triangle inequality and~\eqref{eq:regelas} again, we obtain
\[
  \iota\nm{\nabla p}{L^2}\le\nm{p-p_0}{\iota}+\iota\nm{\nabla p_0}{L^2}\le\nm{p-p_0}{\iota}+C\iota\nm{\boldsymbol f}{L^2},
\]
which together with~\eqref{eq:h3err} implies
\[
  \nm{\nabla p}{L^2}\nm{\na p}{H^1}\le C\iota^{-3}\lr{\nm{p-p_0}{\iota}^2+(\nm{\na(\boldsymbol u-\boldsymbol u_0)}{\iota}+\iota\nm{\boldsymbol f}{L^2})\nm{p-p_0}{\iota}+\iota\nm{\na(\boldsymbol u-\boldsymbol u_0)}{\iota}\nm{\boldsymbol f}{L^2}}.
\]
Combining the above three inequalities, we bound the second boundary term as
\begin{align*}
\iota^2\biggl|\int_{\partial\Omega}\partial_{\boldsymbol n}pp_0\mr{d}\sigma(\boldsymbol x)\biggr|\le&C\Bigl(\iota^{1/2}\nm{p-p_0}{\iota}\nm{\boldsymbol f}{L^2}+(\nm{\na(\boldsymbol u-\boldsymbol u_0)}{\iota}+\iota^{1/2}\nm{\boldsymbol f}{L^2})^{3/2}\nm{p-p_0}{\iota}^{1/2}\\
&\qquad+\iota\nm{\na(\boldsymbol u-\boldsymbol u_0)}{\iota}^{1/2}\nm{\boldsymbol f}{L^2}^{3/2}\Bigr)\\
&\le\dfrac18\nm{p-p_0}{\iota}^2+C\Lr{\nm{\na(\boldsymbol u-\boldsymbol u_0)}{\iota}^2+\iota\nm{\boldsymbol f}{L^2}^2}.
\end{align*}

Substituting the above inequalities into~\eqref{eq:dividen}, we obtain
\[
\nm{p-p_0}{\iota}^2\le\dfrac12\nm{p-p_0}{\iota}^2+C\Lr{\nm{\na(\boldsymbol u-\boldsymbol u_0)}{\iota}^2+\iota\nm{\boldsymbol f}{L^2}^2}.
\]
This immediately gives~\eqref{eq:diverr}.
\end{proof}

We are ready to prove the regularity of problem~\eqref{eq:sgbvp}.
\begin{theorem}\label{thm:reg}
There exists $C$ independent of $\iota$ and $\lambda$ such that
\begin{equation}\label{eq:h1err}
\nm{\boldsymbol u-\boldsymbol u_0}{H^1}+\nm{p-p_0}{L^2}\le C\iota^{1/2}\nm{\boldsymbol f}{L^2},
\end{equation}
and
\begin{equation}\label{eq:h2reg}
\nm{\boldsymbol u}{H^{2+k}}+\nm{p}{H^{1+k}}\le C\iota^{-1/2-k}\nm{\boldsymbol f}{L^2},\qquad k=0,1.
\end{equation}
\end{theorem}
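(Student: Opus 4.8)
The plan is to extract everything from a single $\iota$-weighted energy estimate for $\boldsymbol u-\boldsymbol u_0$, obtained by testing identity~\eqref{eq:aeq} with $\boldsymbol v=\boldsymbol u-\boldsymbol u_0$. This is admissible because $\boldsymbol u\in V$ and, by~\eqref{eq:regelas}, $\boldsymbol u_0\in[H^1_0(\Om)\cap H^2(\Om)]^2$; moreover on $\pa\Om$ one has $\pa_{\boldsymbol n}(\boldsymbol u-\boldsymbol u_0)=-\pa_{\boldsymbol n}\boldsymbol u_0$ and $\boldsymbol n\cdot\pa_{\boldsymbol n}\boldsymbol u_0=\divop\boldsymbol u_0=\lam^{-1}p_0$, the last identity because the tangential derivative of $\boldsymbol u_0$ vanishes on $\pa\Om$. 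Set $\mc{E}^2{:}=\nm{\boldsymbol\eps(\boldsymbol u-\boldsymbol u_0)}{L^2}^2+\iota^2\nm{\na\boldsymbol\eps(\boldsymbol u-\boldsymbol u_0)}{L^2}^2$; by the first Korn inequality~\eqref{eq:1stkorn} and the H$^2$ Korn inequality~\eqref{eq:h2korn}, $\mc{E}$ is equivalent to $\nm{\na(\boldsymbol u-\boldsymbol u_0)}{\iota}$, so in particular $\mc{E}$ controls $\iota\nm{\na^2(\boldsymbol u-\boldsymbol u_0)}{L^2}$. The target is $\mc{E}\le C\iota^{1/2}\nm{\boldsymbol f}{L^2}$.

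For the left-hand side of~\eqref{eq:aeq} with $\boldsymbol v=\boldsymbol u-\boldsymbol u_0$, the $\C$-term is $\ge 2\mu\nm{\boldsymbol\eps(\boldsymbol u-\boldsymbol u_0)}{L^2}^2$, and I would write $\iota^2(\D\na\boldsymbol\eps(\boldsymbol u),\na\boldsymbol\eps(\boldsymbol u-\boldsymbol u_0))=\iota^2(\D\na\boldsymbol\eps(\boldsymbol u),\na\boldsymbol\eps(\boldsymbol u))-\iota^2(\D\na\boldsymbol\eps(\boldsymbol u),\na\boldsymbol\eps(\boldsymbol u_0))$; the first piece is $\ge 2\mu\iota^2\nm{\na\boldsymbol\eps(\boldsymbol u)}{L^2}^2\ge\mu\iota^2\nm{\na\boldsymbol\eps(\boldsymbol u-\boldsymbol u_0)}{L^2}^2-C\iota^2\nm{\boldsymbol f}{L^2}^2$ after inserting $\iota\nm{\na\boldsymbol\eps(\boldsymbol u_0)}{L^2}\le C\iota\nm{\boldsymbol f}{L^2}$ from~\eqref{eq:regelas}. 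The cross term splits into a $\mu$-part, bounded by $(\iota\nm{\na\boldsymbol\eps(\boldsymbol u)}{L^2})(\iota\nm{\na^2\boldsymbol u_0}{L^2})\le C\iota\nm{\boldsymbol f}{L^2}^2$ via~\eqref{eq:estsolu} and~\eqref{eq:regelas}, and a $\lam$-part $\iota^2(\na p,\na\divop\boldsymbol u_0)$, bounded by $C\iota\nm{\boldsymbol f}{L^2}^2$ using $\iota\nm{\na p}{L^2}\le C\nm{\boldsymbol f}{L^2}$ from~\eqref{eq:estsolu} and $\lam\nm{\divop\boldsymbol u_0}{H^1}\le C\nm{\boldsymbol f}{L^2}$. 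Thus the left-hand side is $\ge c\,\mc{E}^2-C\iota\nm{\boldsymbol f}{L^2}^2$.

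The crux — and the step I expect to be the main obstacle — is the boundary term $\iota^2\int_{\pa\Om}(\pa_{\boldsymbol n}\boldsymbol\sigma\boldsymbol n)\cdot\pa_{\boldsymbol n}(\boldsymbol u-\boldsymbol u_0)\dsx$ on the right. Using $\boldsymbol\sigma=2\mu\boldsymbol\eps(\boldsymbol u)+p\boldsymbol I$ and the boundary identities above, it equals $-2\mu\iota^2\int_{\pa\Om}\lr{(\pa_{\boldsymbol n}\boldsymbol\eps(\boldsymbol u))\boldsymbol n}\cdot\pa_{\boldsymbol n}\boldsymbol u_0\dsx-\lam^{-1}\iota^2\int_{\pa\Om}\pa_{\boldsymbol n}p\,p_0\dsx$. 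I would estimate each via the multiplicative trace inequality~\eqref{eq:multrace}: the boundary $L^2$-norms of $\pa_{\boldsymbol n}\boldsymbol u_0$ and of $p_0$ are $\le C\nm{\boldsymbol f}{L^2}$ by~\eqref{eq:regelas}; the interior factors $\nm{\na\boldsymbol\eps(\boldsymbol u)}{L^2}$, $\nm{\na p}{L^2}$ are $\le C\iota^{-1}(\mc{E}+\iota^{1/2}\nm{\boldsymbol f}{L^2})$ by the triangle inequality together with~\eqref{eq:regelas} and~\eqref{eq:diverr}; and the higher factors $\nm{\na\boldsymbol\eps(\boldsymbol u)}{H^1}\le\nm{\boldsymbol u}{H^3}$, $\nm{\na p}{H^1}\le\nm{p}{H^2}$ are $\le C\iota^{-2}(\mc{E}+\iota^{1/2}\nm{\boldsymbol f}{L^2})$ by~\eqref{eq:h3err} and~\eqref{eq:diverr}. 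After substitution, the $\iota^2$ prefactor together with the $\tfrac12$-powers generated by~\eqref{eq:multrace} turns every resulting monomial into one of the form $\iota^{\al}\mc{E}^{\beta}\nm{\boldsymbol f}{L^2}^{2-\beta}$ with $\al>0$ and $0\le\beta\le2$, and since $\iota\le1$ each such monomial is, by Young's inequality, $\le\delta\mc{E}^2+C_{\delta}\iota\nm{\boldsymbol f}{L^2}^2$; verifying that all the $\iota$-exponents are indeed positive is the delicate bookkeeping. Combining with the lower bound and choosing $\delta$ small gives $\mc{E}\le C\iota^{1/2}\nm{\boldsymbol f}{L^2}$.

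It remains to read off the three estimates. Since $\nm{\boldsymbol u-\boldsymbol u_0}{H^1}\le C\nm{\na(\boldsymbol u-\boldsymbol u_0)}{L^2}\le C\mc{E}$ and, by~\eqref{eq:diverr}, $\nm{p-p_0}{L^2}\le\nm{p-p_0}{\iota}\le C(\mc{E}+\iota^{1/2}\nm{\boldsymbol f}{L^2})$, estimate~\eqref{eq:h1err} follows. For $k=1$, inserting~\eqref{eq:h1err} into~\eqref{eq:h3err} gives $\nm{\boldsymbol u}{H^3}+\nm{p}{H^2}\le C\iota^{-2}\cdot\iota^{1/2}\nm{\boldsymbol f}{L^2}$. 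For $k=0$ the key observation is that $\mc{E}\le C\iota^{1/2}\nm{\boldsymbol f}{L^2}$ already controls $\iota\nm{\na^2(\boldsymbol u-\boldsymbol u_0)}{L^2}$, and~\eqref{eq:diverr} similarly controls $\iota\nm{\na(p-p_0)}{L^2}$, whence $\nm{\na^2(\boldsymbol u-\boldsymbol u_0)}{L^2}+\nm{\na(p-p_0)}{L^2}\le C\iota^{-1/2}\nm{\boldsymbol f}{L^2}$; adding the $\iota$-free bounds $\nm{\boldsymbol u_0}{H^2}+\nm{p_0}{H^1}\le C\nm{\boldsymbol f}{L^2}$ from~\eqref{eq:regelas} yields~\eqref{eq:h2reg} with $k=0$. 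Throughout one may assume $\lam\ge1$, since for bounded $\lam$ these are the classical $\iota$-uniform regularity estimates for the fourth-order singular perturbation~\cites{Tai:2001,LiMingWang:2021}, so the only new point is $\lam$-uniformity in the regime $\lam\to\infty$.
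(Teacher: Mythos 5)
Your plan is correct and is essentially the paper's own proof: substitute $\boldsymbol v=\boldsymbol u-\boldsymbol u_0$ in~\eqref{eq:aeq}, obtain a lower bound of the form $c\bigl(\nm{\boldsymbol\eps(\boldsymbol u-\boldsymbol u_0)}{L^2}^2+\iota^2\nm{\na\boldsymbol\eps(\boldsymbol u-\boldsymbol u_0)}{L^2}^2\bigr)-C\iota\nm{\boldsymbol f}{L^2}^2$ for the volume terms via Korn, Young and~\eqref{eq:regelas}, control the boundary integral with the multiplicative trace inequality~\eqref{eq:multrace} combined with the intermediate bounds~\eqref{eq:h3err} and~\eqref{eq:diverr}, absorb, and then read off~\eqref{eq:h1err} and~\eqref{eq:h2reg} exactly as you describe (your splitting $\pa_{\boldsymbol n}\boldsymbol\sigma\boldsymbol n\cdot\pa_{\boldsymbol n}\boldsymbol u_0=2\mu(\pa_{\boldsymbol n}\boldsymbol\eps(\boldsymbol u)\boldsymbol n)\cdot\pa_{\boldsymbol n}\boldsymbol u_0+\lam^{-1}\pa_{\boldsymbol n}p\,p_0$ is the same as the paper's after using $p=\lam\divop\boldsymbol u$). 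One caveat on the bookkeeping rule you state: a monomial $\iota^{\alpha}\mc{E}^{\beta}\nm{\boldsymbol f}{L^2}^{2-\beta}$ (with $0\le\beta<2$) is absorbed as $\delta\mc{E}^2+C_{\delta}\iota\nm{\boldsymbol f}{L^2}^2$ only when $\alpha\ge 1-\beta/2$, not merely when $\alpha>0$; the factor bounds $\nm{\na\boldsymbol\eps(\boldsymbol u)}{L^2}\lesssim\iota^{-1}(\mc{E}+\iota^{1/2}\nm{\boldsymbol f}{L^2})$ and $\nm{\boldsymbol u}{H^3},\nm{p}{H^2}\lesssim\iota^{-2}(\mc{E}+\iota^{1/2}\nm{\boldsymbol f}{L^2})$ that you write down do produce exactly the borderline exponent $\alpha=1-\beta/2$ with a matching $\iota^{1/2}$, so the argument closes, but the criterion as stated is too weak to serve as a general-purpose check.
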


The estimate~\eqref{eq:h1err} improves the known results~\cite{LiMingWang:2021}*{Lemma 1} in two aspects. It clarifies the
fact that the estimate is $\lam-$independent and it gives the convergence rate for the pressure. The rate $\iota^{1/2}$ is optimal even for the scalar counterpart; cf.,~\cite{Tai:2001}*{Lemma 5.1}.

\begin{proof}
Substituting $\boldsymbol v=\boldsymbol u-\boldsymbol u_0$ into~\eqref{eq:aeq}, we get
\[
a(\boldsymbol u-\boldsymbol u_0,\boldsymbol u-\boldsymbol u_0)=-\iota^2(\mb{D}\nabla\boldsymbol\epsilon(\boldsymbol u),\nabla\boldsymbol\epsilon(\boldsymbol u-\boldsymbol u_0))-\iota^2\int_{\partial\Omega}(\partial_{\boldsymbol n}\boldsymbol\sigma\boldsymbol n)\cdot\partial_{\boldsymbol n}\boldsymbol u_0\mr{d}\sigma(\boldsymbol x).
\]

Using the regularity estimate~\eqref{eq:regelas}, we bound the first term as
  \[
    \iota^2\abs{(\mb{D}\nabla\boldsymbol\epsilon(\boldsymbol u),\nabla\boldsymbol\epsilon(\boldsymbol u-\boldsymbol u_0))}\le\dfrac{\iota^2}4(\mb{D}\nabla\boldsymbol\epsilon(\boldsymbol u-\boldsymbol u_0),\nabla\boldsymbol\epsilon(\boldsymbol u-\boldsymbol u_0))+\iota^2(\mb{D}\nabla\boldsymbol\epsilon(\boldsymbol u_0),\nabla\boldsymbol\epsilon(\boldsymbol u_0))
    \le\dfrac{1}4a(\boldsymbol u-\boldsymbol u_0,\boldsymbol u-\boldsymbol u_0)+C\iota^2\nm{\boldsymbol f}{L^2}^2.
  \]

To bound the second term, we let $\boldsymbol v=\boldsymbol u_0$ in~\eqref{eq:beq} and obtain
\[
 \iota^2\int_{\partial\Omega}(\partial_{\boldsymbol n}\boldsymbol\sigma\boldsymbol n)\cdot\partial_{\boldsymbol n}\boldsymbol u_0\mr{d}\sigma(\boldsymbol x)=2\mu\iota^2\int_{\partial\Omega}\partial_{\boldsymbol n}\boldsymbol\epsilon(\boldsymbol u):\boldsymbol\epsilon(\boldsymbol u_0)\mr{d}\sigma(\boldsymbol x)+\iota^2\int_{\partial\Omega}\partial_{\boldsymbol n}\divop\boldsymbol u\; p_0\mr{d}\sigma(\boldsymbol x).
 \]
Invoking the trace inequality, using the fact $\nm{\na\divop\boldsymbol u}{L^2}\le\nm{\na\boldsymbol\eps(\boldsymbol u)}{L^2}$ and~\eqref{eq:regelas}, we obtain
 \begin{align*}
\iota^2\labs{\int_{\partial\Omega}(\partial_{\boldsymbol n}\boldsymbol\sigma\boldsymbol n)\cdot\partial_{\boldsymbol n}\boldsymbol u_0\mr{d}\sigma(\boldsymbol x)}
&\le C\iota^2\Lr{\nm{\nabla\boldsymbol\epsilon(\boldsymbol u)}{L^2}^{1/2}\nm{\nabla\boldsymbol\epsilon(\boldsymbol u)}{H^1}^{1/2}\nm{\boldsymbol\epsilon(\boldsymbol u_0)}{H^1}+\nm{\nabla\divop\boldsymbol u}{L^2}^{1/2}\nm{\nabla\divop\boldsymbol u}{H^1}^{1/2}\nm{p_0}{H^1}}\\
&\le C\iota^2\nm{\nabla\boldsymbol\epsilon(\boldsymbol u)}{L^2}^{1/2}\nm{\boldsymbol u}{H^3}^{1/2}\Lr{\nm{\boldsymbol u_0}{H^2}+\nm{p_0}{H^1}}\\
&\le C\iota^2\nm{\nabla\boldsymbol\epsilon(\boldsymbol u)}{L^2}^{1/2}\nm{\boldsymbol u}{H^3}^{1/2}\nm{\boldsymbol f}{L^2}.
\end{align*}
Substituting~\eqref{eq:diverr} into~\eqref{eq:h3err}, we obtain, 
\[
\nm{\boldsymbol u}{H^3}+\nm{p}{H^2}\le C\iota^{-2}(\nm{\na(\boldsymbol u-\boldsymbol u_0)}{\iota}+\iota^{1/2}\nm{\boldsymbol f}{L^2}).
\]
Invoking~\eqref{eq:norm-equiv} again, we get
\[
\iota^2\nm{\nabla\boldsymbol\epsilon(\boldsymbol u)}{L^2}^{1/2}\nm{\boldsymbol u}{H^3}^{1/2}\le C\iota^{1/2}(\nm{\na(\boldsymbol u-\boldsymbol u_0)}{\iota}+\iota^{1/2}\nm{\boldsymbol f}{L^2}).
\]
A combination of the above three inequalities gives
\[
 \iota^2\labs{\int_{\partial\Omega}(\partial_{\boldsymbol n}\boldsymbol\sigma\boldsymbol n)\cdot\partial_{\boldsymbol n}\boldsymbol u_0\mr{d}\sigma(\boldsymbol x)}
\le\dfrac14\nm{\na(\boldsymbol u-\boldsymbol u_0)}{\iota}^2+C\iota\nm{\boldsymbol f}{L^2}^2.
\]
 
Combining the above inequalities, we obtain
\[
 \nm{\na(\boldsymbol u-\boldsymbol u_0)}{\iota}\le C\iota^{1/2}\nm{\boldsymbol f}{L^2},
 \]
which together with~\eqref{eq:diverr} leads to
\[
\nm{p-p_0}{\iota}\le C\iota^{1/2}\nm{\boldsymbol f}{L^2}.
\]
The above two estimates immediately give~\eqref{eq:h1err} and
\[
\nm{\na^2(\boldsymbol u-\boldsymbol u_0)}{L^2}+\nm{\na(p-p_0)}{L^2}\le C\iota^{-1/2}\nm{\boldsymbol f}{L^2},
\]
which together with~\eqref{eq:regelas} gives~\eqref{eq:h2reg} with $k=0$.

Substituting~\eqref{eq:h1err} into~\eqref{eq:h3err}, we obtain the higher regularity estimate~\eqref{eq:h2reg} with $k=1$.
\end{proof}

A direct consequence of the above theorem is
\begin{coro}
  There exists $C$ independent of $\iota$ and $\lambda$ such that
  \begin{equation}\label{eq:h32reg}
    \nm{\boldsymbol u}{H^{3/2}}+\nm{p}{H^{1/2}}\le C\nm{\boldsymbol f}{L^2},
  \end{equation}
  and
  \begin{equation}\label{eq:h52reg}
    \nm{\boldsymbol u}{H^{5/2}}+\nm{p}{H^{3/2}}\le C\iota^{-1}\nm{\boldsymbol f}{L^2}.
  \end{equation}
\end{coro}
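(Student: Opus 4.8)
The plan is to obtain both bounds purely by interpolating the integer‑order estimates already established in Theorem~\ref{thm:reg}, together with \eqref{eq:estsolu} and \eqref{eq:regelas}, using the standard inequality $\nm{\phi}{H^{s}(\Om)}\le C\nm{\phi}{H^{s_0}(\Om)}^{1-\theta}\nm{\phi}{H^{s_1}(\Om)}^{\theta}$ with $s=(1-\theta)s_0+\theta s_1$ on the smooth domain $\Om$ (which follows from the Sobolev extension theorem and the classical interpolation inequality on $\R^2$).

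For \eqref{eq:h52reg} this is immediate: apply the interpolation inequality with $(s_0,s_1,s)=(2,3,5/2)$ to $\boldsymbol u$ and with $(s_0,s_1,s)=(1,2,3/2)$ to $p$, and insert \eqref{eq:h2reg} for $k=0$ (which gives $\nm{\boldsymbol u}{H^2}+\nm{p}{H^1}\le C\iota^{-1/2}\nm{\boldsymbol f}{L^2}$) and for $k=1$ (which gives $\nm{\boldsymbol u}{H^3}+\nm{p}{H^2}\le C\iota^{-3/2}\nm{\boldsymbol f}{L^2}$). The powers $\iota^{-1/4}$ and $\iota^{-3/4}$ multiply to exactly $\iota^{-1}$.

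For \eqref{eq:h32reg} a direct interpolation between $\nm{\boldsymbol u}{H^1}\le C\nm{\boldsymbol f}{L^2}$ — which follows from \eqref{eq:estsolu} and $\nm{\boldsymbol f}{H^{-1}}\le C\nm{\boldsymbol f}{L^2}$ — and $\nm{\boldsymbol u}{H^2}\le C\iota^{-1/2}\nm{\boldsymbol f}{L^2}$ would only yield $\iota^{-1/4}$, which is not uniform. The fix is to split $\boldsymbol u=(\boldsymbol u-\boldsymbol u_0)+\boldsymbol u_0$ and $p=(p-p_0)+p_0$. The regular part is controlled uniformly by \eqref{eq:regelas}: $\nm{\boldsymbol u_0}{H^{3/2}}+\nm{p_0}{H^{1/2}}\le\nm{\boldsymbol u_0}{H^2}+\nm{p_0}{H^1}\le C\nm{\boldsymbol f}{L^2}$. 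For the difference one combines the small low‑order bound \eqref{eq:h1err}, $\nm{\boldsymbol u-\boldsymbol u_0}{H^1}+\nm{p-p_0}{L^2}\le C\iota^{1/2}\nm{\boldsymbol f}{L^2}$, with the high‑order bound coming from \eqref{eq:h2reg} with $k=0$, \eqref{eq:regelas} and $\iota\le1$, namely $\nm{\boldsymbol u-\boldsymbol u_0}{H^2}+\nm{p-p_0}{H^1}\le C\iota^{-1/2}\nm{\boldsymbol f}{L^2}$; interpolating these two at $\theta=1/2$ gives $\iota^{1/4}\cdot\iota^{-1/4}=\iota^0$, so $\nm{\boldsymbol u-\boldsymbol u_0}{H^{3/2}}+\nm{p-p_0}{H^{1/2}}\le C\nm{\boldsymbol f}{L^2}$, and the triangle inequality closes the argument.

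The argument contains no genuine obstacle. The only points requiring a little care are that the interpolation inequality be quoted in the form valid on the bounded smooth domain $\Om$ (classical), and that in \eqref{eq:h32reg} one works with the difference $\boldsymbol u-\boldsymbol u_0$, $p-p_0$ rather than with $\boldsymbol u$, $p$ directly: it is precisely the smallness of the difference in $H^1$ (order $\iota^{1/2}$) combined with its only moderate growth in $H^2$ (order $\iota^{-1/2}$) that yields a bound uniform in $\iota$, whereas interpolating the crude $H^1$ and $H^2$ bounds on $\boldsymbol u$ alone leaves a residual factor $\iota^{-1/4}$.
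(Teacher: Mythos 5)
Your proof is correct and follows essentially the same route as the paper: for \eqref{eq:h52reg} you interpolate \eqref{eq:h2reg} between $k=0$ and $k=1$, and for \eqref{eq:h32reg} you interpolate the $\iota^{1/2}$ bound \eqref{eq:h1err} on the differences $\boldsymbol u-\boldsymbol u_0$, $p-p_0$ against the $\iota^{-1/2}$ bound obtained from \eqref{eq:h2reg} and \eqref{eq:regelas}, then add back $\boldsymbol u_0$, $p_0$ via \eqref{eq:regelas}. The paper's proof is identical in substance, and your remark about why direct interpolation of bounds on $\boldsymbol u$ alone fails is a correct and helpful observation.
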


\begin{proof}
Using triangle inequality,~\eqref{eq:regelas} and~\eqref{eq:h2reg}, we obtain
\[
\nm{\boldsymbol u-\boldsymbol u_0}{H^2}+\nm{p-p_0}{H^1}\le\nm{\boldsymbol u}{H^2}+\nm{p}{H^1}+\nm{\boldsymbol u_0}{H^2}+\nm{p_0}{H^1}
    \le C\iota^{-1/2}\nm{\boldsymbol f}{L^2}.
\]
Interpolating the above inequality and~\eqref{eq:h1err}, we obtain
\[
    \nm{\boldsymbol u-\boldsymbol u_0}{H^{3/2}}+\nm{p-p_0}{H^{1/2}}\le C\nm{\boldsymbol f}{L^2}.
\]
Using~\eqref{eq:regelas}, we get 
\[
\nm{\boldsymbol u_0}{H^{3/2}}+\nm{p_0}{H^{1/2}}\le C\nm{\boldsymbol u_0}{H^2}+\nm{p_0}{H^1}\le C\nm{\boldsymbol f}{L^2}.
\]
A combination of the above two inequalities and the triangle inequality leads to~\eqref{eq:h32reg}.

Interpolating~\eqref{eq:h2reg} with $k=0$ and~\eqref{eq:h2reg} with $k=1$, we obtain~\eqref{eq:h52reg}.
\end{proof}
\section{A Family of Nonconforming Finite Elements}
We introduce a family of finite elements to approximate the mixed variational problem~\eqref{eq:mix}. Let $\T_h$ be a triangulation of $\Om$ with maximum mesh size $h$. We assume all elements in $\T_h$ are shape-regular in the sense of Ciarlet and Raviart~\cite{Ciarlet:1978}. We also assume that $\mc{T}_h$ satisfies the inverse assumption: there exists $\sigma_0$ such that $h/h_K\le\sigma_0$ for all $K\in\mc{T}_h$. The space of piecewise vector fields is defined by
\[
[H^m(\Omega,\mc{T}_h)]^2{:}=\set{\boldsymbol v\in [L^2(\Omega)]^2}{\boldsymbol v|_K\in[H^m(T)]^2\quad\text{for all\quad}K\in\mc{T}_h},
\]
which is equipped with the broken norm
\[
\nm{\boldsymbol v}{H^m(\Omega,\mc{T}_h)}{:}=\nm{\boldsymbol v}{L^2}+\sum_{k=1}^m\nm{\na^k_h\boldsymbol v}{L^2},
\]
where
\(
\nm{\na^k_h\boldsymbol v}{L^2}^2=\sum_{K\in\mc{T}_h}\nm{\na^k\boldsymbol v}{L^2(K)}^2.
\)
For an interior edge $e$ shared by the triangles $K^+$ and $K^-$, we define the jump of $\boldsymbol v$ across $e$ as
\[
\jump{\boldsymbol v}{:}=\boldsymbol v^+\boldsymbol n^++\boldsymbol v^-\boldsymbol n^-\qquad\text{with}\quad \boldsymbol v^\pm=v|_{K^\pm},
\]
where $\boldsymbol n^\pm$ is the unit normal vector of $e$ towards the outside of $K^\pm$. For $e\cap\pa\Om\not=\emptyset$, we set $\jump{\boldsymbol v}=\boldsymbol v\boldsymbol n$.
\subsection{A family of finite elements}
Our construction is motivated by the element proposed in~\cite{GuzmanLeykekhmanNeilan:2012}. Define the element with a triple $(K,P_K,\Sigma_K)$ by specifying $K$ as a triangle, and 
\begin{equation}\label{eq:PK}
P_K{:}=\mathbb{P}_r(K)+b_K\sum_{i=1}^3b_iQ_i^{r-2}(K)+b_K^2R^{r-2}(K),
\end{equation}
where $b_K=\prod_{i=1}^3\lambda_i$ and $b_i=b_K/\lambda_i$ with $\{\lambda_i\}_{i=1}^3$ the barycentric coordinates of $K$. 

Define
\begin{equation}\label{eq:bubble1}
Q_i^{r-2}(K){:}=\set{v\in\mathbb{P}_{r-2}(K)}{\int_K b_Kb_ivq\dx=0\textup{ for all }q\in\mathbb{P}_{r-3}(K)},
\end{equation}
and
\begin{equation}\label{eq:bubble2}
R^{r-2}(K){:}=\set{v\in\mathbb{P}_{r-2}(K)}{\int_K b_K^2vq\dx=0\textup{ for all }q\in\mathbb{P}_{r-3}(K)}.
\end{equation}
The degrees of freedom (DoFs) for $P_K$ are given by
\begin{equation}\label{eq:dom}
v\mapsto\left\{\begin{aligned}
v(a)&\quad\text{for all vertices}\,a,\\
\int_evq\dsx&\quad\text{for all edges $e$ and\,}q\in\mb{P}_{r-2}(e),\\
\int_e\pa_{\boldsymbol n}vq\dsx&\quad\text{for all edges $e$ and\,}q\in\mb{P}_{r-2}(e),\\
\int_Kvq\dx&\quad\text{for all\,}q\in\mb{P}_{r-2}(K).
\end{aligned}\right.
\end{equation}
We plot the DoFs for $r=2,3$ in Figure~\ref{fig:Diagram}.
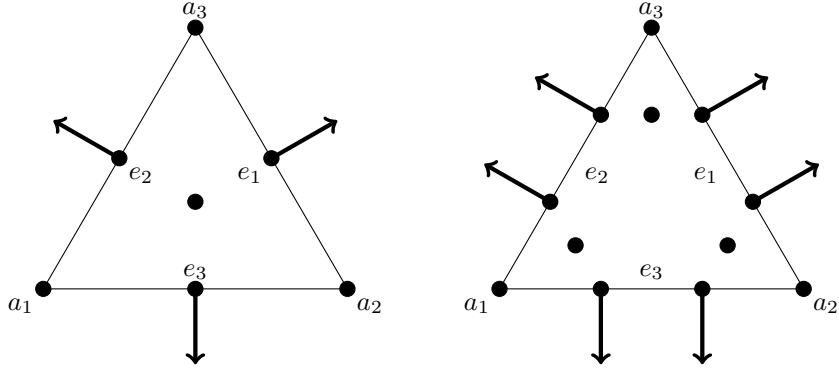
\begin{figure}[htbp]\centering\begin{tikzpicture}
\draw(-2,0)--(2,0)--(0,3.464)--(-2,0);
\draw[fill](-2,0)circle[radius=0.1];
\draw[fill](2,0)circle[radius=0.1];
\draw[fill](0,3.464)circle[radius=0.1];
\draw[fill](1,1.732)circle[radius=0.1];
\draw[fill](-1,1.732)circle[radius=0.1];
\draw[fill](0,0)circle[radius=0.1];
\draw[->][ultra thick](1,1.732)--(1.866,2.232);
\draw[->][ultra thick](-1,1.732)--(-1.866,2.232);
\draw[->][ultra thick](0,0)--(0,-1);
\draw[fill](0,1.155)circle[radius=0.1];
\node[below left]at(-2,0){$a_1$};
\node[below right]at(2,0){$a_2$};
\node[above]at(0,3.47){$a_3$};
\node[below left]at(1,1.732){$e_1$};
\node[below right]at(-1,1.732){$e_2$};
\node[above]at(0,0){$e_3$};
\draw(4,0)--(8,0)--(6,3.464)--(4,0);
\draw[fill](4,0)circle[radius=0.1];
\draw[fill](8,0)circle[radius=0.1];
\draw[fill](6,3.464)circle[radius=0.1];
\draw[fill](7.333,1.155)circle[radius=0.1];
\draw[fill](6.667,2.309)circle[radius=0.1];
\draw[fill](5.333,2.309)circle[radius=0.1];
\draw[fill](4.667,1.155)circle[radius=0.1];
\draw[fill](5.333,0)circle[radius=0.1];
\draw[fill](6.667,0)circle[radius=0.1];
\draw[->][ultra thick](7.333,1.155)--(8.2,1.655);
\draw[->][ultra thick](6.667,2.309)--(7.533,2.809);
\draw[->][ultra thick](5.333,2.309)--(4.467,2.809);
\draw[->][ultra thick](4.667,1.155)--(3.8,1.655);
\draw[->][ultra thick](5.333,0)--(5.333,-1);
\draw[->][ultra thick](6.667,0)--(6.667,-1);
\draw[fill](5,0.577)circle[radius=0.1];
\draw[fill](7,0.577)circle[radius=0.1];
\draw[fill](6,2.309)circle[radius=0.1];
\node[below left]at(4,0){$a_1$};
\node[below right]at(8,0){$a_2$};
\node[above]at(6,3.47){$a_3$};
\node[below left]at(7,1.732){$e_1$};
\node[below right]at(5,1.732){$e_2$};
\node[above]at(6,0){$e_3$};
\end{tikzpicture}\caption{Diagram for DoFs. Left: DoFs for $r=2$ are point evaluations of the function values at the vertex, the mean of the function along each edge, the mean of the normal derivative along each edge, and the mean of the function over the element; Right: DoFs for $r=3$  are point evaluations of the function values at the vertex, the means of the function against $\mb{P}_1$ along each edge, the means of the normal derivative against $\mb{P}_1$ along each edge, and the means of the function against $\mb{P}_1$ over the element}\label{fig:Diagram}
\end{figure}

\begin{lemma}\label{lema:unisolvent}
The set $(K,P_K,\Sigma_K)$ is unisolvent.
\end{lemma}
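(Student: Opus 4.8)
The plan is to establish unisolvence in the classical way, by checking that $\dim P_K=\#\Sigma_K$ and that the only $v\in P_K$ annihilated by every functional of $\Sigma_K$ is $v=0$. I shall in fact prove the slightly stronger statement: writing a generic element of $P_K$ as $v=p+b_K\sum_{i=1}^3 b_iq_i+b_K^2 s$ with $p\in\mb P_r(K)$, $q_i\in Q_i^{r-2}(K)$, $s\in R^{r-2}(K)$, the vanishing of all functionals in $\Sigma_K$ forces $p=0$, $q_1=q_2=q_3=0$ and $s=0$. Applied to $v=0$, this also shows that the tautological surjection $\mb P_r(K)\times\prod_{i}Q_i^{r-2}(K)\times R^{r-2}(K)\to P_K$ is injective, hence $\dim P_K=\dim\mb P_r(K)+3\dim Q_i^{r-2}(K)+\dim R^{r-2}(K)$. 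Since $b_K>0$ on the open triangle $K$, the bilinear forms $(v,w)\mapsto\int_K b_Kb_i\,vw\dx$ and $(v,w)\mapsto\int_K b_K^2\,vw\dx$ are inner products on $\mb P_{r-2}(K)$, so each of $Q_i^{r-2}(K)$ and $R^{r-2}(K)$ is the orthogonal complement of $\mb P_{r-3}(K)$ in $\mb P_{r-2}(K)$ and thus has dimension $\dim\mb P_{r-2}(K)-\dim\mb P_{r-3}(K)=r-1$. Together with the count $\#\Sigma_K=3+6(r-1)+\dim\mb P_{r-2}(K)=\dim\mb P_r(K)+4(r-1)$ (using $\dim\mb P_r(K)-\dim\mb P_{r-2}(K)=2r+1$), this yields $\dim P_K=\#\Sigma_K$.

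For the kernel argument, assume all functionals in $\Sigma_K$ vanish on $v$. Since $b_K$ vanishes on $\pa K$, the entire bubble part vanishes on each edge, so $v|_{e_k}=p|_{e_k}\in\mb P_r(e_k)$; the two vertex values together with the $r-1$ moments against $\mb P_{r-2}(e_k)$ are unisolvent for $\mb P_r(e_k)$ (a degree $r$ polynomial on $e_k$ vanishing at both endpoints equals $\lambda_{k+1}\lambda_{k+2}\tilde g$ with $\tilde g\in\mb P_{r-2}(e_k)$, and testing against $\tilde g$ forces $\tilde g=0$), hence $v|_{e_k}=0$ for $k=1,2,3$ and therefore $p=b_K\tilde p$ with $\tilde p\in\mb P_{r-3}(K)$. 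Next, evaluate the interior-moment functionals against $q\in\mb P_{r-3}(K)$: the contributions of $b_K\sum_i b_iq_i$ and of $b_K^2 s$ drop out by the very definition of $Q_i^{r-2}(K)$ and $R^{r-2}(K)$, leaving $\int_K b_K\,\tilde p\,q\dx=0$ for all $q\in\mb P_{r-3}(K)$; choosing $q=\tilde p$ and using $b_K>0$ gives $\tilde p=0$, so $p=0$ and $v=b_K\bigl(\sum_i b_iq_i+b_K s\bigr)$.

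Now I would bring in the normal-derivative edge functionals. Because the remaining expression carries the overall factor $b_K$, on $e_k$ one has $\pa_{\boldsymbol n}v|_{e_k}=(\pa_{\boldsymbol n}b_K)|_{e_k}\bigl(\sum_i b_iq_i+b_K s\bigr)|_{e_k}$; since $b_i|_{e_k}=0$ for $i\neq k$, $b_K|_{e_k}=0$, $b_k|_{e_k}=(\lambda_{k+1}\lambda_{k+2})|_{e_k}$ and $\pa_{\boldsymbol n}b_K|_{e_k}=(\pa_{\boldsymbol n}\lambda_k)(\lambda_{k+1}\lambda_{k+2})|_{e_k}$ with $\pa_{\boldsymbol n}\lambda_k$ a nonzero constant, this collapses to $\pa_{\boldsymbol n}v|_{e_k}=(\pa_{\boldsymbol n}\lambda_k)(\lambda_{k+1}\lambda_{k+2})^2|_{e_k}\,q_k|_{e_k}$. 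Testing $\int_{e_k}\pa_{\boldsymbol n}v\,q\dsx=0$ against $q=q_k|_{e_k}\in\mb P_{r-2}(e_k)$ and using that $(\lambda_{k+1}\lambda_{k+2})^2$ is positive on the interior of $e_k$ forces $q_k|_{e_k}=0$, i.e. $q_k=\lambda_k q_k'$ with $q_k'\in\mb P_{r-3}(K)$. The crucial point is the identity $b_k\lambda_k=\lambda_1\lambda_2\lambda_3=b_K$ (equivalently $b_i=b_K/\lambda_i$): it turns the defining relation $\int_K b_Kb_k\,q_k\,q\dx=0$ for $q\in\mb P_{r-3}(K)$ into $\int_K b_K^2\,q_k'\,q\dx=0$, and taking $q=q_k'$ gives $q_k'=0$, hence $q_k=0$, for each $k$. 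Finally $v=b_K^2 s$, and evaluating an interior moment against $q=s\in R^{r-2}(K)$ gives $\int_K b_K^2 s^2\dx=0$, so $s=0$ and $v=0$.

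The routine parts are the dimension bookkeeping and the elementary one-dimensional unisolvence on an edge. The step that needs care is the normal-derivative computation: one must keep careful track of which of the three bubble families survive on a given edge, and of the surviving piece of $\pa_{\boldsymbol n}b_K$, and then exploit $b_k\lambda_k=b_K$ so that the orthogonality built into $Q_k^{r-2}(K)$ forces $q_k$ to vanish once its edge trace is known to vanish. No ingredient beyond the positivity of $b_K$ on $K$ and of $\lambda_{k+1}\lambda_{k+2}$ on the open edge $e_k$ is needed.
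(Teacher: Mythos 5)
Your proof is correct and follows essentially the same route as the paper's: use the DoFs and the orthogonality constraints defining $Q_i^{r-2}(K)$ and $R^{r-2}(K)$ to successively kill the polynomial part, the edge-bubble part, and the interior-bubble part, then derive the dimension count from the resulting directness of the sum. The only cosmetic difference is that you spell out the classical unisolvence of the vertex/edge/interior moments for $\mb{P}_r(K)$, whereas the paper invokes it implicitly via a cardinality count.
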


\begin{proof}
Firstly we show that if the DoFs~\eqref{eq:dom} can determine an element in $P_K$, then the element is unique. Suppose $v\in P_K$ vanishes at the DoFs listed in~\eqref{eq:dom}, it suffices to show $v\equiv 0$.
Assume that
\[
v=p_r+b_K\sum_{i=1}^3b_iq_i+b_K^2q_r,
\]
where $p_r\in\mb{P}_r(K)$, and $q_i\in Q_i^{r-2}(K)$, and $q_r\in R^{r-2}(K)$. DoFs associated with 
$\mb{P}_r(K)$ are
\[
v\mapsto\left\{\begin{aligned}
v(a)&\quad\text{for all vertices}\,a,\\
\int_evq\dsx&\quad\text{for all edges $e$ and\,}q\in\mb{P}_{r-2}(e), \\
\int_Kvq\dx&\quad\text{for all\,}q\in\mb{P}_{r-3}(K).
\end{aligned}\right.
\]
The bubble space vanishes on this subset of DoFs by~\eqref{eq:bubble1} and~\eqref{eq:bubble2}. The number of the DoFs is $3+3(r-1)+(r-1)(r-2)/2=(r+1)(r+2)/2$, which equals to the cardinality of $\mb{P}_r(K)$. Hence $p_r\equiv 0$. 

A direct calculation gives
\[
\int_{e_i}\pa_{\boldsymbol n}vq\dsx=-\abs{\na\lam_i}\int_{e_i}b_i^2q_iq\dsx=0\qquad\text{for all }q\in\mb{P}_{r-2}(e_i).
\]
Taking $q=q_i$ in the above identity, we obtain $q_i=0$ on $e_i$. Therefore, we write $q_i=\lam_ip_{r-3}$ for certain $p_{r-3}\in\mb{P}_{r-3}(K)$. Using~\eqref{eq:bubble1}, we get
\[
\int_Kb_Kb_iq_iq\dx=\int_Kb_K^2p_{r-3}q\dx=0\qquad\text{for all }q\in\mb{P}_{r-3}(K).
\]
Taking $q=p_{r-3}$ in the above identity, we obtain $p_{r-3}\equiv 0$. Therefore $v=b_K^2q_r$ for certain $q_r\in R^{r-2}(K)$. The last set of DoFs equals zero, i.e.,
\[
\int_Kb_K^2q_rq\dx=0\qquad\text{for all }q\in\mb{P}_{r-2}(K).
\]
Taking $q=q_r$ in the above identity, we obtain $q_r\equiv 0$. So does $v$. 

It remains to show the dimension of $P_K$ equals the number of DoFs~\eqref{eq:dom}. Proceeding along the same line as above, the element $v\equiv 0$ has a unique representation. Therefore~\eqref{eq:PK} is a direct sum, and
\[
  \text{dim}P_K=\text{dim}\mb{P}_r(K)+4\Lr{\text{dim}\mb{P}_{r-2}(K)-\text{dim}\mb{P}_{r-3}(K)}=\dfrac12(r^2+11r-6),
\]
which equals to the number of DoFs~\eqref{eq:dom} exactly.
\end{proof}
We define a local interpolation operator $\pi_K:H^2(K)\mapsto P_K$ as:
\begin{equation}\label{eq:inter1}
\left\{\begin{aligned}
\pi_K v(a)&=v(a)&&\quad\text{for all vertices\,}a,\\
\int_e\pi_Kvq\dsx&=\int_evq\dsx&&\quad\text{for all edges $e$ and\,}q\in\mb{P}_{r-2}(e),\\
\int_e\pa_{\boldsymbol n}\pi_Kvq\dsx&=\int_e\pa_{\boldsymbol n}vq\dsx&&\quad\text{for all edges $e$ and\,}q\in\mb{P}_{r-2}(e),\\
\int_K\pi_K vq\dx&=\int_Kvq\dx&&\quad\text{for all\,}q\in\mb{P}_{r-2}(K).
\end{aligned}\right.
\end{equation}
\begin{lemma}
There exists $C$ independent of $h_K$ such that for $v\in H^k(K)$ with $2\le k\le r+1$, there holds
\begin{equation}\label{eq:localerr}
\nm{\na^j(v-\pi_Kv)}{L^2(K)}\le Ch_K^{k-j}\nm{\na^kv}{L^2(K)},
\end{equation}
where $0\le j\le k$.
\end{lemma}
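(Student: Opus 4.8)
The plan is to combine the classical affine scaling with the Bramble--Hilbert lemma. Write $F_K(\hat{\boldsymbol x})=B_K\hat{\boldsymbol x}+\boldsymbol b_K$ for the affine map from a fixed reference triangle $\hat K$ onto $K$; shape regularity and the inverse assumption on $\T_h$ give $\|B_K\|\le Ch_K$, $\|B_K^{-1}\|\le Ch_K^{-1}$ and $|\det B_K|\simeq h_K^2$. First I would check that $\pi_K$ is well defined on $H^k(K)$ for $k\ge2$: in two dimensions $H^2(K)\hookrightarrow C(\ov K)$, so the vertex evaluations make sense, and the trace theorem gives $v|_e\in H^{3/2}(e)$ and $\pa_{\boldsymbol n}v|_e\in H^{1/2}(e)$, so the edge and interior moments in~\eqref{eq:inter1} are bounded linear functionals on $H^2(K)$. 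By Lemma~\ref{lema:unisolvent} the function $\pi_Kv\in P_K$ is then uniquely determined, and since $\mb{P}_r(K)\subset P_K$ unisolvence forces $\pi_Kp=p$ for every $p\in\mb{P}_r(K)$, hence in particular for every $p\in\mb{P}_{k-1}(K)$ because $k-1\le r$.

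Next I would reduce everything to $\hat K$. Since $\lam_i\circ F_K=\hat\lam_i$ (barycentric coordinates are affine invariant), the functions $b_K\circ F_K$ and $b_i\circ F_K$ are exactly the corresponding bubbles on $\hat K$, and the weighted $L^2$-orthogonality defining the bubble spaces in~\eqref{eq:bubble1}--\eqref{eq:bubble2} is preserved by the pull-back up to the constant factor $|\det B_K|$; hence $P_K=\{\,\hat p\circ F_K^{-1}\mid\hat p\in P_{\hat K}\,\}$, and the vertex, edge-moment and interior-moment degrees of freedom transform into their reference counterparts times powers of $h_K$ bounded above and below in terms of $\sigma_0$. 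The only genuinely non-affine ingredient is the normal-derivative moment $\int_e\pa_{\boldsymbol n}vq\dsx$, since the physical unit normal is parallel to $B_K^{-T}\hat{\boldsymbol n}$; writing $\pa_{\boldsymbol n}v\circ F_K=\alpha_K\pa_{\hat{\boldsymbol n}}\hat v+\beta_K\pa_{\hat{\boldsymbol t}}\hat v$ with $|\alpha_K|,|\beta_K|$ controlled by $\sigma_0$ and integrating the tangential part by parts along the edge, one expresses it through vertex values and edge moments of $\hat v$ against $\mb{P}_{r-2}$, which are already reference degrees of freedom. I expect this to be the main technical point: establishing that the normal-derivative moments only produce an ``almost affine-equivalent'' family in the sense of~\cite{Ciarlet:1978}, so that the pulled-back map $\hat\pi_K\colon\hat v\mapsto(\pi_Kv)\circ F_K$, although $K$-dependent, is a projection onto $P_{\hat K}$ that is uniformly bounded from $H^2(\hat K)$ and reproduces $\mb{P}_{k-1}(\hat K)$, with constants depending only on $r$ and $\sigma_0$.

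Finally, on $\hat K$ the operator $I-\hat\pi_K$ annihilates $\mb{P}_{k-1}(\hat K)$ and maps $H^k(\hat K)$ boundedly into $H^j(\hat K)$ (the range is finite dimensional and $H^k(\hat K)\hookrightarrow H^2(\hat K)$ for $k\ge2$), so writing $\hat v-\hat\pi_K\hat v=(I-\hat\pi_K)(\hat v-\hat p)$ for an arbitrary $\hat p\in\mb{P}_{k-1}(\hat K)$ and applying the Deny--Lions/Bramble--Hilbert lemma gives
\[
\nm{\hat\na^j(\hat v-\hat\pi_K\hat v)}{L^2(\hat K)}\le C\inf_{\hat p\in\mb{P}_{k-1}(\hat K)}\nm{\hat v-\hat p}{H^k(\hat K)}\le C\nm{\hat\na^k\hat v}{L^2(\hat K)}.
\]
Transforming back through the usual scaling relations $\nm{\na^jw}{L^2(K)}\simeq h_K^{1-j}\nm{\hat\na^j\hat w}{L^2(\hat K)}$ applied to $w=v-\pi_Kv$ and $\nm{\hat\na^k\hat v}{L^2(\hat K)}\simeq h_K^{k-1}\nm{\na^kv}{L^2(K)}$ then produces the bound~\eqref{eq:localerr}.
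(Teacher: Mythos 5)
Your proof is correct and follows the same underlying strategy as the paper: establish that $\pi_K$ reproduces $\mb{P}_r(K)$ and then appeal to the scaling plus Bramble--Hilbert argument of Ciarlet--Raviart. The paper itself compresses this into a one-line citation; your version usefully makes explicit the one point that citation quietly absorbs, namely that the normal-derivative moments prevent the element from being affine-equivalent, so the pulled-back interpolant $\hat\pi_K$ is $K$-dependent and one must verify uniform $H^2(\hat K)$-boundedness of the family (the ``almost affine'' situation in Ciarlet's sense), which you do correctly via the decomposition $\pa_{\boldsymbol n}v\circ F_K=\alpha_K\pa_{\hat{\boldsymbol n}}\hat v+\beta_K\pa_{\hat{\boldsymbol t}}\hat v$ and tangential integration by parts.
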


\begin{proof}
For any $v\in\mb{P}_r(K)\subset P_K$, the definition~\eqref{eq:inter1} shows that $v-\pi_Kv\in P_K$ and all DoFs of $v-P_K v$ vanish, then $v=\pi_Kv$. The estimate~\eqref{eq:localerr} immediately follows from the $\mb{P}_r(K)-$invariance of the local interpolation operator $\pi_K$~\cite{CiarletRaviart:1972}.
\end{proof}

\begin{remark}\label{remark:3d}
The element has a natural extension to three-dimensions by specifying $K$ as a tetrahedron, and
\[
P_K{:}=\mathbb{P}_r(K)+ b_K\sum_{i=1}^4b_iQ_i^{r-2}(K)+ b_K^2R^{r-2}(K),
\]
where $b_K=\prod_{i=1}^4\lam_i$ is the element bubble function with $\lambda_i$ the barycentric coordinates associated with the vertices $a_i$ for $i=1,\cdots,4$. $b_i=b_K/\lambda_i$ is the face bubble function associated with the face $f_i$. 

Define
\[Q_i^{r-2}(K){:}=\set{v\in\mathbb{P}_{r-2}(K)}{\int_K b_Kb_ivq\dx=0\textup{ for all }q\in\mathbb{P}_{r-3}(K)},\]
and
\[R^{r-2}(K){:}=\set{v\in\mathbb{P}_{r-2}(K)}{\int_K b_K^2vq\dx=0\textup{ for all }q\in\mathbb{P}_{r-4}(K)}.\]
The DoFs for $P_K$ are given by
\[
v\mapsto\left\{\begin{aligned}
v(a)&\quad\text{for all vertices},\\
\int_evq\dsx&\quad\text{for all edges $e$ and\,}q\in\mb{P}_{r-2}(e),\\
\int_fvq\dsx&\quad\text{for all faces $f$ and\,}q\in\mb{P}_{r-3}(f),\\
\int_f\pa_{\boldsymbol n}vq\dsx&\quad\text{for all faces $f$ and\,}q\in\mb{P}_{r-2}(f),\\
\int_Kvq\dx&\quad\text{for all\,}q\in\mb{P}_{r-2}(K).
\end{aligned}\right.
\]
Similar to Lemma~\ref{lema:unisolvent}, the set $(K,P_K,\Sigma_K)$ is also unisolvent.
\end{remark}
\subsection{Explicit representation for the bubble space}
We clarify the structures of~\eqref{eq:bubble1} and~\eqref{eq:bubble2} associated with the set of DoFs~\eqref{eq:dom}$_3$ and the subset of~\eqref{eq:dom}$_4$ respectively, and derive the explicit formulations of the corresponding shape functions, which seems missing in the literature, while such explicit representations are useful for implementation. We firstly recall the following facts about the Jacobi polynomials~\cite{Szego:1975}. For any $\alpha,\beta>-1$ and nonnegative integers $n,m$, there holds
\begin{equation}\label{eq:orthJacobi1}
\int_{-1}^1(1-t)^{\alpha}(1+t)^{\beta}P_n^{(\alpha,\beta)}(t)P_m^{(\alpha,\beta)}(t)\md t=h_n^{(\alpha,\beta)}\delta_{nm},
\end{equation}
where
\[
h_n^{(\alpha,\beta)}=\int_{-1}^1(1-t)^{\alpha}(1+t)^{\beta}\bigl[P_n^{(\alpha,\beta)}(t)\bigr]^2\md t.
\]
By~\cite{Szego:1975}*{Eq. (4.3.3)}, we may write
\begin{equation}\label{eq:squaremoment1}
h_n^{(\alpha,\beta)}=\dfrac{2^{\alpha+\beta+1}}{2n+\alpha+\beta+1}\dfrac{\Gamma(n+\alpha+1)\Gamma(n+\beta+1)}
{\Gamma(n+\alpha+\beta+1)\Gamma(n+1)},
\end{equation}
where $\Gamma$ is the Gamma function.

One of the explicit form for $P_n^{(\alpha,\beta)}$ is
\[(1- t)^{\alpha}(1+ t)^{\beta}P_n^{(\alpha,\beta)}(t)=\dfrac{(-1)^n}{2^n n!}\dfrac{\mathrm{d}^n}{\md t^n}
\Lr{(1- t)^{n+\alpha}(1+ t)^{n+\beta}}.\]
In particular,
\[
P_0^{(\alpha,\beta)}(t)=1,\quad P_1^{(\alpha,\beta)}( t)=\dfrac12(\alpha+\beta+2) t+\dfrac12(\alpha-\beta).
\]

Next we list certain facts about the Jacobi polynomials on the triangle~\cite{Dunkl:2014}*{Section 2.4}. For a triangle $K$ with vertices $a_1,a_2,a_3$, any point $x\in K$ is uniquely expressed as
\[x=\lam_1a_1+\lam_2a_2+\lam_3a_3, \quad \lam_i\ge 0 \text{ and }\lam_1+\lam_2+\lam_3=1.\]
Then $(\lam_1,\lam_2,\lam_3)$ is the barycentric coordinates of the point $x$ with respect to $K$. For nonnegative integers $k,n$ such that $k\le n$, we define
\begin{equation}\label{eq:Jacobi2}
P_{k,n}^{(\alpha,\beta,\gamma)}(\lam_1,\lam_2,\lam_3){:}=(\lam_2+\lam_3)^kP_{n-k}^{(2k+\beta+\gamma+1,\alpha)}(\lam_1-\lam_2-\lam_3)P_k^{(\gamma,\beta)}((\lam_2-\lam_3)/(\lam_2+\lam_3)).
\end{equation}
It is straightforward to verify $P_{k,n}^{(\alpha,\beta,\gamma)}\in \mb{P}_n(K)$. In particular,
\[
P_{0,0}^{(\alpha,\beta,\gamma)}(\lam_1,\lam_2,\lam_3)=1,\quad P_{0,1}^{(\alpha,\beta,\gamma)}(\lam_1,\lam_2,\lam_3)=(\beta+\gamma+2)\lam_1-(\alpha+1)(\lam_2+\lam_3),
\]
and
\[P_{1,1}^{(\alpha,\beta,\gamma)}(\lam_1,\lam_2,\lam_3)=(\gamma+1)\lam_2-(\beta+1)\lam_3.\]
For all $\alpha,\beta,\gamma>-1$, and nonnegative integers $j,k,m,n$ such that $j\le m$ and $k\le n$, there holds
\begin{equation}\label{eq:orthJacobi2}\begin{aligned}
&\negint_K\lam_1^\alpha \lam_2^\beta\lam_3^\gamma P_{k,n}^{(\alpha,\beta,\gamma)}(\lam_1,\lam_2,\lam_3)P_{j,m}^{(\alpha,\beta,\gamma)}(\lam_1,\lam_2,\lam_3)\dx\\
=&2\int_{\wh{K}}\lam_1^\alpha\lam_2^\beta(1-\lam_1-\lam_2)^\gamma P_{k,n}^{(\alpha,\beta,\gamma)}(\lam_1,\lam_2,1-\lam_1-\lam_2)P_{j,m}^{(\alpha,\beta,\gamma)}(\lam_1,\lam_2,1-\lam_1-\lam_2)\md\lam_1\md\lam_2\\
=&2h_{k,n}^{(\alpha,\beta,\gamma)}\delta_{jk}\delta_{mn},
\end{aligned}\end{equation}
where $\wh{K}:=\set{(\lam_1,\lam_2)}{\lam_1\ge 0,\lam_2\ge 0,\lam_1+\lam_2\le 1}$ is the standard reference triangle and
\begin{equation}\label{eq:squaremoment2}\begin{aligned}
h_{k,n}^{(\alpha,\beta,\gamma)}=&2^{-(2k+\alpha+2\beta+2\gamma+3)}h_{n-k}^{(2k+\beta+\gamma+1,\alpha)}h_k^{(\gamma,\beta)}\\
=&\dfrac1{(2n+\alpha+\beta+\gamma+2)(2k+\beta+\gamma+1)}\\
&\times\dfrac{\Gamma(n-k+\alpha+1)\Gamma(n+k+\beta+\gamma+2)\Gamma(k+\beta+1)\Gamma(k+\gamma+1)}{\Gamma(n-k+1)\Gamma(n+k+\alpha+\beta+\gamma+2)\Gamma(k+1)\Gamma(k+\beta+\gamma+1)}.
\end{aligned}\end{equation}
Using the notation $(x)_n=\Gamma(x+n)/\Gamma(n)$, we may find that the expression~\eqref{eq:squaremoment2} is equivalent to the one in~\cite{Dunkl:2014}*{Eq. (2.4.3)}. The identity~\eqref{eq:orthJacobi2} illustrates that $\{P_{k,n}^{(\alpha,\beta,\gamma)}(\lam_1,\lam_2,\lam_3)\mid 0\le k\le n\le r\}$ are mutually orthogonal bases of $\mb{P}_r(K)$ with respect to the weight $\lam_1^\alpha\lam_2^\beta\lam_3^\gamma$. 

Next we study the structure of the bubble spaces.  For the barycentric coordinate function $\lam_i$ such that $\lambda_i\equiv0$ on $e_i$, let $\lambda_i^+$ and $\lambda_i^-$  be the two other barycentric coordinates associated with the edges $e_i^+$ and $e_i^-$, respectively. $(e_i,e_i^+,e_i^-)$ are chosen in a counterclockwise manner. The space $Q_i^{r-2}(K)$ can be clarified by the Jacobi polynomials with respect to the weight $b_Kb_i$, while $R^{r-2}(K)$ can be clarified by the Jacobi polynomials with respect to the weight $b_K^2$, which are formulated in the following lemmas.
%
\begin{lemma}
The space $Q_i^{r-2}(K)$ takes the form
\[
Q_i^{r-2}(K)=\textup{span}\set{P_{k,r-2}^{(1,2,2)}(\lambda_i,\lambda_i^+,\lambda_i^-)}{0\le k\le r-2}.
\]
\end{lemma}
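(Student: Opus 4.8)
The plan is to identify $Q_i^{r-2}(K)$, which is by definition the orthogonal complement in $\mb{P}_{r-2}(K)$ of $\mb{P}_{r-3}(K)$ with respect to the weighted inner product $(\cdot,\cdot)_{b_Kb_i}$, with the span of the top-degree Jacobi polynomials on the triangle for a suitable weight. The key observation is that $b_Kb_i=\lam_1\lam_2\lam_3\cdot(\lam_1\lam_2\lam_3/\lam_i)=\lam_i^2\lam_i^+\,\lam_i^-\cdot(\text{lower order in }\lam_i)$; more precisely, after relabelling the barycentric coordinates so that $\lam_i$ plays the role of the first coordinate, $b_Kb_i=\lam_i^{2}(\lam_i^+)^{2}(\lam_i^-)^{2}/\lam_i$... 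I should be careful here: $b_K=\lam_1\lam_2\lam_3$ and $b_i=b_K/\lam_i=\lam_i^+\lam_i^-$ (the product of the two barycentric coordinates other than $\lam_i$), so $b_Kb_i=\lam_i(\lam_i^+)^2(\lam_i^-)^2$. Hence the weight is $\lam_i^{1}(\lam_i^+)^{2}(\lam_i^-)^{2}$, which matches the exponents $(\alpha,\beta,\gamma)=(1,2,2)$ in the ordering $(\lam_i,\lam_i^+,\lam_i^-)$.

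Concretely, I would first rewrite the defining constraint~\eqref{eq:bubble1} as: $v\in\mb{P}_{r-2}(K)$ lies in $Q_i^{r-2}(K)$ iff $\int_K \lam_i(\lam_i^+)^2(\lam_i^-)^2\, v\,q\dx=0$ for all $q\in\mb{P}_{r-3}(K)$, i.e., $v$ is orthogonal to $\mb{P}_{r-3}(K)$ under the inner product with weight $\lam_i^1(\lam_i^+)^2(\lam_i^-)^2$. Then I would invoke the orthogonality relation~\eqref{eq:orthJacobi2} with $(\alpha,\beta,\gamma)=(1,2,2)$ and barycentric coordinates taken in the order $(\lam_i,\lam_i^+,\lam_i^-)$: the family $\{P_{k,n}^{(1,2,2)}(\lam_i,\lam_i^+,\lam_i^-)\mid 0\le k\le n\le r-2\}$ is an orthogonal basis of $\mb{P}_{r-2}(K)$ for this weighted inner product, and those with $n\le r-3$ form an orthogonal basis of $\mb{P}_{r-3}(K)$. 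Consequently the orthogonal complement of $\mb{P}_{r-3}(K)$ inside $\mb{P}_{r-2}(K)$ is exactly the span of the $n=r-2$ polynomials, namely $\{P_{k,r-2}^{(1,2,2)}(\lam_i,\lam_i^+,\lam_i^-)\mid 0\le k\le r-2\}$. A dimension count ($r-1$ functions, matching $\dim\mb{P}_{r-2}(K)-\dim\mb{P}_{r-3}(K)=r-1$) confirms there is no further collapse, so equality of the two spaces follows.

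The only genuine subtlety — and the step I would treat most carefully — is bookkeeping with the weight and the relabelling of coordinates: one must check that the counterclockwise choice $(e_i,e_i^+,e_i^-)$ and the assignment $\alpha\leftrightarrow\lam_i$, $\beta\leftrightarrow\lam_i^+$, $\gamma\leftrightarrow\lam_i^-$ is the one consistent with~\eqref{eq:Jacobi2} and~\eqref{eq:orthJacobi2}, and that $b_Kb_i$ indeed equals $\lam_i(\lam_i^+)^2(\lam_i^-)^2$ rather than some permutation of the exponents (the symmetry $\beta\leftrightarrow\gamma$ here is harmless since both equal $2$, which is a small mercy). Once the weight identification is pinned down, the argument is a direct application of the orthogonal-basis property~\eqref{eq:orthJacobi2}, so I expect no further obstacle; everything else is routine.
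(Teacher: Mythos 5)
Your proposal is correct and follows the same route as the paper: identify the weight $b_Kb_i=\lam_i(\lam_i^+)^2(\lam_i^-)^2$ as the $(\alpha,\beta,\gamma)=(1,2,2)$ Jacobi weight in the coordinates $(\lam_i,\lam_i^+,\lam_i^-)$, expand $v\in\mb{P}_{r-2}(K)$ in the corresponding orthogonal basis, and read off from~\eqref{eq:orthJacobi2} that the constraint in~\eqref{eq:bubble1} kills precisely the coefficients with $n\le r-3$. The added dimension count is a harmless extra confirmation that the paper omits.
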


\begin{proof}
For any $v\in Q_i^{r-2}(K)\subset\mathbb{P}_{r-2}(K)$, $v$ may be expanded into 
\[
v=\sum_{0\le k\le n\le r-2}a_{kn}P_{k,n}^{(1,2,2)}(\lambda_i,\lambda_i^+,\lambda_i^-)
\]
with unknown parameters $a_{kn}$. Using the above representation, we may write the constraint in the definition~\eqref{eq:bubble1} as
\[
\sum_{0\le k\le n\le r-2}a_{kn}\negint_Kb_Kb_iP_{k,n}^{(1,2,2)}(\lambda_i,\lambda_i^+,\lambda_i^-)q\mathrm{d}x=0\qquad\text{for all\quad}q\in\mathbb{P}_{r-3}(K).
\]
Substituting $q=P_{j,m}^{(1,2,2)}(\lambda_i,\lambda_i^+,\lambda_i^-)$ for $0\le j\le m\le r-3$ into the above equation, and using the orthogonal relation~\eqref{eq:orthJacobi2}, we obtain $a_{jm}=0$ for $0\le j\le m\le r-3$. This concludes the lemma.
\end{proof}

Motivated by the above lemma, we change the definition of DoFs for the bubble space $b_K\sum_{i=1}^3b_iQ_i^{r-2}(K)$ from $\int_{e_i}\pa_{\boldsymbol n}vq\dsx$ for any $q\in\mb{P}_{r-2}(e_i)$ to
\[
\negint_{e_i}\diff{v}{n}P_k^{(2,2)}(\lambda_i^+-\lambda^-)\dsx, \quad k=0,\cdots, r-2.
\]

\begin{lemma}
The shape functions for the bubble space $b_K\sum_{i=1}^3b_iQ_i^{r-2}(K)$ associated with the above definition of DoFs are
\[
a_{k,r-2}b_Kb_iP_{k,r-2}^{(1,2,2)}(\lambda_i,\lambda_i^+,\lambda_i^-)
\]
with 
\begin{equation}\label{eq:ak}
a_{k,r-2}=\dfrac{(-1)^{r-k-1}}{\abs{\na\lam_i}}\dfrac{(k+3)(k+4)(2k+5)}{(r-k-1)(k+1)(k+2)}\qquad k=0,\cdots,r-2.
\end{equation}
\end{lemma}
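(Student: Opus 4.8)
The plan is to diagonalise the modified degrees of freedom $\ell_k(v){:}=\negint_{e_i}\pa_{\boldsymbol n}v\,P_k^{(2,2)}(\lam_i^+-\lam_i^-)\dsx$, $k=0,\dots,r-2$, on the explicit basis of the bubble space supplied by the previous lemma, namely on the functions $b_Kb_jP_{m,r-2}^{(1,2,2)}(\lam_j,\lam_j^+,\lam_j^-)$ with $1\le j\le 3$ and $0\le m\le r-2$. First I would observe that $\ell_k$, which only sees $\pa_{\boldsymbol n}$ on $e_i$, annihilates every bubble attached to another edge: for $j\ne i$ the product $b_Kb_j=\lam_j(\lam_j^+\lam_j^-)^2$ contains $\lam_i^2$ as a factor, hence vanishes to second order on $e_i$, so $\pa_{\boldsymbol n}(b_Kb_jP)\big|_{e_i}=0$. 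It then remains to show that, within the block attached to $e_i$, $\ell_j(b_Kb_iP_{k,r-2}^{(1,2,2)})$ vanishes for $j\ne k$, to evaluate it for $j=k$, and to read off $a_{k,r-2}=\bigl[\ell_k(b_Kb_iP_{k,r-2}^{(1,2,2)})\bigr]^{-1}$.

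For the trace of the normal derivative on $e_i$, I would use that $b_Kb_i=\lam_ib_i^2$ with $b_i=\lam_i^+\lam_i^-$ and $\lam_i\equiv 0$ on $e_i$, so the product rule yields $\pa_{\boldsymbol n}(b_Kb_iP)\big|_{e_i}=(\pa_{\boldsymbol n}\lam_i)\,b_i^2P\big|_{e_i}=-\abs{\na\lam_i}\,(\lam_i^+\lam_i^-)^2P\big|_{e_i}$, the sign following because the constant vector $\na\lam_i$ points into $K$ across $e_i$. Restricting $P_{k,r-2}^{(1,2,2)}(\lam_i,\lam_i^+,\lam_i^-)$ to $e_i$ by setting $\lam_i=0$, $\lam_i^++\lam_i^-=1$ in~\eqref{eq:Jacobi2} collapses the prefactor $(\lam_i^++\lam_i^-)^k$ to $1$ and gives $P_{k,r-2}^{(1,2,2)}\big|_{e_i}=P_{r-2-k}^{(2k+5,1)}(-1)\,P_k^{(2,2)}(\lam_i^+-\lam_i^-)$; the endpoint value $P_{r-2-k}^{(2k+5,1)}(-1)=(-1)^{r-2-k}(r-1-k)$ is the standard Jacobi formula $P_m^{(\al,\beta)}(-1)=(-1)^m\binom{m+\beta}{m}$.

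Parametrising $e_i$ by $s{:}=\lam_i^+-\lam_i^-\in[-1,1]$, so that $\lam_i^\pm=(1\pm s)/2$, $\lam_i^+\lam_i^-=(1-s^2)/4$ and $\negint_{e_i}g\dsx=\tfrac12\int_{-1}^1 g\,\md s$, the number $\ell_j(b_Kb_iP_{k,r-2}^{(1,2,2)})$ reduces to $-\tfrac1{32}\abs{\na\lam_i}(-1)^{r-2-k}(r-1-k)$ times $\int_{-1}^1(1-s)^2(1+s)^2P_k^{(2,2)}(s)P_j^{(2,2)}(s)\,\md s$, and the latter equals $h_k^{(2,2)}\delta_{jk}$ by the Jacobi orthogonality~\eqref{eq:orthJacobi1} with $\al=\beta=2$. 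This proves the diagonality, and inverting $\ell_k(b_Kb_iP_{k,r-2}^{(1,2,2)})=-\tfrac1{32}\abs{\na\lam_i}(-1)^{r-2-k}(r-1-k)\,h_k^{(2,2)}$ while substituting the closed form $h_k^{(2,2)}=32(k+1)(k+2)\big/\bigl[(2k+5)(k+3)(k+4)\bigr]$, read off from~\eqref{eq:squaremoment1} with $n=k$, $\al=\beta=2$, makes the factor $32$ cancel and delivers exactly~\eqref{eq:ak} (the sign $(-1)^{r-k-1}$ arising as $-(-1)^{r-2-k}$).

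The step I expect to be most delicate is the combined sign and index bookkeeping, together with the correct specialisation of the triangular Jacobi polynomial to an edge: one must match the counterclockwise convention for $(e_i,e_i^+,e_i^-)$ with the roles of $\lam_2,\lam_3$ in~\eqref{eq:Jacobi2}, fix the sign of $\pa_{\boldsymbol n}\lam_i$, and correctly track the powers of $-1$ that surface at the successive stages. By contrast, once the edge weight $(1-s^2)^2$ is recognised as $(1-s)^2(1+s)^2$, i.e.\ the Jacobi weight attached to $P_k^{(2,2)}$, the orthogonality computation and the simplification of $h_k^{(2,2)}$ are routine.
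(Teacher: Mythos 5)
Your argument is correct and follows the same route as the paper's: compute the trace of $\partial_{\boldsymbol n}(b_Kb_iP_{k,r-2}^{(1,2,2)})$ on $e_i$ using the fact $b_Kb_i=\lam_ib_i^2$ and $\partial_{\boldsymbol n}\lam_i=-\abs{\na\lam_i}$, specialise the triangular Jacobi polynomial~\eqref{eq:Jacobi2} to the edge to factor out $P_{r-2-k}^{(2k+5,1)}(-1)P_k^{(2,2)}(\lam_i^+-\lam_i^-)$, reduce the edge integral to the one-dimensional orthogonality~\eqref{eq:orthJacobi1} with weight $(1-t)^2(1+t)^2$, and then invert the resulting diagonal relation using the closed forms for $P_m^{(\al,1)}(-1)$ and $h_k^{(2,2)}$ from~\eqref{eq:squaremoment1}. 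The only differences are cosmetic (you parametrise $e_i$ by $s=\lam_i^+-\lam_i^-$ rather than by $\lam_i^+$) or supplementary: you spell out that bubbles attached to other edges are annihilated by $\ell_j$ because $b_Kb_j$ for $j\ne i$ vanishes to second order on $e_i$, a point the paper leaves implicit. Your sign bookkeeping ($(-1)^{r-2-k}=(-1)^{r-k}$, and the extra $-1$ from $\partial_{\boldsymbol n}\lam_i$ producing $(-1)^{r-k-1}$) agrees with~\eqref{eq:ak}.
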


\begin{proof}
A direct calculation gives
\[
\diff{}{\boldsymbol n}\Lr{b_Kb_iP_{k,r-2}^{(1,2,2)}(\lambda_i,\lambda_i^+,\lambda_i^-)}|_{e_i}
=-\abs{\nabla\lambda_i}b_i^2P_{r-2-k}^{(2k+5,1)}(-1)P_k^{(2,2)}(\lambda_i^+-\lambda^-),
\]
For $j=1,\cdots,r-2$. Using the relation~\eqref{eq:orthJacobi1}, and noting that $b_i=\lam_i^+\lam_i^{-}
=\lam_i^+(1-\lam_i^+)$ on $e_i$, we obtain
\begin{align*}
&\negint_{e_i}\diff{}{\boldsymbol n}\Lr{b_Kb_iP_{k,r-2}^{(1,2,2)}(\lambda_i,\lambda_i^+,\lambda_i^-)}P_j^{(2,2)}(\lambda_i^+-\lambda^-)\dsx\\
=&-\abs{\nabla\lambda_i}P_{r-2-k}^{(2k+5,1)}(-1)\int_0^1\Lr{\lam_i^+(1-\lam_i^+)}^2P_k^{(2,2)}(2\lambda_i^+-1)P_j^{(2,2)}(2\lambda_i^+-1)\mathrm{d}\lambda_i^+\\
=&-\dfrac{\abs{\nabla\lambda_i}}{32}P_{r-2-k}^{(2k+5,1)}(-1)\int_{-1}^1(1-t)^2(1+t)^2P_k^{(2,2)}(t)P_j^{(2,2)}(t)\md t\\
=&-\dfrac{\abs{\nabla\lambda_i}}{32}P_{r-2-k}^{(2k+5,1)}(-1)h_k^{(2,2)}\delta_{jk}.
\end{align*}
This gives 
\[
a_{k,r-2}=-\dfrac{32}{\abs{\nabla\lambda_i}P_{r-2-k}^{(2k+5,1)}(-1)h_k^{(2,2)}}.
\]

By~\cite{Szego:1975}*{Eq. (4.1.1),(4.1.3)}, we obtain
\(
P_{r-2-k}^{(2k+5,1)}(-1)=(-1)^{r-k}(r-k-1).
\)
Using~\eqref{eq:squaremoment1}, we obtain
\[
h_k^{(2,2)}=\dfrac{32(k+1)(k+2)}{(k+3)(k+4)(2k+5)}.
\]
A combination of the above three identities leads to~\eqref{eq:ak}.
\end{proof}

Next we list the shape functions for the elements of low-order.
\begin{example}
The bubble space $b_K\sum_{i=1}^3b_iQ_i^{r-2}(K)$ for the lowest-order $r=2$ is
\[
b_K\textup{span}\set{b_i}{i=1,2,3}.
\]
The shape functions associated with $\negint_{e_i}\partial_{\boldsymbol n}v\dsx$ is
\(-\dfrac{30}{\abs{\nabla\lambda_i}}b_Kb_i.\)

The bubble space $b_K\sum_{i=1}^3b_iQ_i^{r-2}(K)$ for the case $r=3$ is
\[b_K\textup{span}\set{b_i(3\lambda_i-\lambda_i^+-\lambda_i^-),b_i(\lambda_i^+-\lambda_i^-)}{i=1,2,3}.\]
The shape functions associated with $\negint_{e_i}\partial_{\boldsymbol n}v\dsx$ is
\[
\dfrac{30}{\abs{\nabla\lambda_i}}b_Kb_i(3\lambda_i-\lambda_i^+-\lambda_i^-).
\]
The shape functions associated with $3\negint_{e_i}\partial_{\boldsymbol n}v(\lambda_i^+-\lambda_i^-)\dsx$ is
\[
-\dfrac{70}{\abs{\nabla\lambda_i}}b_Kb_i(\lambda_i^+-\lambda_i^-).
\]
\end{example}

\begin{lemma}
The space $R^{r-2}(K)$ takes the forms
\[
R^{r-2}(K)=\textup{span}\set{P_{k,r-2}^{(2,2,2)}(\lambda_1,\lambda_2,\lambda_3)}{0\le k\le r-2}.
\]
\end{lemma}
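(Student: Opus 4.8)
I would mirror the proof just given for $Q_i^{r-2}(K)$, with the weight $b_Kb_i$ replaced by $b_K^2$. The starting observation is that $b_K^2=\lam_1^2\lam_2^2\lam_3^2$ coincides with the weight $\lam_1^\alpha\lam_2^\beta\lam_3^\gamma$ for the parameter triple $(\alpha,\beta,\gamma)=(2,2,2)$, so by~\eqref{eq:orthJacobi2} the family $\{P_{k,n}^{(2,2,2)}(\lam_1,\lam_2,\lam_3)\mid 0\le k\le n\le r-2\}$ is an orthogonal basis of $\mb{P}_{r-2}(K)$ for the bilinear form $(f,g)\mapsto\negint_K b_K^2 fg\dx$.

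First I would write a generic $v\in R^{r-2}(K)\subset\mb{P}_{r-2}(K)$ in this basis, $v=\sum_{0\le k\le n\le r-2}a_{kn}P_{k,n}^{(2,2,2)}(\lam_1,\lam_2,\lam_3)$, and insert it into the defining orthogonality in~\eqref{eq:bubble2}. Testing against $q=P_{j,m}^{(2,2,2)}(\lam_1,\lam_2,\lam_3)$ for each admissible pair $0\le j\le m\le r-3$ and invoking~\eqref{eq:orthJacobi2}, the sum collapses to a single term proportional to $a_{jm}\,h_{j,m}^{(2,2,2)}$, forcing $a_{jm}=0$ for all $0\le j\le m\le r-3$; hence the only coefficients that may survive are those with $n=r-2$, which gives the inclusion $R^{r-2}(K)\subset\textup{span}\{P_{k,r-2}^{(2,2,2)}(\lam_1,\lam_2,\lam_3)\mid 0\le k\le r-2\}$. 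For the reverse inclusion, each $P_{k,r-2}^{(2,2,2)}$ is orthogonal, with respect to the weight $b_K^2$, to every $P_{j,m}^{(2,2,2)}$ with $m\le r-3$ (the Kronecker factor $\delta_{mn}$ in~\eqref{eq:orthJacobi2} kills the pairing whenever $m\ne r-2$), and these span $\mb{P}_{r-3}(K)$, so $\negint_K b_K^2 P_{k,r-2}^{(2,2,2)}q\dx=0$ for all $q\in\mb{P}_{r-3}(K)$, i.e. $P_{k,r-2}^{(2,2,2)}\in R^{r-2}(K)$.

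I expect no genuine obstacle: the proof is bookkeeping once the weight is identified. The one point requiring care is matching $b_K^2$ to the parameter triple $(2,2,2)$ and checking that the ordering of the barycentric variables in~\eqref{eq:Jacobi2} is consistent with the way $P_{k,r-2}^{(2,2,2)}(\lam_1,\lam_2,\lam_3)$ is written in the statement; since $b_K^2$ is symmetric in $\lam_1,\lam_2,\lam_3$ this ordering is in fact immaterial here, in contrast to the $Q_i^{r-2}(K)$ case where the weight $b_Kb_i=\lam_i(\lam_i^+)^2(\lam_i^-)^2$ is symmetric only in $\lam_i^+,\lam_i^-$. As a consistency check, the claimed span has dimension $r-1=\dim\mb{P}_{r-2}(K)-\dim\mb{P}_{r-3}(K)$, exactly the number of linear constraints imposed in~\eqref{eq:bubble2}, as it must be.
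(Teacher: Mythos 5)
Your argument is the same as the paper's: expand $v$ in the $b_K^2$-orthogonal Jacobi basis $\{P_{k,n}^{(2,2,2)}\}$, test the constraint in~\eqref{eq:bubble2} against each $P_{j,m}^{(2,2,2)}$ with $m\le r-3$, and use~\eqref{eq:orthJacobi2} to conclude $a_{jm}=0$. The paper stops after establishing the forward inclusion, whereas you also spell out the reverse inclusion and a dimension count, but this is just making explicit what the paper leaves implicit.
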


\begin{proof}
For any $v\in R^{r-2}(K)\subset\mathbb{P}_{r-2}(K)$, we expand $v$ into
\[
v=\sum_{0\le k\le n\le r-2}a_{kn}P_{k,n}^{(2,2,2)}(\lambda_1,\lambda_2,\lambda_3)
\]
with $a_{kn}$ to be determined later on. Using the above representation, we may write the constraint in the definition of the space~\eqref{eq:bubble2} as: For all $q\in\mathbb{P}_{r-3}(K)$,
\[
\sum_{0\le k\le n\le r-2}a_{kn}\negint_Kb_K^2P_{k,n}^{(2,2,2)}(\lambda_1,\lambda_2,\lambda_3)q\mathrm{d}x=0.
\]
We substitute $q=P_{j,m}^{(2,2,2)}(\lambda_1,\lambda_2,\lambda_3)$ for $0\le j\le m\le r-3$ into the above equation. Using the orthogonal relation~\eqref{eq:orthJacobi2}, we may obtain $a_{jm}=0$ for $0\le j\le m\le r-3$.
This completes the proof.
\end{proof}

Motivated by the above lemma, we change the definition of DoFs for $b_K^2R^{r-2}(K)$ from $\int_Kvq\dx$ for any $q\in\mb{P}_{r-2}(K)\setminus\mb{P}_{r-3}(K)$ to
\[
\negint_KvP_{k,r-2}^{(2,2,2)}(\lambda_1,\lambda_2,\lambda_3)\dx, \quad k=0,\cdots, r-2.
\]

\begin{lemma}
The shape functions for the bubble space $b_K^2R^{r-2}(K)$ associated with the above definition of DoFs are
\[
a_{k,r-2}b_K^2P_{k,r-2}^{(2,2,2)}(\lambda_1,\lambda_2,\lambda_3)
\]
with
\[
a_{k,r-2}=\dfrac{(2r+4)(2k+5)(r+k+4)(r+k+5)(k+3)(k+4)}{2(r-k)(r-1-k)(k+1)(k+2)}\qquad k=0,\cdots,r-2.
\]
\end{lemma}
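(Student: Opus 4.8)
The plan is to mirror the strategy already used for the bubble block $b_K\sum_{i=1}^3 b_iQ_i^{r-2}(K)$, now with the weight $b_K^2$ in place of $b_Kb_i$. By the preceding lemma, $R^{r-2}(K)=\textup{span}\set{P_{k,r-2}^{(2,2,2)}(\lambda_1,\lambda_2,\lambda_3)}{0\le k\le r-2}$, so $b_K^2R^{r-2}(K)$ is spanned by the $r-1$ functions $b_K^2P_{k,r-2}^{(2,2,2)}(\lambda_1,\lambda_2,\lambda_3)$, which is exactly the number $\textup{dim}\,\mb{P}_{r-2}(K)-\textup{dim}\,\mb{P}_{r-3}(K)$ of modified degrees of freedom. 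Since $\set{P_{k,r-2}^{(2,2,2)}}{0\le k\le r-2}$ together with $\mb{P}_{r-3}(K)$ span $\mb{P}_{r-2}(K)$, replacing the moments $\int_Kvq\dx$ against $\mb{P}_{r-2}(K)\setminus\mb{P}_{r-3}(K)$ by the averaged moments $\negint_KvP_{k,r-2}^{(2,2,2)}(\lambda_1,\lambda_2,\lambda_3)\dx$ is admissible and keeps the triple unisolvent, so it suffices to exhibit the dual basis explicitly.

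First I would test the modified degree of freedom $v\mapsto\negint_KvP_{j,r-2}^{(2,2,2)}(\lambda_1,\lambda_2,\lambda_3)\dx$ on the candidate $b_K^2P_{k,r-2}^{(2,2,2)}(\lambda_1,\lambda_2,\lambda_3)$. Because $b_K=\lambda_1\lambda_2\lambda_3$ gives $b_K^2=\lambda_1^2\lambda_2^2\lambda_3^2$, the integrand is precisely the weighted product appearing in the orthogonality relation~\eqref{eq:orthJacobi2} with $\alpha=\beta=\gamma=2$ and $n=m=r-2$, hence the value equals $2h_{k,r-2}^{(2,2,2)}\delta_{jk}$. Thus the Gram matrix of $\set{b_K^2P_{k,r-2}^{(2,2,2)}}{0\le k\le r-2}$ against the modified DoFs is diagonal with entries $2h_{k,r-2}^{(2,2,2)}$, the shape function dual to the $k$-th DoF is $a_{k,r-2}b_K^2P_{k,r-2}^{(2,2,2)}(\lambda_1,\lambda_2,\lambda_3)$ with $a_{k,r-2}=1/\Lr{2h_{k,r-2}^{(2,2,2)}}$, and this diagonal structure is also what decouples the bubble block from the $\mb{P}_r(K)$ part.

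It remains to make the coefficient explicit, which is pure bookkeeping. I would substitute $\alpha=\beta=\gamma=2$ and $n=r-2$ into the closed form~\eqref{eq:squaremoment2}, reducing the prefactor $1/\bigl((2n+\alpha+\beta+\gamma+2)(2k+\beta+\gamma+1)\bigr)$ to $1/\bigl((2r+4)(2k+5)\bigr)$ and, via $\Gamma(x+1)=x\Gamma(x)$, the ratio of Gamma functions to
\[
\frac{\Gamma(r-k+1)\Gamma(r+k+4)\Gamma(k+3)\Gamma(k+3)}{\Gamma(r-k-1)\Gamma(r+k+6)\Gamma(k+1)\Gamma(k+5)}
=\frac{(r-k)(r-k-1)(k+1)(k+2)}{(r+k+4)(r+k+5)(k+3)(k+4)},
\]
so that
\[
h_{k,r-2}^{(2,2,2)}=\frac{(r-k)(r-1-k)(k+1)(k+2)}{(2r+4)(2k+5)(r+k+4)(r+k+5)(k+3)(k+4)}.
\]
Taking the reciprocal of twice this quantity yields exactly the stated formula for $a_{k,r-2}$. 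I expect no conceptual obstacle; the only care needed is tracking the shifted Gamma arguments correctly and noting that for $0\le k\le r-2$ with $r\ge2$ none of the denominator factors (in particular $r-1-k\ge1$) vanish, so the dual basis is well defined.
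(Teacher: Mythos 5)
Your proof is correct and follows essentially the same route as the paper: both use the orthogonality relation~\eqref{eq:orthJacobi2} with $\alpha=\beta=\gamma=2$, $n=m=r-2$ to show the bubble-block moment matrix is diagonal with entries $2h_{k,r-2}^{(2,2,2)}$, identify $a_{k,r-2}=1/\Lr{2h_{k,r-2}^{(2,2,2)}}$, and then simplify the closed form~\eqref{eq:squaremoment2}; your Gamma-function bookkeeping matches the paper's stated value of $h_{k,r-2}^{(2,2,2)}$. The only slightly loose aside is the parenthetical attribution of the decoupling of the bubble block from $\mb{P}_r(K)$ to the diagonality of this Gram matrix; the decoupling actually comes from the fact that $b_K^2R^{r-2}(K)$ vanishes on all the remaining DoFs (vertex values, edge moments, edge normal moments, and interior moments against $\mb{P}_{r-3}$), a fact you do implicitly use, so this does not affect correctness.
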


\begin{proof}
For $j=1,\cdots,r-2$, we obtain
\[\negint_Kb_K^2P_{k,r-2}^{(2,2,2)}(\lambda_1,\lambda_2,\lambda_3)P_{j,r-2}^{(2,2,2)}(\lambda_1,\lambda_2,\lambda_3)\dx=2h_{k,r-2}^{(2,2,2)}\delta_{jk},
\]
which gives
\[
a_{k,r-2}=\dfrac1{2h_{k,r-2}^{(2,2,2)}}.
\]
Using~\eqref{eq:squaremoment2}, we obtain
\[
h_{k,r-2}^{(2,2,2)}=\dfrac{(r-k)(r-1-k)(k+1)(k+2)}{(2r+4)(2k+5)(r+k+4)(r+k+5)(k+3)(k+4)}.
\]
These give the simplified expression of $a_{k,r-2}$.
\end{proof}

According to the definition of bubble space, we may have
\begin{example}
The bubble space $b_K^2R^{r-2}(K)$ for the lowest-order case $r=2$ is span$\{b_K^2\}$.

The shape functions associated with $\negint_{K}v\dx$ is $2520b_K^2$.

The bubble space $b_K^2R^{r-2}(K)$ for the case $r=3$ is $b_K^2$span$\{2\lam_1-\lam_2-\lam_3,\lambda_2-\lambda_3\}$.

The shape functions associated with $3\,\negint_Kv(2\lam_1-\lam_2-\lam_3)\dx$ is $4200b_K^2(2\lam_1-\lam_2-\lam_3)$.

The shape functions associated with $3\,\negint_Kv(\lambda_2-\lambda_3)\dx$ is $12600b_K^2(\lambda_2-\lambda_3)$.
\end{example}
\begin{remark}
Based on the above results, we give the details for the element of the lowest-order, i.e., $r=2$, which have been used in the numerical examples. The local finite element space
\[
P_K=\mb{P}_2(K)+b_K\sum_{i=1}^3\text{span}\{b_i\}+\text{span}\{b_K^2\},
\]
and DoFs
\[
\Sigma_K=\set{v(a_i),\negint_{e_i}v\dsx,\negint_{e_i}\pa_{\boldsymbol n}v\dsx,\negint_Kv\dx}{i=1,2,3}.
\]
The shape functions associated with $\{v(a_i)\}_{i=1,2,3}$ are
\[
\phi_i=\lambda_i(3\lambda_i-2)+30b_K\Lr{2b_i+\sum_{j\neq i}\dfrac{\na\lam_i\cdot\na\lam_j}{\abs{\na\lam_j}^2}b_j(4\lam_j-1)+6b_K}.
\]
The shape functions associated with $\{\negint_{e_i}v\md\sigma(\boldsymbol x)\}_{i=1,2,3}$ are
\[
\varphi_i=6b_i+90b_K\Lr{b_i-\sum_{j\neq i}b_j-10b_K}.
\]
The shape functions associated with $\{\negint_{e_i}\partial_{\boldsymbol n}v\md\sigma(\boldsymbol x)\}_{i=1,2,3}$ are
\[
\psi_i=\dfrac{30}{\abs{\na\lam_i}}b_Kb_i(4\lam_i-1).
\]
The shape functions associated with $\negint_K v\dx$ is $\phi_0=2520b_K^2$.
\end{remark}

%
\section{The Mixed Finite Elements Approximation}
In this part, we construct stable finite element pairs to approximate~\eqref{eq:mix}. We ignore the influence of the curved boundary for error estimates for brevity. 
Define
\[
X_h{:}=\set{v\in H_0^1(\Omega)}{v|_K\in P_K\text{ for all }K\in\T_h,\quad \int_e\jump{\partial_{\boldsymbol n}v}q\dsx=0\text{ for all }e\in\mc{E}_h,q\in\mb{P}_{r-2}(e)},
\]
and $V_h{:}=[X_h]^2$. Let $P_h\subset P$ be the continuous Lagrangian finite element of order $r-1$. We shall prove a uniform discrete B-B inequality for the pair $(V_h,P_h)$. 

The following rescaled trace inequality will be used later on: There exists $C$ independent of $h_K$ such that
\begin{equation}\label{eq:trace}
\nm{v}{L^2(\pa K)}\le C\Lr{h_K^{-1/2}\nm{v}{L^2(K)}+\nm{v}{L^2(K)}^{1/2}\nm{\na\boldsymbol v}{L^2(K)}^{1/2}}.
\end{equation}
This inequality may be found in~\cite{BrennerScott:2008}.
\subsection{The mixed finite element approximation}
We define the mixed finite element approximation problem as follows. Find $\boldsymbol u_h\in V_h$ and $p_h\in P_h$ such that
\begin{equation}\label{eq:mixap}
\left\{\begin{aligned}
a_{\iota,h}(\boldsymbol u_h,\boldsymbol v)+b_{\iota,h}(\boldsymbol v,p_h)&=(\boldsymbol f,\boldsymbol v)\qquad&&\text{for all\quad}\boldsymbol v\in V_h,\\
b_{\iota,h}(\boldsymbol u_h,q)-\lam^{-1}c_\iota(p_h,q)&=0\qquad&&\text{for all\quad}q\in P_h,
\end{aligned}\right.
\end{equation}
where 
\[
\begin{aligned}
a_{\iota,h}(\boldsymbol v,\boldsymbol w){:}&=2\mu\lr{(\boldsymbol\eps(\boldsymbol v),\boldsymbol\eps(\boldsymbol w))+\iota^2(\na_h\boldsymbol\eps(\boldsymbol v),\na_h\boldsymbol\eps(\boldsymbol w))}\quad &&\text{for all\,}\boldsymbol v,\boldsymbol w\in V_h,\\
b_{\iota,h}(\boldsymbol v,q){:}&=(\divop\boldsymbol v, q)+\iota^2(\na_h\divop\boldsymbol v,\na q)\quad &&\text{for all\,}\boldsymbol v\in V_h, q\in P_h.
\end{aligned}
\]

Note that $V_h\not\subset V$, and this is a nonconforming method, we introduce the broken norm
\[
\nm{\nabla\boldsymbol v}{\iota,h}{:}=\nm{\na\boldsymbol v}{L^2}+\iota\nm{\na_h^2\boldsymbol v}{L^2}\qquad\text{for all}\quad \boldsymbol v\in V_h
\]
Due to the continuity of $\boldsymbol v$, $\nm{\na\boldsymbol v}{\iota,h}$ is a norm over $V_h$.%

The following broken Korn's inequality was proved in~\cite{LiMingWang:2021}*{Theorem 2}:
\[
\nm{\na_h\boldsymbol\eps(\boldsymbol v)}{L^2}\ge \Lr{1-1/\sqrt2}\nm{\na_h^2\boldsymbol v}{L^2},
\]
which together with the first Korn's inequality~\eqref{eq:1stkorn} gives 
\begin{equation}\label{eq:diskorn}
a_{\iota,h}(\boldsymbol v,\boldsymbol v)\ge\dfrac\mu2\nm{\na\boldsymbol v}{\iota,h}^2\qquad\text{for all }\boldsymbol v\in V_h.
\end{equation}%

It remains to prove the discrete B-B inequality for the pair $(V_h,P_h)$. To this end, we construct a Fortin operator that is uniformly stable in the weighted norm $\nm{\na\cdot}{\iota,h}$~\cite{Winther:2013}. The key is to construct different Fortin operators for $\iota/h$ in different regimes.

Firstly we define an interpolation operator $\varPi_h:V\to V_h$ by $\varPi_h|_K{:}=\varPi_K=[\pi_K]^2$, which satisfies
\begin{lemma}
For all $\boldsymbol v\in V$, there holds
\begin{equation}\label{eq:inter2}
b_{\iota,h}(\varPi_h\boldsymbol v,p)=b_\iota(\boldsymbol v,p)\qquad\text{for all\;}p\in P_h.
\end{equation}
\end{lemma}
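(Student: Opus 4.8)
The plan is to reduce the asserted identity to the defining equations~\eqref{eq:inter1} of $\pi_K$, tested against derivatives of $p|_K\in\mb{P}_{r-1}(K)$. First I would record two preliminary facts. Since $\boldsymbol v\in V=[H^2_0(\Om)]^2$ has $\divop\boldsymbol v\in H^1_0(\Om)$, the broken gradient coincides with the genuine one, so $b_{\iota,h}(\boldsymbol v,p)=b_\iota(\boldsymbol v,p)$; and $\varPi_h\boldsymbol v\in V_h\subset[H_0^1(\Om)]^2$ — continuity across interior edges holds because the trace of $P_K$ on an edge $e$ is $\mb{P}_r(e)$, which is fixed by the vertex values and the moments against $\mb{P}_{r-2}(e)$, all single valued; the homogeneous boundary condition and the constraint defining $X_h$ follow likewise from~\eqref{eq:inter1} and $\boldsymbol v\in[H^2_0(\Om)]^2$. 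Hence $\boldsymbol\phi{:}=\varPi_h\boldsymbol v-\boldsymbol v$ lies in $[H_0^1(\Om)]^2$, satisfies $\boldsymbol\phi|_K\in[H^2(K)]^2$, and vanishes at every vertex of $\T_h$ by~\eqref{eq:inter1}$_1$. Consequently $b_{\iota,h}(\varPi_h\boldsymbol v,p)-b_\iota(\boldsymbol v,p)=b_{\iota,h}(\boldsymbol\phi,p)$, and it remains to show
\[
\sum_{K\in\T_h}\Lr{(\divop\boldsymbol\phi,p)_K+\iota^2(\na\divop\boldsymbol\phi,\na p)_K}=0\qquad\text{for all }p\in P_h.
\]

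For the zeroth-order term I would integrate by parts globally: because $\boldsymbol\phi\in[H_0^1(\Om)]^2$ and $p\in H^1(\Om)$, $(\divop\boldsymbol\phi,p)=-(\boldsymbol\phi,\na p)=-\sum_K(\boldsymbol\phi,\na p)_K$, and each summand vanishes since $\na p|_K\in[\mb{P}_{r-2}(K)]^2$ while $\int_K\boldsymbol\phi\, q\dx=\boldsymbol 0$ for all $q\in\mb{P}_{r-2}(K)$ by~\eqref{eq:inter1}$_4$ applied componentwise.

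For the $\iota^2$-term I would work one element at a time, integrating by parts twice,
\[
(\na\divop\boldsymbol\phi,\na p)_K=-(\divop\boldsymbol\phi,\Del p)_K+\int_{\pa K}\divop\boldsymbol\phi\;\pa_{\boldsymbol n}p\dsx,\qquad
(\divop\boldsymbol\phi,\Del p)_K=-(\boldsymbol\phi,\na\Del p)_K+\int_{\pa K}(\boldsymbol\phi\cdot\boldsymbol n)\,\Del p\dsx.
\]
The volume term $(\boldsymbol\phi,\na\Del p)_K$ vanishes because $\na\Del p|_K\in[\mb{P}_{r-4}(K)]^2\subset[\mb{P}_{r-2}(K)]^2$ (it is simply $\boldsymbol 0$ for $r\le3$), again by~\eqref{eq:inter1}$_4$. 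On each edge $e\subset\pa K$ the unit vectors $\boldsymbol n,\boldsymbol t$ are constant, $\Del p|_e\in\mb{P}_{r-3}(e)\subset\mb{P}_{r-2}(e)$ and $\pa_{\boldsymbol n}p|_e\in\mb{P}_{r-2}(e)$; hence $\int_e(\boldsymbol\phi\cdot\boldsymbol n)\Del p\dsx=0$ by~\eqref{eq:inter1}$_2$. For the remaining boundary integral I would use the edge decomposition $\divop\boldsymbol\phi|_e=(\pa_{\boldsymbol n}\boldsymbol\phi)\cdot\boldsymbol n+\pa_{\boldsymbol t}(\boldsymbol\phi\cdot\boldsymbol t)$: the first piece is annihilated by~\eqref{eq:inter1}$_3$ since $\pa_{\boldsymbol n}p|_e\in\mb{P}_{r-2}(e)$, and for the second a one-dimensional integration by parts along $e$ gives
\[
\int_e\pa_{\boldsymbol t}(\boldsymbol\phi\cdot\boldsymbol t)\,\pa_{\boldsymbol n}p\dsx=\bigl[(\boldsymbol\phi\cdot\boldsymbol t)\,\pa_{\boldsymbol n}p\bigr]_{\pa e}-\int_e(\boldsymbol\phi\cdot\boldsymbol t)\,\pa_{\boldsymbol t}\pa_{\boldsymbol n}p\dsx,
\]
whose endpoint contributions vanish because $\boldsymbol\phi$ is zero at the vertices of $K$, and whose remaining integral vanishes because $\pa_{\boldsymbol t}\pa_{\boldsymbol n}p|_e\in\mb{P}_{r-3}(e)\subset\mb{P}_{r-2}(e)$, once more by~\eqref{eq:inter1}$_2$. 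Thus every elementwise bracket is zero, which proves the identity.

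The one step that needs care is the last boundary integral in the $\iota^2$-term: since $\varPi_h\boldsymbol v$ is only piecewise polynomial the broken gradient is unavoidable, which forces the edgewise integration by parts and the tangential--normal splitting of $\divop\boldsymbol\phi$, and there the vertex degrees of freedom in~\eqref{eq:inter1}$_1$ are precisely what is needed to discard the endpoint terms. The other steps are routine; I would also check the low-order cases $r=2,3$, where $\Del p$, $\na\Del p$, or $\pa_{\boldsymbol t}\pa_{\boldsymbol n}p$ collapse to constants or zero, but these cause no trouble.
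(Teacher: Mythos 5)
Your argument is correct and follows essentially the same route as the paper: elementwise integration by parts for the $\iota^2$-term, the normal–tangential splitting of $\divop$ on each edge, a second tangential integration by parts whose endpoint contributions are killed by the vertex DoFs~\eqref{eq:inter1}$_1$, and the remaining edge and interior moments annihilated by~\eqref{eq:inter1}$_2$, \eqref{eq:inter1}$_3$, \eqref{eq:inter1}$_4$. The only difference is that you spell out explicitly (i) why $\varPi_h\boldsymbol v$ lands in $V_h$ (single-valued edge traces) and (ii) why the vertex endpoint terms from the edgewise integration by parts vanish — details the paper treats as implicit — so your write-up is, if anything, a touch more complete.
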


\begin{proof}
Using the fact that $\varPi_h\boldsymbol v\in V_h\subset [H_0^1(\Omega)]^2$, an integration by parts gives
\begin{equation}\label{eq:dividen1}
\int_{\Omega}\divop(\boldsymbol v-\varPi_h\boldsymbol v)p\dx
=-\sum_{K\in\mathcal{T}_h}\int_K(\boldsymbol v-\varPi_K\boldsymbol v)\cdot\nabla p\dx=0,
\end{equation}
where we have used the identity~\eqref{eq:inter1}$_4$ in the last step.

Next,  integration by parts yields
\[
\int_{\Omega}\nabla\divop(\boldsymbol v-\varPi_h\boldsymbol v)\cdot\nabla p\dx
=\sum_{K\in\mathcal{T}_h}\int_{\partial K}\divop(\boldsymbol v-\varPi_K\boldsymbol v)\pa_{\boldsymbol n} p\dsx-\sum_{K\in\mathcal{T}_h}\int_K\divop(\boldsymbol v-\varPi_K\boldsymbol v)\Delta p\dx=0.
\]
The first term vanishes because $\partial_j=t_j\partial_{\boldsymbol t}+n_j\partial_{\boldsymbol n}$ for components $j=1,2$, and using~\eqref{eq:inter1}$_2$, we obtain that for each edge $e\in\pa K$, 
\[
\int_et_j\diff{}{\boldsymbol t}(v_j-\pi_Kv_j)\pa_{\boldsymbol n} p\dsx=-\int_et_j(v_j-\pi_Kv_j)\diff{^2p}{t\partial n}\dsx=0.
\]
Using~\eqref{eq:inter1}$_3$, we obtain
\[
\int_en_j\diff{}{\boldsymbol n}(v_j-\pi_Kv_j)\pa_{\boldsymbol n} p\dsx=0.
\]

While the second term vanishes because
\[
\int_K\divop(\boldsymbol v-\varPi_K\boldsymbol v)\Delta p\dx=\int_{\partial K}(\boldsymbol v-\varPi_K\boldsymbol v)\cdot\boldsymbol n\Delta p\dsx-\int_K(\boldsymbol v-\varPi_K\boldsymbol v)\cdot\nabla\Delta p\dx=0,
\]
where we have used~\eqref{eq:inter1}$_2$ and~\eqref{eq:inter1}$_4$.
\end{proof}

The operator $\varPi_h$ is not H$^1-$bounded by~\eqref{eq:localerr}, and we construct an H$^1-$bounded regularized interpolation operator as follows. 
\begin{lemma}
There exists an operator $I_h:\,V\mapsto V_h$ satisfying
\begin{equation}\label{eq:dividen2}
\int_{\Omega}\divop(\boldsymbol v-I_h\boldsymbol v)p\dx=0\qquad\text{for all\quad}p\in P_h,
\end{equation}
and if $\boldsymbol v\in V\cap [H^s(\Om)]^2$ with $1\le s\le r+1$, then
\begin{equation}\label{eq:globalerr}
\nm{\na^j_h(\boldsymbol v-I_h\boldsymbol v)}{L^2}\le Ch^{s-j}\nm{\na^s\boldsymbol v}{L^2}\qquad 0\le j\le s.
\end{equation}
\end{lemma}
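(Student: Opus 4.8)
The plan is to build $I_h$ as a Scott--Zhang type quasi-interpolant followed by a local interior-bubble correction that restores the element moments, so that the divergence identity \eqref{eq:dividen2} holds by construction while the approximation bound \eqref{eq:globalerr} is inherited from Scott--Zhang. First I would take a quasi-interpolation operator $S_h\colon V\to V_h$ of Scott--Zhang type, whose construction for the present degrees of freedom follows the standard pattern~\cite{BrennerScott:2008}: it is built from the global degrees of freedom of $V_h$ (vertex values, edge moments of $v$ against $\mb P_{r-2}(e)$, edge moments of $\pa_{\boldsymbol n}v$ against $\mb P_{r-2}(e)$, and element moments against $\mb P_{r-2}(K)$), with point evaluations replaced by suitable local edge averages and the averaging domains chosen near $\pa\Om$ so that the homogeneous data defining $V$ are preserved. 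Then $S_h$ reproduces $\mb P_r$ elementwise and satisfies, for $0\le j\le s\le r+1$,
\[
\nm{\na^j(\boldsymbol v-S_h\boldsymbol v)}{L^2(K)}\le Ch_K^{s-j}\nm{\na^s\boldsymbol v}{L^2(\om_K)},
\]
where $\om_K$ is the union of the elements meeting $K$.

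Next I would introduce the correction. Let $B_K\subset P_K$ be the subspace of those $w$ vanishing at all degrees of freedom in \eqref{eq:dom}$_{1,2,3}$. Every $w\in B_K$ vanishes on $\pa K$, since the $\mb P_r$-component of $w$ has zero edge traces (a degree-$r$ polynomial vanishing at the two endpoints of an edge and with zero $\mb P_{r-2}(e)$-moments is identically zero) while the remaining part of $w$ carries the factor $b_K$; moreover $\int_e\pa_{\boldsymbol n}w\,q\dsx=0$ for every edge $e$ and $q\in\mb P_{r-2}(e)$. By Lemma~\ref{lema:unisolvent} together with the dimension count $\dim B_K=\tfrac12r(r-1)=\dim\mb P_{r-2}(K)$, a function $w\in B_K$ is uniquely determined by the element moments $\bigl(\int_K w\,q\dx\bigr)_{q\in\mb P_{r-2}(K)}$ and every prescribed family of such moments is attained. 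Define $E_h\colon V\to V_h$ by letting $(E_h\boldsymbol v)|_K\in[B_K]^2$ be the unique element with $\int_K E_h\boldsymbol v\cdot\boldsymbol q\dx=\int_K(\boldsymbol v-S_h\boldsymbol v)\cdot\boldsymbol q\dx$ for all $\boldsymbol q\in[\mb P_{r-2}(K)]^2$, and set $I_h\boldsymbol v:=S_h\boldsymbol v+E_h\boldsymbol v$. Since each $(E_h\boldsymbol v)|_K$ vanishes on $\pa K$ and has vanishing normal-derivative edge moments, the pieces glue to an $[H_0^1(\Om)]^2$ field with $\int_e\jump{\pa_{\boldsymbol n}(E_h\boldsymbol v)}q\dsx=0$; hence $E_h\boldsymbol v\in V_h$ and $I_h\boldsymbol v\in V_h$.

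Property \eqref{eq:dividen2} then follows at once: by construction $\int_K I_h\boldsymbol v\cdot\boldsymbol q\dx=\int_K\boldsymbol v\cdot\boldsymbol q\dx$ for all $\boldsymbol q\in[\mb P_{r-2}(K)]^2$, so, $\boldsymbol v-I_h\boldsymbol v$ being in $[H_0^1(\Om)]^2$, an integration by parts gives $\int_\Om\divop(\boldsymbol v-I_h\boldsymbol v)p\dx=-\sum_{K\in\T_h}\int_K(\boldsymbol v-I_h\boldsymbol v)\cdot\na p\dx=0$ for all $p\in P_h$, because $\na p|_K\in[\mb P_{r-2}(K)]^2$. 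For \eqref{eq:globalerr}, I would write $\boldsymbol v-I_h\boldsymbol v=(\boldsymbol v-S_h\boldsymbol v)-E_h\boldsymbol v$ and bound the first term by the Scott--Zhang estimate above. For the second term, affine scaling to the reference triangle and finite-dimensionality of $B_K$ yield $\nm{E_h\boldsymbol v}{L^2(K)}\le C\nm{\boldsymbol v-S_h\boldsymbol v}{L^2(K)}$, and then the inverse inequality gives $\nm{\na^j(E_h\boldsymbol v)}{L^2(K)}\le Ch_K^{-j}\nm{E_h\boldsymbol v}{L^2(K)}\le Ch_K^{s-j}\nm{\na^s\boldsymbol v}{L^2(\om_K)}$; squaring, summing over $K\in\T_h$, and using $h_K\le h$ with $s\ge j$ together with the bounded overlap of the patches $\om_K$ gives \eqref{eq:globalerr}.

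The main obstacle is the first step: arranging the Scott--Zhang operator so that its range lies exactly in $V_h$, simultaneously respecting $H_0^1$-conformity, the nonstandard normal-derivative jump constraint in the definition of $X_h$, and both homogeneous boundary conditions encoded in $V$; and then checking that the interior-bubble corrector leaves all of this intact, which rests entirely on the structural description of $B_K$, hence on the unisolvence in Lemma~\ref{lema:unisolvent}. Once these are in place, the remaining steps are routine scaling and inverse-inequality arguments. This ``stable interpolant plus local correction'' construction is in the spirit of the Fortin operators in~\cite{Winther:2013}.
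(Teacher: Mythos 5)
Your construction is essentially the paper's: the paper defines $I_h=[\varPi_1]^2$ with $\varPi_1=\varPi_2+\varPi_0(I-\varPi_2)$, where $\varPi_2$ is the regularized quasi-interpolant of Guzm\'an--Leykekhman--Neilan (their Lemma~2) mapping $H^2_0(\Om)$ into $X_h$ with the same error bound you assume for $S_h$, and $\varPi_0$ is the interior-bubble corrector that zeroes all vertex and edge DoFs while matching the $\mb{P}_{r-2}(K)$ element moments — exactly your $E_h$ applied to the residual $v-\varPi_2 v$. The only difference is that you propose building the stable quasi-interpolant as a Scott--Zhang type operator from scratch (and supply more detail on the dimension count for $B_K$), whereas the paper simply cites the known construction for this specific element, which avoids the obstacle you flag about arranging Scott--Zhang to land in $V_h$ while respecting the normal-derivative jump condition.
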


The construction of $I_h$ is based on a regularized interpolation operator in~\cite{GuzmanLeykekhmanNeilan:2012} and the standard construction of the Fortin operator~\cite{Fortin:1977}. The operator $I_h$ is also well-defined for functions in $[H^2(\Om)\cap H_0^1(\Om)]^2$.
\begin{proof}
Define $I_h\,:V\mapsto V_h$ with $I_h{:}=[\varPi_1]^2$ and
\[
\varPi_1{:}=\varPi_0(I-\varPi_2)+\varPi_2,
\]
where the regularized interpolation operator $\varPi_2:H_0^2(\Omega)\mapsto X_h$ was constructed in~\cite{GuzmanLeykekhmanNeilan:2012}*{Lemma 2}, which satisfies
\begin{equation}\label{eq:guzmanerr}
\nm{\na^j_h(v-\varPi_2v)}{L^2}\le Ch^{s-j}\nm{\na^sv}{L^2},\qquad 1\le s\le r+1, 0\le j\le s.
\end{equation}
The operator $\varPi_0\,:H_0^1(\Om)\mapsto X_h$ is defined for any element $K\in\mc{T}_h$ as
\[
\left\{\begin{aligned}
\varPi_0 v(a)&=0\quad\text{for all vertices}\,a,\\
\int_e\varPi_0vq\dsx&=0\quad\forall q\in\mb{P}_{r-2}(e)\quad\text{for all edges}\,e, \\
\int_e\pa_{\boldsymbol n}\varPi_0vq\dsx&=0\quad\forall q\in\mb{P}_{r-2}(e)\quad\text{for all edges}\,e,\\
\int_K\varPi_0vq\dx&=\int_Kvq\dx\quad\text{for all\quad} q\in\mb{P}_{r-2}(K).
\end{aligned}\right.
\]

On each element $K$, we have $(\varPi_0v)|_K \in H_0^1(K)$ for any $v\in H_0^1(\Om)$, and hence $\varPi_0 v\in X_h$. A standard scaling argument gives
\[
\nm{\varPi_0v}{L^2}\le C\nm{v}{L^2}\qquad\text{for all }v\in H_0^1(\Omega).
\]

For any $\boldsymbol v\in V$, using the fact that $I_h\boldsymbol v\in V_h$ and $p\in P_h$, an integration by parts gives
\begin{align*}
\int_{\Omega}\divop(\boldsymbol v-I_h\boldsymbol v)p\dx&=-\sum_{K\in\mathcal{T}_h}\int_K(\boldsymbol v-I_h\boldsymbol v)\cdot\nabla p\dx+\sum_{K\in\mathcal{T}_h}
\int_{\pa K}(\boldsymbol v-I_h\boldsymbol v)\cdot\boldsymbol n p\dsx\\
&=-\sum_{K\in\mathcal{T}_h}\int_K(\boldsymbol v-I_h\boldsymbol v)\cdot\nabla p\dx=-\sum_{K\in\mathcal{T}_h}\int_K(I-\varPi_0)(I-\varPi_2)v\cdot\nabla p\dx\\
&=0,
\end{align*}
where we have used the last property of $\varPi_0$. This gives~\eqref{eq:dividen2}.

Using~\eqref{eq:guzmanerr}, the L$^2$-stability of $\varPi_0$ and the inverse inequality, we obtain
\begin{align*}
\nm{\na^j_h(\boldsymbol v-\varPi_1\boldsymbol v)}{L^2}\le &\nm{\na^j_h(\boldsymbol v-\varPi_2\boldsymbol v)}{L^2}+\nm{\na^j_h\varPi_0(\boldsymbol v-\varPi_2\boldsymbol v)}{L^2}\\
\le&Ch^{s-j}\nm{\na^s\boldsymbol v}{L^2}+Ch^{-j}\nm{\boldsymbol v-\varPi_2\boldsymbol v}{L^2}\\
\le&Ch^{s-j}\nm{\na^s\boldsymbol v}{L^2}.
\end{align*}
This implies~\eqref{eq:globalerr} and completes the proof.
\end{proof}

We are ready to prove the following discrete B-B inequality.
\begin{theorem}
There exists $\beta$ independent of $\iota$ and $h$, such that 
\begin{equation}\label{eq:disbb}
\sup_{\boldsymbol v\in V_h}\dfrac{b_{\iota,h}(\boldsymbol v,p)}{\nm{\na\boldsymbol v}{\iota,h}}\ge\beta\inm{p}\qquad\text{for all\quad}p\in P_h.
\end{equation}
\end{theorem}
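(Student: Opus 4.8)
The plan is to construct a Fortin operator $\varPi_F\colon V\to V_h$ — one with $b_{\iota,h}(\varPi_F\boldsymbol v,q)=b_\iota(\boldsymbol v,q)$ for all $q\in P_h$ together with the uniform bound $\nm{\na\varPi_F\boldsymbol v}{\iota,h}\le C\inm{\na\boldsymbol v}$ — so that~\eqref{eq:disbb} reduces at once to the continuous B-B inequality of Lemma~\ref{lema:div}. Given $p\in P_h$, Lemma~\ref{lema:div} provides $\boldsymbol v\in V$ with $\divop\boldsymbol v=p$, and by~\eqref{eq:estdiv} this $\boldsymbol v$ also satisfies $\nm{\boldsymbol v}{H^1}\le C\nm{p}{L^2}$ and $\nm{\boldsymbol v}{H^2}\le C\nm{p}{H^1}$, hence $\inm{\na\boldsymbol v}\le C\inm{p}$. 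I would take $\varPi_F=\varPi_h$. Then~\eqref{eq:inter2} gives
\[
b_{\iota,h}(\varPi_h\boldsymbol v,p)=b_\iota(\boldsymbol v,p)=\nm{p}{L^2}^2+\iota^2\nm{\na p}{L^2}^2\ \ge\ \tfrac12\inm{p}^2,
\]
so the numerator in~\eqref{eq:disbb} is already controlled, and everything comes down to the stability bound $\nm{\na\varPi_h\boldsymbol v}{\iota,h}\le C\inm{p}$.

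This stability estimate is the heart of the matter. Because $\varPi_h$ is only $\mb{P}_r$-invariant with the $H^2$-type interpolation bound~\eqref{eq:localerr}, taking $k=2$ there yields merely
\[
\nm{\na\varPi_h\boldsymbol v}{L^2}\le\nm{\na\boldsymbol v}{L^2}+Ch\nm{\na_h^2\boldsymbol v}{L^2},\qquad \iota\nm{\na_h^2\varPi_h\boldsymbol v}{L^2}\le C\iota\nm{\na_h^2\boldsymbol v}{L^2},
\]
so $\varPi_h$ is \emph{not} $H^1$-stable on a generic $H^1$ field, and the surplus $h\nm{\na_h^2\boldsymbol v}{L^2}$ is not obviously dominated by $\inm{\na\boldsymbol v}$. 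The way around this is to use that $\boldsymbol v$ came from the divergence equation: $\nm{\na_h^2\boldsymbol v}{L^2}\le\nm{\boldsymbol v}{H^2}\le C\nm{p}{H^1}\le C\nm{\na p}{L^2}$, so on one hand $\iota\nm{\na_h^2\boldsymbol v}{L^2}\le C\iota\nm{\na p}{L^2}\le C\inm{p}$, and on the other hand the inverse estimate on the finite element space $P_h$ gives $\nm{\na p}{L^2}\le Ch^{-1}\nm{p}{L^2}$, whence $h\nm{\na_h^2\boldsymbol v}{L^2}\le Ch\nm{\na p}{L^2}\le C\nm{p}{L^2}\le C\inm{p}$. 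Collecting these, $\nm{\na\varPi_h\boldsymbol v}{\iota,h}\le C\inm{p}$, and therefore
\[
\sup_{\boldsymbol w\in V_h}\frac{b_{\iota,h}(\boldsymbol w,p)}{\nm{\na\boldsymbol w}{\iota,h}}\ \ge\ \frac{b_{\iota,h}(\varPi_h\boldsymbol v,p)}{\nm{\na\varPi_h\boldsymbol v}{\iota,h}}\ \ge\ \frac{\tfrac12\inm{p}^2}{C\inm{p}}\ =\ \beta\inm{p},
\]
with $\beta$ independent of $\iota$ and $h$, which is~\eqref{eq:disbb}.

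The single genuinely delicate point, as just seen, is the uniform control of the unweighted seminorm $\nm{\na\varPi_h\boldsymbol v}{L^2}$, and the fix pairs the local $H^2$-interpolation bound with an inverse estimate on $P_h$. Should one wish to avoid the inverse estimate in the unweighted term when $\iota$ is large, the same conclusion follows from the two-regime construction announced in the text: retain $\varPi_h$ when $\iota$ is at least a fixed multiple of $h$ (so that $h\nm{\na_h^2\boldsymbol v}{L^2}\le C\iota\nm{\na_h^2\boldsymbol v}{L^2}\le C\inm{p}$ directly), and when $\iota$ is below that multiple of $h$ replace $\varPi_h$ by the $H^1$-stable interpolant $I_h$ of~\eqref{eq:globalerr}, which preserves the divergence only in the $L^2$ sense~\eqref{eq:dividen2}; then $b_{\iota,h}(I_h\boldsymbol v,p)=\nm{p}{L^2}^2+\iota^2(\na_h\divop I_h\boldsymbol v,\na p)$, the correction term being absorbed because two inverse estimates bound it by $C(\iota/h)^2\nm{p}{L^2}^2\le\tfrac12\nm{p}{L^2}^2$ once $\iota/h$ is small enough, on which range $\inm{p}$ is equivalent to $\nm{p}{L^2}$ and $\nm{\na I_h\boldsymbol v}{\iota,h}\le C\nm{p}{L^2}$. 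Either route produces $\beta$ independent of $\iota$ and $h$.
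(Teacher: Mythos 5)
Your main argument is correct and genuinely different from the paper's proof, which proceeds by the two-regime construction you sketch only in your closing paragraph. The paper bounds the unweighted part of $\nm{\na\varPi_h\boldsymbol v_0}{\iota,h}$ by $C(1+h/\iota)\inm{\na\boldsymbol v_0}$, which blows up as $\iota/h\to 0$; to handle that regime the paper switches to the $H^1$-stable $I_h$ and absorbs the non-preserved piece $\iota^2(\na_h\divop(\boldsymbol v_0-I_h\boldsymbol v_0),\na p)$ via the inverse estimate on $P_h$. Your observation is that the same inverse estimate can instead be applied directly to the term $h\nm{\na^2\boldsymbol v}{L^2}\le Ch\nm{\na p}{L^2}\le C\nm{p}{L^2}$, which neutralizes the bad factor $h/\iota$ at the source and makes the single operator $\varPi_h$ uniformly usable for all $\iota,h$. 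This is cleaner: one operator, no case split, and it exploits more directly that $p$ lives in the discrete space. Both routes rest on the same hypotheses (the mesh inverse assumption and the $H^1,H^2$ divergence-lifting bounds~\eqref{eq:estdiv}), so nothing is gained in generality, but your version is shorter and arguably more transparent. Two cosmetic points: the Fortin-operator framing in your opening paragraph overstates what you actually establish --- you do not prove $\nm{\na\varPi_h\boldsymbol v}{\iota,h}\le C\inm{\na\boldsymbol v}$ for all $\boldsymbol v\in V$ (indeed $\varPi_h$ is not $H^1$-stable), only the tailored bound $\nm{\na\varPi_h\boldsymbol v}{\iota,h}\le C\inm{p}$ for the specific lift of $p$, which is all the B-B argument needs; and the equality $b_\iota(\boldsymbol v,p)=\nm{p}{L^2}^2+\iota^2\nm{\na p}{L^2}^2$ paired with the lower bound $\ge\tfrac12\inm{p}^2$ is the careful version of what the paper writes loosely as $b_\iota(\boldsymbol v_0,p)=\inm{p}^2$.
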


\begin{proof}
Using~\eqref{eq:estdiv1}, for any $p\in P_h\subset P$, there exists $\boldsymbol v_0\in V$ such that
\[
b_\iota(\boldsymbol v_0,p)=\inm{p}^2\qquad\text{and\quad} \inm{\na\boldsymbol v_0}\le C\inm{p}.
\]

First, we consider the case $\iota/h\le \gamma$ with $\gamma$ to be determined later on. By~\eqref{eq:globalerr}, we obtain
\[
\nm{\na I_h\boldsymbol v_0}{\iota,h}\le\inm{\na\boldsymbol v_0}+\nm{\na(\boldsymbol v_0-I_h\boldsymbol v_0)}{\iota,h}\le C\inm{\na\boldsymbol v_0},
\]
and
\[
\nm{\na_h\divop(\boldsymbol v_0-I_h\boldsymbol v_0)}{L^2}\le C\nm{\na^2\boldsymbol v_0}{L^2}\le C\nm{\na p}{L^2}.
\]
Combining the above inequality and using the inverse inequality for any $p\in P_h$, we obtain
\[
\iota^2\abs{(\na_h\divop(\boldsymbol v_0-I_h\boldsymbol v_0),\na p)}\le C\iota^2\nm{\na p}{L^2}^2\le C_{\ast}(\iota/h)^2\nm{p}{L^2}^2\le \gamma^2C_{\ast}\inm{p}^2.
\]
Fix $\gamma$ such that $\gamma^2C_{\ast}<1$, we obtain
\[
b_{\iota,h}(I_h\boldsymbol v_0,p)=b_{\iota}(\boldsymbol v_0,p)-\iota^2(\na_h\divop(\boldsymbol v_0-I_h\boldsymbol v_0),\na p)\ge (1-\gamma^2C_{\ast})\inm{p}^2.
\]
This gives 
\begin{equation}\label{eq:disbb1}
\sup_{v\in V_h}\dfrac{b_{\iota,h}(\boldsymbol v,p)}{\nm{\na\boldsymbol v}{\iota,h}}\ge\dfrac{b_{\iota,h}(I_h\boldsymbol v_0,p)}{\nm{\na I_hv_0}{\iota,h}}\ge\dfrac{1-\gamma^2C_*}{C}\inm{p}.
\end{equation}

Next, if $\iota/h>\gamma$, then we use~\eqref{eq:localerr} and obtain
\[
\nm{\na(\boldsymbol v_0-\varPi_h\boldsymbol v_0)}{L^2}\le Ch\nm{\na^2\boldsymbol v_0}{L^2},\quad\text{and}\quad
\nm{\na^2_h(\boldsymbol v_0-\varPi_h\boldsymbol v_0)}{L^2}\le C\nm{\na^2\boldsymbol v_0}{L^2}.
\]
Therefore,
\[
\nm{\na(\boldsymbol v_0-\varPi_h\boldsymbol v_0)}{\iota,h}\le C(h+\iota)\nm{\na^2\boldsymbol v_0}{L^2}\le C(1+h/\iota)\inm{\na\boldsymbol v_0}\le C (1+1/\gamma)\inm{\na\boldsymbol v_0}.
\]
Hence,
\[
\nm{\na\varPi_h\boldsymbol v_0}{\iota,h}\le\inm{\na\boldsymbol v_0}+\nm{\na(\boldsymbol v_0-\varPi_h\boldsymbol v_0)}{\iota,h}\le C (2+1/\gamma)\inm{\na\boldsymbol v_0},
\]
which together with~\eqref{eq:inter2} gives
\begin{equation}\label{eq:disbb2}
\sup_{v\in V_h}\dfrac{b_{\iota,h}(\boldsymbol v,p)}{\nm{\na\boldsymbol v}{\iota,h}}\ge\dfrac{b_{\iota,h}(\varPi_h\boldsymbol v_0,p)}{\nm{\na\varPi_h\boldsymbol v_0}{\iota,h}}
=\dfrac{b_{\iota,h}(\boldsymbol v_0,p)}{\nm{\na\varPi_h\boldsymbol v_0}{\iota,h}}
=\dfrac{\inm{p}^2}{\nm{\na\varPi_h\boldsymbol v_0}{\iota,h}}\ge\dfrac{\gamma}{C(1+2\gamma)}\inm{p}.
\end{equation}

A combination of~\eqref{eq:disbb1} and~\eqref{eq:disbb2} shows that~\eqref{eq:disbb} holds true with $\beta$ independent of $\iota$ and $h$.
\end{proof}

The well-posedness of the mixed approximation problem~\eqref{eq:mixap} follows from the ellipticity of $a_{\iota,h}$ and the discrete B-B inequality of $b_{\iota,h}$. We are ready to derive the error estimate. 
\subsection{Error estimates}
To carry out the error estimate, we define the bilinear form $\mc{A}$ as
\[
\mc{A}(\boldsymbol v,q;\boldsymbol w,z){:}=a_{\iota,h}(\boldsymbol v,\boldsymbol w)+b_{\iota,h}(\boldsymbol w,q)+b_{\iota,h}(\boldsymbol v,z)-\lam^{-1}c_{\iota}(q,z)
\]
for all $\boldsymbol v,\boldsymbol w\in V_h$ and $q,z\in P_h$.

We prove the following inf-sup inequality for $\mc{A}$ with the aid of the discrete B-B inequality~\eqref{eq:disbb}.
\begin{lemma}\label{lema:infsup}
There exists $\al$ depending on $\mu$ and $\beta$ such that 
\begin{equation}\label{eq:infsup}
\inf_{(\boldsymbol v,q)\in V_h\x P_h}\sup_{(\boldsymbol w,z)\in V_h\x P_h}\dfrac{\mc{A}(\boldsymbol v,q;\boldsymbol w,z)}{\wnm{(\boldsymbol w,z)}\wnm{(\boldsymbol v,q)}}\ge\al,
\end{equation}
where $\wnm{(\boldsymbol w,z)}{:}=\nm{\na\boldsymbol w}{\iota,h}+\inm{z}+\lam^{-1/2}\inm{z}$ and $\beta$ has appeared in~\eqref{eq:disbb}.
\end{lemma}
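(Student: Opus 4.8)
The plan is to run the classical Brezzi-type argument for perturbed saddle-point problems: given an arbitrary $(\boldsymbol v,q)\in V_h\times P_h$ I will construct a test pair $(\boldsymbol w,z)\in V_h\times P_h$ with $\wnm{(\boldsymbol w,z)}\le C\wnm{(\boldsymbol v,q)}$ and $\mc{A}(\boldsymbol v,q;\boldsymbol w,z)\ge c\,\wnm{(\boldsymbol v,q)}^2$, which gives \eqref{eq:infsup} with $\al=c/C$. The three inputs are the coercivity \eqref{eq:diskorn} of $a_{\iota,h}$, its boundedness $\abs{a_{\iota,h}(\boldsymbol v,\boldsymbol w)}\le 2\mu\nm{\na\boldsymbol v}{\iota,h}\nm{\na\boldsymbol w}{\iota,h}$ (immediate from $\nm{\boldsymbol\eps(\cdot)}{L^2}\le\nm{\na\cdot}{L^2}$ and $\nm{\na_h\boldsymbol\eps(\cdot)}{L^2}\le\nm{\na_h^2\cdot}{L^2}$), and the uniform discrete B--B inequality \eqref{eq:disbb}; I also use the elementary equivalence $\tfrac12\inm{q}^2\le c_\iota(q,q)\le\inm{q}^2$, which follows directly from the definitions of $c_\iota$ and $\inm{\cdot}$.

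The first ingredient is the choice $(\boldsymbol w,z)=(\boldsymbol v,-q)$, which makes the two $b_{\iota,h}$ contributions cancel and turns the perturbation term positive:
\[
\mc{A}(\boldsymbol v,q;\boldsymbol v,-q)=a_{\iota,h}(\boldsymbol v,\boldsymbol v)+\lam^{-1}c_\iota(q,q)\ge\dfrac\mu2\nm{\na\boldsymbol v}{\iota,h}^2+\dfrac{\lam^{-1}}2\inm{q}^2 .
\]
This controls $\nm{\na\boldsymbol v}{\iota,h}$ and the $\lam^{-1/2}\inm{q}$ part of the target norm, but not $\inm{q}$ on its own. The second ingredient supplies the missing control: by \eqref{eq:disbb} (and since $P_h$ is finite-dimensional) there is $\boldsymbol v_q\in V_h$, normalised so that $\nm{\na\boldsymbol v_q}{\iota,h}=\inm{q}$, with $b_{\iota,h}(\boldsymbol v_q,q)\ge\beta\inm{q}^2$; combining the boundedness of $a_{\iota,h}$ with Young's inequality then yields
\[
\mc{A}(\boldsymbol v,q;\boldsymbol v_q,0)=a_{\iota,h}(\boldsymbol v,\boldsymbol v_q)+b_{\iota,h}(\boldsymbol v_q,q)\ge c_2\inm{q}^2-C_1\nm{\na\boldsymbol v}{\iota,h}^2 ,
\]
with $c_2,C_1>0$ depending only on $\mu$ and $\beta$.

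Finally I would combine the two choices by setting $(\boldsymbol w,z)=(\boldsymbol v+\theta\boldsymbol v_q,-q)$ with $\theta>0$ chosen small enough (depending only on $\mu,\beta$) that $\theta C_1\le\mu/4$; bilinearity of $\mc{A}$ in its second argument gives
\[
\mc{A}(\boldsymbol v,q;\boldsymbol v+\theta\boldsymbol v_q,-q)\ge\dfrac\mu4\nm{\na\boldsymbol v}{\iota,h}^2+\theta c_2\inm{q}^2+\dfrac{\lam^{-1}}2\inm{q}^2\ge c\Lr{\nm{\na\boldsymbol v}{\iota,h}^2+\inm{q}^2+\lam^{-1}\inm{q}^2}\ge c'\wnm{(\boldsymbol v,q)}^2 ,
\]
while $\wnm{(\boldsymbol v+\theta\boldsymbol v_q,-q)}\le(1+\theta)\wnm{(\boldsymbol v,q)}$ because $\nm{\na\boldsymbol v_q}{\iota,h}=\inm{q}$ and $\inm{-q}=\inm{q}$. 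Dividing through gives \eqref{eq:infsup}. The argument is essentially routine; the one point that needs care is to verify that all constants are independent of $\iota$ and $\lam$. This holds because $\beta$ and the coercivity/boundedness constants of $a_{\iota,h}$ are already $\iota$-uniform (established earlier), and the $\lam^{-1}c_\iota$ perturbation is exploited only through the $(\boldsymbol v,-q)$ test function, where it contributes the nonnegative term $\lam^{-1}c_\iota(q,q)$ --- exactly the quantity measured by the $\lam^{-1/2}\inm{z}$ summand of $\wnm{\cdot}$ --- so the analysis never distinguishes between small and large $\lam$.
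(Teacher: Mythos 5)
Your proof is correct and is exactly the standard argument that underlies the result the paper invokes; the paper's proof simply defers to \cite{Braess:1996}*{Theorem 2} (an abstract inf-sup result for penalized saddle-point problems) after noting that $a_{\iota,h}$ is uniformly coercive by~\eqref{eq:diskorn} and $b_{\iota,h}$ satisfies the uniform discrete B-B inequality~\eqref{eq:disbb}. Your test-function choice $(\boldsymbol w,z)=(\boldsymbol v+\theta\boldsymbol v_q,-q)$, the Young-inequality absorption, and the verification that the $\lam^{-1}c_\iota(q,q)$ term is the piece controlling the $\lam^{-1/2}\inm{q}$ summand of the weighted norm are precisely what Braess' theorem encapsulates, so the two routes coincide in substance.
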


\begin{proof}
Noting that $a_{\iota,h}$ is elliptic over $V_h$~\eqref{eq:diskorn} and the discrete B-B inequality for $b_{\iota,h}$ holds~\eqref{eq:disbb}, we obtain~\eqref{eq:infsup} by~\cite{Braess:1996}*{Theorem 2}.
\end{proof}

We are ready to prove error estimates. 
\begin{theorem}\label{thm:finalerr}
There exists $C$ independent of $\iota,\lambda$ and $h$ such that
\begin{equation}\label{eq:finalerr}
\wnm{(\boldsymbol u-\boldsymbol u_h,p-p_h)}\le C(h^r+\iota h^{r-1})(\nm{\boldsymbol u}{H^{r+1}}+\nm{p}{H^r}),
\end{equation}
and
\begin{equation}\label{eq:finalerr2}
  \wnm{(\boldsymbol u-\boldsymbol u_h,p-p_h)}\le Ch^{1/2}\nm{\boldsymbol f}{L^2}.
\end{equation}
\end{theorem}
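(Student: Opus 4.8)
The plan is to combine the inf--sup stability of $\mc A$ established in Lemma~\ref{lema:infsup} with a second Strang lemma: the error splits into a best--approximation part and the consistency error of the nonconforming bilinear form, and~\eqref{eq:finalerr}--\eqref{eq:finalerr2} will differ only in which regularity of $(\boldsymbol u,p)$ is inserted. \emph{Step 1 (abstract bound).} For any $(\boldsymbol v,q)\in V_h\x P_h$, inequality~\eqref{eq:infsup} furnishes $(\boldsymbol w,z)$ with $\al\wnm{(\boldsymbol u_h-\boldsymbol v,p_h-q)}\le\mc A(\boldsymbol u_h-\boldsymbol v,p_h-q;\boldsymbol w,z)/\wnm{(\boldsymbol w,z)}$. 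Since $\boldsymbol u\in[H^4(\Om)]^2$, $\mc A(\boldsymbol u,p;\cdot,\cdot)$ is defined on $V_h\x P_h$, and since $q\in P_h\subset P$ the second equation of~\eqref{eq:mix} gives $b_{\iota,h}(\boldsymbol u,z)-\lam^{-1}c_\iota(p,z)=0$; hence with $E_h(\boldsymbol w){:}=a_{\iota,h}(\boldsymbol u,\boldsymbol w)+b_{\iota,h}(\boldsymbol w,p)-(\boldsymbol f,\boldsymbol w)$ one has $\mc A(\boldsymbol u,p;\boldsymbol w,z)=(\boldsymbol f,\boldsymbol w)+E_h(\boldsymbol w)=\mc A(\boldsymbol u_h,p_h;\boldsymbol w,z)+E_h(\boldsymbol w)$, so by boundedness of $\mc A$ and the triangle inequality
\[
\wnm{(\boldsymbol u-\boldsymbol u_h,p-p_h)}\le C\inf_{(\boldsymbol v,q)\in V_h\x P_h}\wnm{(\boldsymbol u-\boldsymbol v,p-q)}+C\sup_{\boldsymbol 0\ne\boldsymbol w\in V_h}\frac{\abs{E_h(\boldsymbol w)}}{\nm{\na\boldsymbol w}{\iota,h}}.
\]

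\emph{Step 2 (consistency term).} Integrating by parts on each $K\in\T_h$ as in the derivation of~\eqref{eq:ceq}--\eqref{eq:aeq}, using the strong equation $(\iota^2\Del-\boldsymbol I)\mc L\boldsymbol u=\boldsymbol f$ and the continuity of $\boldsymbol w$ and of $\boldsymbol\sigma=2\mu\boldsymbol\eps(\boldsymbol u)+\lam\divop\boldsymbol u\,\boldsymbol I$ (so the zeroth--order and tangential interior--edge terms cancel and their $\pa\Om$ counterparts vanish), one obtains $E_h(\boldsymbol w)=\iota^2\sum_{e\in\mc E_h}\int_e(\pa_{\boldsymbol n}\boldsymbol\sigma\boldsymbol n)\cdot\jump{\pa_{\boldsymbol n}\boldsymbol w}\dsx$. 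Because $\int_e\jump{\pa_{\boldsymbol n}\boldsymbol w}\boldsymbol q\dsx=0$ for $\boldsymbol q\in[\mb P_{r-2}(e)]^2$ (definition of $V_h$; on $\pa\Om$ one also uses $\pa_{\boldsymbol n}\boldsymbol u=\boldsymbol 0$), we may replace $\pa_{\boldsymbol n}\boldsymbol\sigma\boldsymbol n$ by $(I-Q_e)(\pa_{\boldsymbol n}\boldsymbol\sigma\boldsymbol n)$, $Q_e$ the $L^2(e)$--projection onto $[\mb P_{r-2}(e)]^2$; and since $\mb P_{r-2}(e)\supset\mb P_0(e)$, $\jump{\pa_{\boldsymbol n}\boldsymbol w}$ has zero mean on $e$, so the edge Poincar\'e inequality and an inverse trace inequality give $\nm{\jump{\pa_{\boldsymbol n}\boldsymbol w}}{L^2(e)}\le Ch_e^{1/2}\nm{\na_h^2\boldsymbol w}{L^2(\om_e)}$ ($\om_e$ the patch of $e$). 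With $\iota^2\nm{\na_h^2\boldsymbol w}{L^2}\le\iota\nm{\na\boldsymbol w}{\iota,h}$ and Cauchy--Schwarz over edges,
\[
\abs{E_h(\boldsymbol w)}\le C\iota\Bigl(\sum_{e\in\mc E_h}h_e\nm{(I-Q_e)(\pa_{\boldsymbol n}\boldsymbol\sigma\boldsymbol n)}{L^2(e)}^2\Bigr)^{1/2}\nm{\na\boldsymbol w}{\iota,h}.
\]

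\emph{Step 3 (proof of~\eqref{eq:finalerr}).} Take $\boldsymbol v=\vpi_h\boldsymbol u$ and $q$ the $(r-1)$st--order Lagrange interpolant of $p$. By~\eqref{eq:localerr} and classical estimates the best--approximation term is $\le C(h^r+\iota h^{r-1})(\nm{\boldsymbol u}{H^{r+1}}+\nm{p}{H^r})$ (the $\lam^{-1/2}$--weighted contribution being lower order in $\lam$). For the consistency term, a polynomial approximation estimate on $e$, the orthogonality of $Q_e$, the multiplicative trace inequality~\eqref{eq:multrace}, and $\nm{\boldsymbol\sigma}{H^{r-1}}\le C(\nm{\boldsymbol u}{H^r}+\nm{p}{H^{r-1}})$ yield $\abs{E_h(\boldsymbol w)}\le C\iota h^{r-1}(\nm{\boldsymbol u}{H^{r+1}}+\nm{p}{H^r})\nm{\na\boldsymbol w}{\iota,h}$ --- the bubble enrichments in $P_K$ being exactly what produces the sharp $h$--power here, cf.~\cite{LiMingShi:2017}. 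Inserting both into Step~1 proves~\eqref{eq:finalerr}.

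\emph{Step 4 (proof of~\eqref{eq:finalerr2}; the main obstacle).} Now $(\boldsymbol u,p)$ has only the $\iota$--dependent regularity of Theorem~\ref{thm:reg} and its corollary, and the work is to balance it against the $h$--powers above; one splits on $\iota$ versus $h$. If $\iota\ge h$, apply~\eqref{eq:finalerr} with $r=2$ (legitimate for every $r\ge 2$ since $[\mb P_2(K)]^2\subset P_K$ and $\mb P_1\subset P_h$) together with~\eqref{eq:h2reg}: $\wnm{(\boldsymbol u-\boldsymbol u_h,p-p_h)}\le C(h^2+\iota h)(\nm{\boldsymbol u}{H^3}+\nm{p}{H^2})\le C(h^2+\iota h)\iota^{-3/2}\nm{\boldsymbol f}{L^2}\le Ch^{1/2}\nm{\boldsymbol f}{L^2}$, using $h\le 1$ and $h\le\iota$. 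If $\iota\le h$, return to Step~1: writing $\boldsymbol u=\boldsymbol u_0+(\boldsymbol u-\boldsymbol u_0)$, $p=p_0+(p-p_0)$, the interpolants $\vpi_h\boldsymbol u_0$, $\rho_h p_0$ cost $C(h+\iota)(\nm{\boldsymbol u_0}{H^2}+\nm{p_0}{H^1})\le Ch\nm{\boldsymbol f}{L^2}$ by~\eqref{eq:regelas} and $\iota\le h$, while $I_h(\boldsymbol u-\boldsymbol u_0)$ (resp.\ a Scott--Zhang interpolant of $p-p_0$), controlled by the $H^1$--stability of $I_h$, an inverse inequality --- so $\iota\nm{\na_h^2 I_h(\boldsymbol u-\boldsymbol u_0)}{L^2}\le C\iota h^{-1}\nm{\na(\boldsymbol u-\boldsymbol u_0)}{L^2}$ --- and~\eqref{eq:h1err}, contribute $\le C(\iota^{1/2}+\iota^{3/2}h^{-1})\nm{\boldsymbol f}{L^2}\le Ch^{1/2}\nm{\boldsymbol f}{L^2}$; for the consistency term one bounds $\bigl(\sum_e h_e\nm{(I-Q_e)(\pa_{\boldsymbol n}\boldsymbol\sigma\boldsymbol n)}{L^2(e)}^2\bigr)^{1/2}\le C(\nm{\na\boldsymbol\sigma}{L^2}+h^{1/2}\nm{\na\boldsymbol\sigma}{L^2}^{1/2}\nm{\na^2\boldsymbol\sigma}{L^2}^{1/2})$ via~\eqref{eq:multrace} and uses $\nm{\na\boldsymbol\sigma}{L^2}\le C\iota^{-1/2}\nm{\boldsymbol f}{L^2}$, $\nm{\na^2\boldsymbol\sigma}{L^2}\le C\iota^{-3/2}\nm{\boldsymbol f}{L^2}$ (from~\eqref{eq:h2reg}) to get $\abs{E_h(\boldsymbol w)}\le C(\iota^{1/2}+h^{1/2})\nm{\boldsymbol f}{L^2}\nm{\na\boldsymbol w}{\iota,h}\le Ch^{1/2}\nm{\boldsymbol f}{L^2}\nm{\na\boldsymbol w}{\iota,h}$. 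The genuinely delicate points are the sharp consistency estimate in Step~3 (where the bubble spaces are indispensable) and the bookkeeping in Step~4 that makes every term uniformly $O(h^{1/2}\nm{\boldsymbol f}{L^2})$ across the regime split, using only the $H^{5/2}/H^{3/2}$ (resp.\ $H^2/H^1$) regularity from \S2.
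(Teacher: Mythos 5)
Your Steps 1--3 match the paper's proof of~\eqref{eq:finalerr} essentially verbatim (error identity from the inf--sup bound, consistency functional $E_h(\boldsymbol w)=\iota^2\sum_e\int_e(\pa_{\boldsymbol n}\boldsymbol\sigma\boldsymbol n)\cdot\jump{\pa_{\boldsymbol n}\boldsymbol w}\dsx$, interpolants $\vpi_h\boldsymbol u$ and the $(r-1)$st-order Lagrange interpolant of $p$, trace inequality~\eqref{eq:trace}). Where you genuinely diverge is~\eqref{eq:finalerr2}: the paper gives a single uniform argument using the fractional regularity~\eqref{eq:h32reg}--\eqref{eq:h52reg}, the $H^1$-stable operator $I_h$ applied to $\boldsymbol u\in[H_0^2]^2$, and the Cl\'ement interpolant of $p\in H_0^1\cap L_0^2$; you instead split on $\iota\ge h$ versus $\iota\le h$, using integer-order regularity throughout. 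The $\iota\ge h$ branch (invoking the theorem ``with $r=2$'' plus~\eqref{eq:h2reg} at $k=1$) is correct and a perfectly acceptable alternative.

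The $\iota\le h$ branch has a real gap. You interpolate $\boldsymbol u_0$ and $p_0$ separately from $\boldsymbol u-\boldsymbol u_0$ and $p-p_0$, choosing $\boldsymbol v=\vpi_h\boldsymbol u_0+I_h(\boldsymbol u-\boldsymbol u_0)$ and $q=\rho_hp_0+(\text{Scott--Zhang of }p-p_0)$. But $\boldsymbol u_0\in[H_0^1\cap H^2]^2$ only, not $[H_0^2]^2$: on a boundary edge $e\subset\pa\Om$, $\int_e\pa_{\boldsymbol n}(\vpi_h\boldsymbol u_0)q\dsx=\int_e\pa_{\boldsymbol n}\boldsymbol u_0\,q\dsx\ne 0$ in general, so $\vpi_h\boldsymbol u_0\notin V_h$. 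Likewise $p_0=\lam\divop\boldsymbol u_0\notin H_0^1$ (and need not have zero mean after interpolation), so $\rho_hp_0\notin P_h$. Without showing that the boundary-DoF contributions of the two pieces cancel, or explicitly modifying the boundary DoFs and estimating the resulting perturbation (which introduces extra $h^{-1/2}$-type boundary-strip terms), $(\boldsymbol v,q)$ is not an admissible discrete pair and the inf--sup machinery of Step~1 does not apply. This is precisely the difficulty the paper's proof sidesteps by keeping the interpolation on $\boldsymbol u$ and $p$ themselves: $\boldsymbol u\in[H_0^2]^2$, $p\in H_0^1\cap L_0^2$, and the interpolation of Cl\'ement type together with the interpolated regularity estimates~\eqref{eq:h32reg}--\eqref{eq:h52reg} delivers the $O(h^{1/2})$ best-approximation bound in one stroke. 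To salvage your decomposition you would need to replicate the boundary-cutoff construction the paper uses in the (commented-out) proof of the Corollary, and verify that the additional boundary terms still total $O(h^{1/2}\nm{\boldsymbol f}{L^2})$.

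Two smaller remarks: your consistency estimate in the $\iota\le h$ branch is correct and matches the paper's, but it would be cleaner to quote~\eqref{eq:trace} directly rather than re-deriving the patch estimate from an edge Poincar\'e inequality; and when you invoke ``$\eqref{eq:finalerr}$ with $r=2$'' you should spell out that the consistency argument only uses orthogonality against $\mb P_0(e)\subset\mb P_{r-2}(e)$ and~\eqref{eq:localerr} with $k=3$, so that the loss of order is genuinely in the estimate and not in the element.
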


\begin{proof}
Let $\boldsymbol v=\boldsymbol u_h-\boldsymbol u_I$ and $q=p_h-p_I$ with $\boldsymbol u_I\in V_h$ and $p_I\in P_h$, 
for any $\boldsymbol w\in V_h$ and $z\in P_h$,
\begin{align*}
\mc{A}(\boldsymbol v,q;\boldsymbol w,z)&=\mc{A}(\boldsymbol u_h,p_h;\boldsymbol w,z)-\mc{A}(\boldsymbol u,p;\boldsymbol w,z)+\mc{A}(\boldsymbol u-\boldsymbol u_I,p-p_I;\boldsymbol w,z)\\
&=(\boldsymbol f,\boldsymbol w)-\mc{A}(\boldsymbol u,p;\boldsymbol w,z)+\mc{A}(\boldsymbol u-\boldsymbol u_I,p-p_I;\boldsymbol w,z)\\
&=\mc{A}(\boldsymbol u-\boldsymbol u_I,p-p_I;\boldsymbol w,z)-\iota^2\sum_{e\in\mc{E}_h}\int_e(\partial_{\boldsymbol n}\boldsymbol\sigma\boldsymbol n)\cdot\jump{\pa_{\boldsymbol n}\boldsymbol w}\dsx.
\end{align*}

The boundedness of $\mc{A}$ yields
\[
\abs{\mc{A}(\boldsymbol u-\boldsymbol u_I,p-p_I;\boldsymbol w,z)}
\le\max(1,2\mu)\wnm{(\boldsymbol u-\boldsymbol u_I,p-p_I)}\wnm{(\boldsymbol w,z)}.
\]

Let $\boldsymbol u_I=\varPi_h\boldsymbol u$ be the interpolation of $\boldsymbol u$ and $p_I$ be the $r-1$ order Lagrangian interpolation of $p$, respectively.  The standard interpolation error estimates in~\eqref{eq:localerr} gives
\[
\wnm{(\boldsymbol u-\boldsymbol u_I,p-p_I)}\le C(h^r+\iota h^{r-1})(\nm{\boldsymbol u}{H^{r+1}}+\nm{p}{H^r}).
\]
Note that
\[
\int_e\jump{\pa_{\boldsymbol n}\boldsymbol w}q\dsx=0\qquad\text{for all }q\in\mb{P}_{r-2}(e).
\]
A standard estimate for the consistency error functional with trace inequality~\eqref{eq:trace} gives
\begin{align*}
\iota^2\labs{\sum_{e\in\mc{E}_h}\int_e(\partial_{\boldsymbol n}\boldsymbol\sigma\boldsymbol n)\cdot\jump{\pa_{\boldsymbol n}\boldsymbol w}\dsx}&\le C\iota^2h^{r-1}(\nm{\boldsymbol u}{H^{r+1}}+\nm{p}{H^r})\nm{\na_h^2\boldsymbol w}{L^2}\\
&\le C\iota h^{r-1}(\nm{\boldsymbol u}{H^{r+1}}+\nm{p}{H^r})\nm{\na\boldsymbol w}{\iota,h}.
\end{align*}
A combination of the above three inequalities, the discrete inf-sup condition~\eqref{eq:infsup} and the triangle inequalities gives~\eqref{eq:finalerr}. 

Next, let $\boldsymbol u_I=I_h\boldsymbol u$ and let $p_I$ be the Cl\'ement interpolation~\cite{Clement:1975} of $p$, respectively. The interpolation error~\eqref{eq:globalerr} and the error estimates for the Cl\'ement interpolation give
\[
  \wnm{(\boldsymbol u-\boldsymbol u_I,p-p_I)}\le Ch^{1/2}\lr{\nm{\boldsymbol u}{H^{3/2}}+\nm{p}{H^{1/2}}+\iota(\nm{\boldsymbol u}{H^{5/2}}+\nm{p}{H^{3/2}})}\le Ch^{1/2}\nm{\boldsymbol f}{L^2},
\]
where we have used~\eqref{eq:h32reg} and~\eqref{eq:h52reg} in the last step. 

Using the trace inequality~\eqref{eq:trace}, we bound the consistency error functional as
\begin{align*}
\iota^2\labs{\sum_{e\in\mc{E}_h}\int_e(\partial_{\boldsymbol n}\boldsymbol\sigma\boldsymbol n)\cdot\jump{\pa_{\boldsymbol n}\boldsymbol w}\dsx}&\le C\iota^2h^{1/2}(\nm{\boldsymbol u}{H^2}+\nm{p}{H^1})^{1/2}(\nm{\boldsymbol u}{H^3}+\nm{p}{H^2})^{1/2}\nm{\na_h^2\boldsymbol w}{L^2}\\
&\le Ch^{1/2}\nm{\boldsymbol f}{L^2}\nm{\na\boldsymbol w}{\iota,h},
\end{align*}
where we have used~\eqref{eq:h2reg} in the last step. 

Combining these inequalities, the discrete inf-sup condition~\eqref{eq:infsup} and the triangle inequalities gives~\eqref{eq:finalerr2}. 
\end{proof}

%
\begin{coro}
There exists $C$ independent of $\iota,\lambda$ and $h$ such that
\begin{equation}\label{eq:finalerr3}
\wnm{(\boldsymbol u_0-\boldsymbol u_h,p_0-p_h)}\le C(\iota^{1/2}+h^{1/2})\nm{\boldsymbol f}{L^2},
\end{equation}
where $\boldsymbol u_0$ is the solution of~\eqref{eq:elas}, and $p_0=\lambda\divop\boldsymbol u_0$.
\end{coro}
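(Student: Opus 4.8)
The plan is to reduce \eqref{eq:finalerr3} to the bound \eqref{eq:finalerr2} and the regularity estimates of Theorem~\ref{thm:reg} by a triangle inequality. I would write
\[
\wnm{(\boldsymbol u_0-\boldsymbol u_h,p_0-p_h)}\le\wnm{(\boldsymbol u-\boldsymbol u_h,p-p_h)}+\wnm{(\boldsymbol u_0-\boldsymbol u,p_0-p)},
\]
and bound the first term on the right by $Ch^{1/2}\nm{\boldsymbol f}{L^2}$ directly from \eqref{eq:finalerr2}. It then suffices to prove $\wnm{(\boldsymbol u_0-\boldsymbol u,p_0-p)}\le C\iota^{1/2}\nm{\boldsymbol f}{L^2}$ with $C$ independent of $\iota$ and $\lam$, and the triangle inequality closes the argument.

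By definition, $\wnm{(\boldsymbol u_0-\boldsymbol u,p_0-p)}=\nm{\na(\boldsymbol u_0-\boldsymbol u)}{\iota,h}+\inm{p_0-p}+\lam^{-1/2}\inm{p_0-p}$. Since $\boldsymbol u\in[H^4(\Om)]^2$ and $\boldsymbol u_0\in[H^2(\Om)]^2$, the difference $\boldsymbol u-\boldsymbol u_0$ is globally $H^2$, so the broken seminorm coincides with the conforming one, $\nm{\na(\boldsymbol u_0-\boldsymbol u)}{\iota,h}=\nm{\na(\boldsymbol u-\boldsymbol u_0)}{\iota}$ and $\inm{p_0-p}=\nm{p-p_0}{\iota}$. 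The $L^2$-parts of these two $\iota$-weighted norms are controlled by $C\iota^{1/2}\nm{\boldsymbol f}{L^2}$ via \eqref{eq:h1err}; the parts $\iota\nm{\na^2(\boldsymbol u-\boldsymbol u_0)}{L^2}$ and $\iota\nm{\na(p-p_0)}{L^2}$ are controlled by combining \eqref{eq:h2reg} with $k=0$ (for $\boldsymbol u$ and $p$) with \eqref{eq:regelas} (for $\boldsymbol u_0$ and $p_0$), each of which yields an $\iota^{-1/2}$ that the weight $\iota$ turns into $\iota^{1/2}$. Hence $\nm{\na(\boldsymbol u-\boldsymbol u_0)}{\iota}+\nm{p-p_0}{\iota}\le C\iota^{1/2}\nm{\boldsymbol f}{L^2}$ with $C$ independent of $\lam$.

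The remaining term $\lam^{-1/2}\inm{p_0-p}$ is the only place where $\lam$-uniformity is in question, because the prefactor $\lam^{-1/2}$ is large for small $\lam$. I would exploit the identity $p-p_0=\lam\divop(\boldsymbol u-\boldsymbol u_0)$, which gives the two equal representations $\lam^{-1/2}\inm{p_0-p}=\lam^{1/2}\inm{\divop(\boldsymbol u-\boldsymbol u_0)}$, together with $\inm{\divop(\boldsymbol u-\boldsymbol u_0)}\le C\nm{\na(\boldsymbol u-\boldsymbol u_0)}{\iota}$. When $\lam\ge1$ one applies $\lam^{-1/2}\le1$ to the first representation, and when $\lam<1$ one applies $\lam^{1/2}<1$ to the second, so in either case $\lam^{-1/2}\inm{p_0-p}\le C(\nm{p-p_0}{\iota}+\nm{\na(\boldsymbol u-\boldsymbol u_0)}{\iota})\le C\iota^{1/2}\nm{\boldsymbol f}{L^2}$ with $C$ independent of $\lam$. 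Collecting the contributions yields \eqref{eq:finalerr3}. I do not anticipate a genuine obstacle: the statement is essentially a repackaging of \eqref{eq:finalerr2} and Theorem~\ref{thm:reg}, and the only point that merits a moment's attention is keeping the estimate of $\lam^{-1/2}\inm{p_0-p}$ uniform in $\lam$, which the divergence identity handles cleanly.
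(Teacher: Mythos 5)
Your proposal is correct and follows exactly the paper's route: the paper's proof is a one-liner ("combine Theorem~\ref{thm:reg}, Theorem~\ref{thm:finalerr}, and the triangle inequality"), and you have simply filled in the details of how the second term $\wnm{(\boldsymbol u_0-\boldsymbol u,p_0-p)}$ is controlled. The one point where you go beyond what the paper records explicitly is the treatment of the $\lam^{-1/2}\inm{p_0-p}$ contribution; your observation that $\lam^{-1/2}\inm{p-p_0}=\lam^{1/2}\inm{\divop(\boldsymbol u-\boldsymbol u_0)}$ lets you bound it uniformly for all $\lam>0$ (not just in the nearly incompressible regime $\lam\gg1$), which is a valid and slightly stronger rendering of what the paper tacitly assumes. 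A marginally slicker way to avoid the case split is AM--GM: since the two expressions are equal, $\lam^{-1/2}\inm{p-p_0}=\bigl(\inm{p-p_0}\,\inm{\divop(\boldsymbol u-\boldsymbol u_0)}\bigr)^{1/2}\le\tfrac12\bigl(\nm{p-p_0}{\iota}+\sqrt{2}\nm{\na(\boldsymbol u-\boldsymbol u_0)}{\iota}\bigr)$, but your argument is equally correct.
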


\begin{proof}
A combination of Theorem~\ref{thm:reg}, Theorem~\ref{thm:finalerr}, and the triangle inequality gives~\eqref{eq:finalerr3}.
\end{proof}

\section{Numerical Examples}\label{sec:numerical}
In this part, we report the numerical performance for the proposed element of the lowest-order, i.e., $r=2$. 
We test the accuracy and robustness of the element pair for the nearly incompressible materials. All examples are carried out on the nonuniform mesh. We are interested in the case when the Poisson's ratio $\nu$ is close to $0.5$ and we report the relative errors $\nm{\na(\boldsymbol u-\boldsymbol u_h)}{\iota,h}/\inm{\na\boldsymbol u}$ and the rates of convergence.

We let $\Omega=(0,1)^2$, and set Young's modulus $E=1$. The Lam\'e constants are determined by
\[
\lambda=\dfrac{E\nu}{(1+\nu)(1-2\nu)}, \quad \mu=\dfrac{E}{2(1+\nu)}.
\]
We set $\nu=0.3$ for the ordinary cases, hence $\lambda=0.5769$, and $\mu=0.3846$, and we set $\nu=0.4999$ for the nearly incompressible materials, hence $\lambda=\text{1.6664e3}$, and $\mu=0.3334$.
\subsection{The first example}
We test the performance of the element pair by solving a completely incompressible problem, which means $\divop\boldsymbol u=0$. Let $\boldsymbol u=(u_1,u_2)$ with
\[
u_1=-\sin^3(\pi x)\sin(2\pi y)\sin(\pi y),\quad u_2=\sin(2\pi x)\sin(\pi x)\sin^3(\pi y).
\]
Therefore $\divop\boldsymbol u=0$, and $\boldsymbol f$ is independent of $\lambda$.
\begin{table}[htbp]\centering\caption{Relative errors and convergence rates for the 1st example}~\label{tab:case1Guzman}\begin{tabular}{ccccc}
\hline
$\iota\backslash h$ & 1/8 & 1/16 & 1/32 & 1/64\\
\hline
\multicolumn{5}{c}{$\nu=0.3,\lambda=0.5769,\mu=0.3846$}\\
\hline
1e+00 & 2.592e-01 & 1.333e-01 & 6.519e-02 & 3.246e-02\\
rate & & 0.96 & 1.03 & 1.01\\
1e-06 & 4.252e-02 & 1.159e-02 & 2.784e-03 & 6.918e-04\\
rate & & 1.88 & 2.06 & 2.01\\
\hline
\multicolumn{5}{c}{$\nu=0.4999,\lambda=\text{1.6664e3}, \mu=0.3334$}\\
\hline
1e+00 & 2.592e-01 & 1.333e-01 & 6.519e-02 & 3.246e-02\\
rate & & 0.96 & 1.03 & 1.01\\
1e-06 & 4.252e-02 & 1.159e-02 & 2.784e-03 & 6.918e-04\\
rate & & 1.88 & 2.06 & 2.01\\
\hline
\end{tabular}\end{table}
%

In view of Table~\ref{tab:case1Guzman}, the optimal rates of convergence are observed with the completely incompressible media, which is consistent with the error bound~\eqref{eq:finalerr}.
%
\subsection{The second example}
This example is motivated by~\cite{Wihler:2006}, which admits a singular solution. The exact solution $\boldsymbol u=(u_1,u_2)$ expressed in the polar coordinates as
\[
u_1=u_{\rho}(\rho,\theta)\cos\theta-u_{\theta}(\rho,\theta)\sin\theta,\quad  
u_2=u_{\rho}(\rho,\theta)\sin\theta+u_{\theta}(\rho,\theta)\cos\theta,
\]
where
\begin{align*}
u_{\rho}&=\dfrac1{2\mu}\rho^{\alpha}\lr{-(\alpha+1)\cos((\alpha+1)\theta)+(C_2-(\alpha+1))C_1\cos((\alpha-1)\theta)},\\
u_{\theta}&=\dfrac1{2\mu}\rho^{\alpha}\lr{(\alpha+1)\sin((\alpha+1)\theta)+(C_2+\alpha-1)C_1\sin((\alpha-1)\theta)},
\end{align*}
and $\alpha=1.5, \omega=3\pi/4$, 
\[
C_1=-\dfrac{\cos((\alpha+1)\omega)}{\cos((\alpha-1)\omega)}\quad\text{and}\quad C_2=\dfrac{2(\lambda+2\mu)}{\lambda+\mu}.
\]

It may be verified that $\boldsymbol u\in [H^{5/2-\varepsilon}(\Omega)]^2$ for a small number $\varepsilon>0$. A direct calculation gives that $\boldsymbol f\equiv\boldsymbol 0$, while it is nearly incompressible because
\[
\divop\boldsymbol u=-\dfrac{3(1+\sqrt2)}{\lambda+\mu}\rho^{1/2}\cos(\theta/2).
\]
%
\begin{table}[htbp]\centering\caption{Relative errors and convergence rates for the 2nd example}~\label{tab:case2Guzman}\begin{tabular}{ccccc}
\hline
$\iota\backslash h$ & 1/8 & 1/16 & 1/32 & 1/64\\
\hline
\multicolumn{5}{c}{$\nu=0.3,\lambda=0.5769,\mu=0.3846$}\\
\hline
1e+00 & 1.062e-01 & 7.554e-02 & 5.355e-02 & 3.792e-02\\
rate & & 0.49 & 0.50 & 0.50\\
1e-06 & 2.809e-03 & 1.001e-03 & 3.549e-04 & 1.257e-04\\
rate & & 1.49 & 1.50 & 1.50\\
\hline
\multicolumn{5}{c}{$\nu=0.4999,\lambda=\text{1.6664e3}, \mu=0.3334$}\\
\hline
1e+00 & 1.149e-01 & 8.200e-02 & 5.824e-02 & 4.135e-02\\
rate & & 0.49 & 0.49 & 0.49\\
1e-06 & 4.399e-03 & 1.567e-03 & 5.558e-04 & 1.968e-04\\
rate & & 1.49 & 1.50 & 1.50\\
\hline
\end{tabular}\end{table}
%

It follows from Table~\ref{tab:case2Guzman} that the rates of convergence are sub-optimal. It is reasonable because the solution $\boldsymbol u$ is singular, which is similar to the results in~\cite{Wihler:2006}. The element pair is robust for the nearly incompressible materials.
\subsection{The third example}
In the last example, we test a problem with strong boundary layer effects. Such effects have been frequently observed in the stain elasticity model~\cites{Engel:2002, LiMingShi:2017, LiaoM:2019, LiMingWang:2021}. It is shown that the numerical solution converges to the solution of~\eqref{eq:elas} when $\iota\ll h$. 

When $\iota\to 0$, the boundary value problem~\eqref{eq:sgbvp} reduces to~\eqref{eq:elas}. Let $\boldsymbol u_0=(u_1^0,u_2^0)$ with
\[
u_1^0=-\sin^2(\pi x)\sin(2\pi y), \quad u_2^0=\sin(2\pi x)\sin^2(\pi y)
\]
be the solution of problem~\eqref{eq:elas}. The source term $\boldsymbol f$ is computed from~\eqref{eq:elas}. A direct calculation gives that $\divop\boldsymbol u_0=0$, and $\boldsymbol f$ is independent of $\lambda$. The exact solution $\boldsymbol u$ for~\eqref{eq:sgbvp} is unknown, while it has strong boundary layer effects. In this case, we take $\iota\ll h$, and report the relative error $\nm{\na(\boldsymbol u_0-\boldsymbol u_h)}{\iota,h}/\inm{\na\boldsymbol u_0}$.
\begin{table}[htbp]\centering\caption{Relative errors and convergence rates for the 3rd example}~\label{tab:case3Guzman}\begin{tabular}{ccccc}
\hline
$\iota\backslash h$ & 1/8 & 1/16 & 1/32 & 1/64\\
\hline
\multicolumn{5}{c}{$\nu=0.3,\lambda=0.5769,\mu=0.3846$}\\
\hline
1e-04 & 1.311e-01 & 8.966e-02 & 6.299e-02 & 4.476e-02\\
rate & & 0.55 & 0.51 & 0.49\\
1e-06 & 1.311e-01 & 8.960e-02 & 6.283e-02 & 4.432e-02\\
rate & & 0.55 & 0.51 & 0.50\\
\hline
\multicolumn{5}{c}{$\nu=0.4999,\lambda=\text{1.6664e3}, \mu=0.3334$}\\
\hline
1e-04 & 1.312e-01 & 8.968e-02 & 6.300e-02 & 4.476e-02\\
rate & & 0.55 & 0.51 & 0.49\\
1e-06 & 1.312e-01 & 8.963e-02 & 6.284e-02 & 4.432e-02\\
rate & & 0.55 & 0.51 & 0.50\\
\hline
\end{tabular}\end{table}
%

It follows from Table~\ref{tab:case3Guzman} that the rate of convergence for the element pair changes to $1/2$ because of the boundary layer effects, which is consistent with the theoretical result. The element is still robust when the solution has strong boundary layer effects in the nearly incompressible limit. 
\bibliographystyle{amsplain}
\bibliography{sg-9}

@article{Wei:2006,
  author = {Wei, Y. G.},
  title = {A new finite element method for strain gradient theories and applications to
               fracture analyses},
  journal = {European Journal of Mechanics A/Solids},
  volume = {25},
  year = {2006},
  pages = {897--913}
 }

@incollection{FleckHutchinson:1997,
  author = {Fleck, N. A. and Hutchinson, J. W.},
  title = {Strain gradient plasticity},
  booktitle = {Advances in {A}pplied {M}echanics, Vol. 33, {Academic Press}},
  pages = {295--361},
  year = {1997}
 }

@article{Koiter:19641,
  author = {Koiter, W. T.},
  title = {Couple-stresses in the theory of elasticity $\mr{\uppercase\expandafter{\romannumeral1}}$},
  journal = {Nederl. Akad. Wetensch. Proc. Ser. B},
  volume = {67},
  pages= {17--29},
  year = {1964}
 }

@article{Mindlin:1964,
  author = {Mindlin, R. D.},
  title = {Micro-structure in linear elasticity},
  journal = {Arch. Rational Mech. Anal.},
  volume = {16},
  pages = {51--78},
  year={1964}
 }

@article{Tai:2001,
  author = {Nilssen, T. K. and Tai, X.-C. and Winther, R.},
  title = {A robust nonconforming {H}$^2$-element},
  journal = {Math. Comp.},
  year = {2001},
  volume = {70},
  pages = {489--505}
 }

@article{Altan:1992,
   author = {Altan, S. B. and Aifantis, E. C.},
   title = {On the structure of the mode \uppercase\expandafter{\romannumeral3} crack-tip in gradient elasticity},
   journal = {Scripta Metal. Mater.},
   volume = {26},
   year = {1992},
   pages = {319--324}
   }

@article{zervos:20091,
   author = {Zervos, A. and Papanicolopulos, S.-A. and Vardoulakis, I.},
   title = {Two finite element discretizations for gradient elasticity},
   journal = {J. Eng. Mech.-ASCE},
   volume = {135},
   year = {2009},
   pages = {203--213}
   }

@article{zervos:20092,
   author = {Papanicolopulos, S.-A. and Zervos, A. and Vardoulakis, I.},
   title = {A three-dimensional {C}$^{\,1}$ finite element for gradient elasticity},
   journal = {Int. J. Numer. Meth. Engng.},
   volume = {135},
   year = {2009},
   pages = {1396--1415}
   }

@article{Askes:2002,
   author = {Askes, H. and Aifantis, E. C.},
   title = {Numerical modeling of size effects with gradient elasticity-{F}ormulation, meshless discretization and examples},
   journal = {Int. J. Fract.},
   volume = {117},
   year = {2002},
   pages = {347--358}
   }

@article{Brenner:2011,
   author = {Brenner, S. C. and Neilan, M.},
   title = {A {C}$^{\,0}$ Interior Penalty Method for a Fourth Order Elliptic Singular Perturbation Problem},
   journal = {SIAM J. Numer. Anal.},
   volume = {49},
   year = {2011},
   pages = {869--892}
   }

@book{AdamsFournier:2003,
 author = {Adams, R. A. and Fournier, J. J. F.},
 title = {Sobolev {S}paces},
 publisher = {Academic Press},
 edition = {2nd},
 year = {2003}
 }

@article{Korn:1908,
  author = {Korn, A.},
  title = {Solution g\'en\'erale du probl\`eme d'\'equilibre dans la th\'eorie de l'\'elasticit\'e dans le cas
           o\`u les efforts sont donn\'es \`a la surface},
  journal = {Ann. Fac. Sci. Toulouse Sci. Math. Sci. Phys. (2)},
  volume = {10},
  year = {1908},
  pages = {165--269}
  }

@article{Korn:1909,
  author = {Korn, A.},
  title = {\"{U}ber einige {u}ngleichungen, welche in der {T}heorie der elastischen und elektrischen {S}chwingungen
            eine {R}olle spielen},
  journal = {Bull. Intern. Cracov. Akad. Umiejetnosci (Classe Sci. Math. Nat.)},
  year = {1909},
  pages = {706--724}
  }

@book{Ciarlet:1978,
  author ={Ciarlet, P. G.},
  title ={The {F}inite {E}lement {M}ethod for {E}lliptic {P}roblems},
  publisher ={North-Holland, Amsterdam},
  year ={1978}
  }

@article{MardalTaiWinther:2002,
 author = {Mardal, K. A. and Tai, X.-C. and Winther, R.},
 title = {A robust finite element method for {D}arcy-{S}tokes flow},
 journal = {SIAM J. Numer. Anal.},
 Volume = {40},
 year = {2002},
 pages = {1605--1631}
 }

@article{GuzmanLeykekhmanNeilan:2012,
   author = {Guzm\'an, J. and Leykekhman, D. and Neilan, M.},
   title = {A family of non-conforming elements and the analysis of {N}itsche's method
            for a singularly perturbed fourth order problem},
   journal = {Calcolo},
   year = {2012},
   volume={49},
   pages = {95--125}
   }

@article{CiarletRaviart:1972,
   author = {Ciarlet, P. G. and Raviart, P.-A.},
   title = {General {L}agrange and {H}ermite interpolation in {R}$^n$ with applications to finite
            element methods},
   journal = {Arch. Rational Mech. Anal.},
   year = {1972},
   volume={46},
   pages = {177--199}
   }

@article{Semper:1992,
   author = {Semper, B.},
   title = {Conforming finite element approximations for a fourth-order singular perturbation problem},
   journal = {SIAM J. Numer. Anal.},
   year = {1992},
   volume={29},
   pages = {1043--1058}
   }

@article{Semper:1994,
   author = {Semper, B.},
   title = {Locking in finite-element approximations to long thin extensible beams},
   journal = {IMA J. Numer. Anal.},
   year = {1994},
   volume={14},
   pages = {97--109}
   }

@article{Engel:2002,
   author = {Engel, G. and Garikipati, K. and Hughes, T. J. R. and Larsson, M. G. and Mazzei, L. and Taylor, R. L.},
   title = {Continuous/discontinuous finite element approximations of fourth-order elliptic problems in
            structural and continuum mechanics with applications to thin beams and plates, and strain gradient elasticity},
   journal = {Comput. Methods Appl. Mech. Engrg.},
   year = {2002},
   volume={191},
   pages = {3669--3750}
   }

@article{LiMingShi:2017,
  author = {Li, H. L. and Ming, P. B. and Shi, Z.-C.},
  title = {Two robust nonconforming {H$^2-$}elements for linear strain gradient elasticity},
  journal ={Numer. Math.},
  year = {2017},
  volume = {137},
  pages = {691--711}
  }

@book{BrennerScott:2008,
   author = {Brenner, S. C. and Scott, L. R.},
   title = {The {M}athematical {T}heory of {F}inite {E}lement {M}ethods},
   publisher = {Springer Science $+$ Buiness Media LLC},
   edition = {3rd},
   year = {2008}
   }

@article{Hlavacek:19692,
      author = {Hlav\'{a}\v{c}ek, I. and Hlav\'{a}\v{c}ek, M.},
      title = {{On the existence and uniqueness of solution and some variational principles in linear
               theories of elasticity with couple-stresses. II: Mindlin's elasticity with microstructure and the
               first strain-gradient theory}},
      Journal = {Aplikace Matematiky},
      volume = {14},
      year = {1969},
      pages = {411--427}
      }

@article{Fisher:2010,
       author = {Fischer, P. and Mergheim, J. and Steinmann, P.},
       title = {On the {C}$^1$ continuous discretization of non-linear
                gradient elasticity: {a} comparison of {NEM} and {FEM} based on {B}ernstein-{B}\'ezier patches},
       journal = {Int. J. Numer. Meth. Engng.},
       volume = {82},
       year = {2010},
       pages = {1282--1307}
       }

@article{Laurencot:2017,
        author = {Laurencot, Ph. and Walker, Ch.},
        title = {Some singular equations modeling {MEMS}},
        journal = {Bull. Amer. Math. Soc.},
        year = {2017},
        volume = {54},
        pages = {437--479}
        }

@book{Szego:1975,
    author = {Szeg\"o, G.},
    title = {{Orthogonal Polynomials}},
    publisher = {AMS, Providence, Rhode Island},
    edition = {4th},
    note ={{AMS Coll. Publ. Vol. XXIII}},
    year = {1975}
    }

@incollection{BFS:1965,
        author = {Bogner, F. A. and Fox, R. L. and Schmit, L. A.},
        title = {The generation of interelement compatible stiffness and mass matrices by the use of
                 interpolation formulas},
        booktitle = {{Proceedings of the Conferences on the Matrix Methods in Structural Mechanics}},
        publisher = {{Wright Patterson A.F.B. Ohio}},
        year = {1965}
        }

@article{BacutaBramble:2003,
    author = {Bacuta, C. and Bramble, J. H.},
     title = {Regularity estimates for solutions of the equations of linear elasticity in convex plane polygonal domains},
     journal = {Z. Angew. Math. Phys.},
     volume = {54},
     year = {2003},
     pages = {874--878}
     }

@article{Vog:1983,
     author = {Vogelius, M.},
     title = {An Analysis of the p-Version of the Finite Element Method for Nearly Incompressible Materials Uniformly Valid, Optimal     
                Error Estimates},
     journal = {Numer. Math.},
     volume = {41},
     year = {1983},
     pages = {39--53}
     }

@article{LiaoM:2019,
        author = {Y. L. Liao and P. B. Ming},
        title = {A family of nonconforming rectangular elements for strain gradient elasticity},
        journal = {Adv. Appl. Math. Mech.},
        year = {2019},
        volume = {11},
        number = {6},
        pages = {1263-1286}
        }

@article{Agmon:1964,
    author = {Agmon, S. and Douglis, L. and Nirenberg, A.},
    title = {Estimates near the boundary for solutions of elliptic partial differential equations satisfying general boundary
               conditions {II}},
     journal = {Comm. Pure Appl. Math.},
     volume = {17},
     year = {1964},
     pages = {35--92}
     }

@article{Wihler:2006,
    author = {Wihler, T. P.},
    title = {Locking-free adaptive discontinuous {G}alerkin {FEM} for linear elasticity problems},
    journal = {Math. Comp.},
    year = {2006},
    volume = {75},
    pages = {1087-1102}
}

@article{Winther:2013,
    author = {Mardal, K.-A. and Sch\"oberl, J. and Winther, R.},
    title = {{A uniformly stable Fortin operator for the Taylor-Hood element}},
    journal = {Numer. Math.},
    year = {2013},
    volume = {123},
    pages = {537--551}
}

@book{Dunkl:2014, 
    place={Cambridge}, 
    edition={2nd}, 
    series={Encyclopedia of Mathematics and its Applications}, 
    title={{Orthogonal Polynomials of Several Variables}}, 
    publisher={Cambridge University Press}, 
    author={Dunkl, C. F. and Xu, Y.}, 
    year={2014}
}

@article{Braess:1996,
   author = {Braess, D.},
   title = {Stability of saddle-point problems with penalty},
   journal = {{RAIRO Anal. Num\'er.}},
   volume = {30},
   year = {1996},
   pages = {731--742}
   }

@article{ShuKingFleck:1999,
  author = {Shu, J. Y. and King, W. E. and Fleck, N. A.},
  title = {Finite elements for materials with strain gradient effects},
  journal = {Internat. J. Numer. Meth. Engreg.,},
  year = {1999},
  volume = {44},
  pages = {373--391}
  }

@article{Aifantis:1999,
     author = {Aifantis, E. C.},
     title = {Strain gradient interpretation of size effects},
     journal = {Int. J. Fract.},
     volume = {95},
     pages = {299-314},
     year = {1999}
     }

@article{Hermann:1965,
      author = {Hermann, L. R.},
      title = {Elasticity equations for incompressible and nearly incompressible materials by a variational theorem},
      journal = {{AIAA, J.}},
      year = {1965},
      volume = {3},
      pages = {1896--1900}
      }

@article{Simo:1985,
        author = {Simo, J. and Taylor, R. L. and Pister, K.},
        title = {Variational and projection methods for the volume constraint in finite deformation elasto-plasticity},
        journal = {Comput. Methods Appl. Mech. Engrg.},
        volume = {51},
        year = {1985},
        pages = {177--208}
        }

@article{Babuska:1992,
        author = {Babu\v{s}ka, I. and Suri, M.},
        title = {Locking effects in the finite element approximation of elasticity problems},
        journal = {Numer. Math.},
        volume = {62},
        year = {1992},
        pages = {439--463}
        }

@article{BraessMing:2005,
          author = {Braess, D. and Ming, P. B.},
          title = {A finite element method for nearly incompressible elasticity problems},
          journal = {Math. Comp.},
          volume = {74},
          year = {2005},
          pages = {25--52}
          }

@article{Fortin:1977,
    author = {Fortin, M.},
     title = {An analysis of the convergence of mixed finite element methods},
     journal = {{RAIRO Anal. Num\'er.}},
     volume = {11},
     pages = {341--354},
     year = {1977}
     }

@misc{Fisher:2011,
       author = {Fischer, P.},
       title = {{C$^{\,1}$ Continuous Methods in Computational Gradient Elasticity}},
       note = {Der Technischen Fakult\"at der Universit\"at Erlangen-N\"urnberg},
       year = {2011}
       }

@article{Rud:2014,
         author = {Rudraraju, S. and Van der Ven, A. and Garikipati, K.},
         title = {Three-dimensional isogeometric solutions to general boundary value problems of {T}oupin's gradient elasticity theory
                     at finite strains},
          journal = {Comput. Methods Appl. Mech. Engrg.},
          year = {2014},
          volume = {278},
          pages = {705--728}
          }

@article{Zybell:2012,
             author = {Zybell, L. and M\"uhlich, U. and Kuna, M. and Zhang, Z. L.},
             title = {A three-dimensional finite element for gradient elasticity based on mixed-type formulation},
             journal = {Comput. Mater. Sci.},
             volume = {52},
             year = {2012},
             pages = {268--273}
             }

@article{Auricchio:2013,
               author = {Auricchio, F. and Beir\~ao da Veiga, L. and Lovadina, C. and Reali, A. and Taylor, R. L. and Wriggers, P.},
               title = {Approximation of incompressible large deformation elastic problems: some unresolved issues},
               journal = {Comput. Mech.},
               volume = {52},
               year = {2013},
               pages = {1153--1167}
               }

@book{BBF:2013,
                 author = {Boffi, D. and Brezzi, F. and Fortin, M.},
                 title = {{Mixed Finite Element Methods and Applications}},
                 publisher = {Springer-Verlag, Berlin Heidelberg},
                 edition = {2nd},
                 year = {2013}
                 }

@article{Aravas:2002,
author = {Amanatidou, E. and Aravas, N.},
year = {2002},
pages = {1723--1751},
title = {Mixed finite element formulations of strain-gradient elasticity problems},
volume = {191},
journal = {Comput. Methods Appl. Mech. Engrg.}
}

@article{Phunpeng:2015,
title = {Mixed finite element formulations for strain-gradient elasticity problems using the {FEniCS} environment},
journal = {Finite Elem. Anal. Des.},
volume = {96},
pages = {23--40},
year = {2015},
author = {V. Phunpeng and P. M. Baiz}
}

@book{Galdi:2011,
     author = {Galdi, G. P.},
     title =  {{An Introduction to the Mathematical Theory of the Navier-Stokes Equations: Steady-State Problems}},
     publisher = {{Springer Science+Business Media, LLC}},
     year = {2011}
     }

@article{LiMingWang:2021,
       author = {Li, H. L. and Ming, P. B. and Wang, H. Y.},
       title = {H$^2-${K}orn’s inequality and the nonconforming elements for the strain gradient elastic model},
       journal = {J. Sci. Comput.},
       volume = {88},
       pages = {78-100},
       year = {2021},
       note = {https://doi.org/10.1007/s10915-021-01597-7}
       }

@article{Hood:1973,
      author={Taylor, C. and Hood, P.},
      title = {A numerical solution of the {Navier–Stokes} equations using the finite element technique},
      journal = {Comput. \& Fluids},
      year = {1973},
      volume = {1},
      pages = {73-100}
      }

@misc{Tian:2021,
        author = {Tian, S. D.},
        title = {{New Nonconforming Finite Element Methods for Fourth Order Elliptic Problems}},
        note = {{Ph.D. Thesis, Peking University}},
        year = {2021}
        }

@article{Schuss:1976,
           author = {Schuss, Z.},
           title = {SINGULAR PERTURBATIONS AND THE TRANSITION FROM THIN PLATE TO MEMBRANE},
           journal = {Proc. Amer. Math. Soc.},
           volume = {58},
           pages = {139-147},
           year = {1976}
           }

@incollection{John:1973,
             author = {John, F.},
             title = {The transition from thin plate to membrane in the case of a plate under uniform tension},
             booktitle = {{Continuum mechanics and related problems of analysis, Izdat. ``Nauka'', Moscow}}, 
             pages = {193-201},
             year = {1972}
             }

@article{Brenner:2011b,
              author = {Brenner, S. C. and Gudi, T. and Neilan, M. and Sung, L.-Y.},
              title = {{\it C}$^{\,0}$ PENALTY METHODS FOR THE FULLY NONLINEAR {M}ONGE-{A}MP\`ERE EQUATION},
              journal = {Math. Comp.},
              volume = {80},
              pages = {1979-1995},
              year = {2011}
              }

@article{Feng:2014,
              author = {Feng, X. B. and Neilan, M.},
              title = {Convergence of a fourth-order singular perturbation of the $n$-dimensional radially symmetric {Monge–Amp\`ere} equation},
              journal = {Appl. Anal.},
              volume = {93}, 
              pages = {1626-1646},
              year = {2014}
              }

@article{Brenner:1992,
                author = {Brenner, S. C. and Sung, L.-Y.},
                title = {LINEAR FINITE ELEMENT METHODS FOR PLANAR LINEAR ELASTICITY},
                journal = {Math. Comp.},
                volume = {59},
                pages = {321-338},
                year = {1992}
                }

@article{Khakalo:2020,
  author = {Khakalo, S. and Niiranen, J.},
  title = {Anisotropic strain gradient thermoelasticity for cellular structures: {P}late models, homogenization and isogeometric analysis},
  journal = {J. Mech. Phys. Solids},
  volume = {134},
  pages = {103728},
  year = {2020}
}

@article{Klassen:2011,
  author = {Fischer, P. and Klassen, M. and Mergheim, J. and Steinmann, P. and M\"uller, R.},
  title = {Isogeometric analysis of {2D} gradient elasticity},
  journal = {Comput. Mech.},
  volume = {47},
  pages = {325–334},
  year = {2011}
}

@article{Niiranen:2016,
 author = {Niiranen, J. and Khakalo, S. and Balobanov, V. and Niemi, A. H.},
 title = {Variational formulation and isogeometric analysis for fourth-order boundary value problems of 
             gradient-elastic bar and plane strain/stress problems},
 journal = {Comput. Methods Appl. Mech. Engrg.},
 volume = {308},
 pages = {182–211},
 year = {2016}
}

@article{Danchin:2013,
  author = {Danchin, R. and Mucha, P. B.},
  title = {Divergence},
  journal = {Discrete Contin. Dyn. Syst. Ser. S},
  volume = {6},
  pages = {1163-1172},
  year = {2013}
}

@article{Clement:1975,
  author = {Cl\'ement, Ph.},
  title = {Approximation by finite element functions using local regularization},
  journal = {RAIRO Anal. Num\'er.},
  volume = {9},
  pages = {77-84},
  year = {1975}
}

@article{Costabel:2010,
  author = {Costabel, M. and McIntosh, A.},
  title = {{On Bogovski\v{\i} and regularized Poincar\'e integral operators for de Rham complexes on Lipschitz domains}},
  journal = {Math. Z.},
  volume = {265},
  number = {2},
  pages = {297-320},
  year = {2010}
}

@article{Arnold:1984,
  author = {Arnold, D. N. and Brezzi, F. and Fortin, M.},
  title = {A stable finite element for the {S}tokes equations},
  journal = {Calcolo},
  volume = {21},
  pages = {337–344},
  year = {1984}
}

@article{Bernardi:1985,
  author = {Bernardi, C. and Genevi\`{e}ve, R.},
  title = {Analysis of some finite elements for the Stokes problems},
  journal = {Math. Comp.},
  volume = {44},
  number = {169},
  pages = {71-79},
  year = {1985}
}

@article{Zhang:2012,
  author = {Wang, M. and Zu, P. H. and Zhang, S.},
  title = {High accuracy nonconforming finite elements for fourth order problems},
  journal = {Sci. China Math.},
  volume= {55},
  number = {10},
  pages = {2183-2192},
  year = {2012}
}
\end{document}